\newenvironment{ppb}[1]
{\ \!\!\!\!\!\!\!\!\!\!\!\!\!\!\!\!\!\!\!\!\!\!\!\!\!\!\!\!\!\!\!\!\!\!\!\!\!\!\!\! {\bf PPB-----------------------------------------------------------------------------PPB}\newline \tiny {#1}
\  \newline\normalsize\phantom{f}\!\!\!\!\!\!\!\!\!\!\!\!\!\!\!\!\!\!\!\!\!\!\!\!\!\!\!\!\!\!\!\!\!\!\!\!\!\!\!\! {\bf 
PPB-----------------------------------------------------------------------------PPB}\newline}{}
\long\def\pb #1*/{}
\def\reE@DeclareMathSymbol#1#2#3#4{%
    \let#1=\undefined
    \DeclareMathSymbol{#1}{#2}{#3}{#4}}
\DeclareSymbolFont{symbolsC}{U}{txsyc}{m}{n}
\reE@DeclareMathSymbol{\strictiff}{\mathrel}{symbolsC}{76}
\newcommand\beq{\begin{equation}}
\newcommand\eeq{\end{equation}}
\newcommand\bal{\begin{align*}}
\newcommand\eal{\end{align*}}   
\newcommand\bmx{\left(\begin{matrix}}
\newcommand\emx{\end{matrix}\right)}
\newcommand\bsmx{\left(\begin{smallmatrix}}
\newcommand\esmx{\end{smallmatrix}\right)}
\newcommand\bmxnp{\begin{matrix}}
\newcommand\emxnp{\end{matrix}}
\newcommand\bsmxnp{\begin{smallmatrix}}
\newcommand\esmxnp{\end{smallmatrix}}
\DeclareMathSymbol{\widehatsym}{\mathord}{largesymbols}{"62}
\newcommand\lowerwidehatsym{%
  \text{\smash{\raisebox{-1.3ex}{%
    $\widehatsym$}}}}
\newcommand\fixwidehat[1]{%
  \mathchoice
    {\accentset{\displaystyle\lowerwidehatsym}{#1}}
    {\accentset{\textstyle\lowerwidehatsym}{#1}}
    {\accentset{\scriptstyle\lowerwidehatsym}{#1}}
    {\accentset{\scriptscriptstyle\lowerwidehatsym}{#1}}
}
\newcommand{\wh}{\fixwidehat}
\renewcommand{\d}{\partial}
\providecommand{\cir}{} 
\renewcommand{\cir}[1]{\< #1 \>}   
\newcommand{\bSi}{{\bf \Si}}
\newcommand{\col}{{\mkern2mu:\mkern2mu}}
\newcommand{\onto}{\twoheadrightarrow}
\newcommand{\into}{\hookrightarrow}
\newcommand{\st}{\ \bigl\vert\ }
\providecommand{\abs}[1]{\lvert#1\rvert}
\providecommand{\levels}{\text{\rm Levels}}
\providecommand{\Levels}{\text{\rm Levels}}
\providecommand{\lcm}{\text{\rm lcm}}
\providecommand{\from}{\leftarrow}
\providecommand{\floor}[1]{\lfloor#1\rfloor}
\providecommand{\<}{\langle}
\renewcommand{\>}{\rangle}
\def\part#1{\frac{\partial\phantom{q}}{\partial#1}}
\newcommand {\flb}{\lbrack\!\lbrack}
\newcommand {\frb}{\rbrack\!\rbrack}
\newcommand {\flp}{(\!(}
\newcommand {\frp}{)\!)}
\DeclareFontFamily{U}{wncy}{}
\DeclareFontShape{U}{wncy}{m}{n}{<->wncyr10}{}
\DeclareSymbolFont{mcy}{U}{wncy}{m}{n}
\DeclareMathSymbol{\Sh}{\mathord}{mcy}{"58} 
\newcommand{\MB}{\mathcal{M}_{\text{\rm B}}}
\newcommand{\uMB}{\underline{\mathcal{M}}_{\text{\rm B}}}
\renewcommand{\MR}{\mathop{\rm MR}}
\newcommand{\Sect}{\text{\rm Sect}}
\newcommand{\Sym}{{\rm Sym}}
\newcommand{\ram}{\mathop{\rm Ram}} 
\newcommand{\Ram}{\mathop{\rm Ram}} 
\newcommand{\Irr}{\text{\rm Irr}}
\newcommand{\slope}{\mathop{\rm slope}}
\DeclareMathOperator{\pr}{pr}
\newcommand{\Map}{{\mathop{\rm Map}}}
\newcommand{\Tr}{{\mathop{\rm Tr}}}
\DeclareMathOperator{\Hom}{Hom}         
\DeclareMathOperator{\Aut}{\mathop{\rm Aut}}
\newcommand{\SL}{{\mathop{\rm SL}}}
\newcommand{\GL}{{\mathop{\rm GL}}}
\DeclareMathOperator{\End}{End}
\DeclareMathOperator{\Katz}{Katz}
\newcommand{\diag}{{\mathop{\rm diag}}}
\newcommand{\ba}{{\bf a}}
\newcommand{\bd}{{\bf d}}
\DeclareSymbolFont{bbold}{U}{bbold}{m}{n}
\DeclareSymbolFontAlphabet{\mathbbold}{bbold}
\newcommand{\IA}{\mathbb{A}}
\newcommand{\IB}{\mathbb{B}}
\newcommand{\IC}{\mathbb{C}}
\newcommand{\II}{\mathbb{I}}
\newcommand{\IL}{\mathbb{L}}
\newcommand{\IN}{\mathbb{N}}
\newcommand{\IP}{\mathbb{P}}                                     
\newcommand{\IQ}{\mathbb{Q}}                           
\newcommand{\IR}{\mathbb{R}}
\newcommand{\IV}{\mathbb{V}}
\newcommand{\IY}{\mathbb{Y}}
\newcommand{\IZ}{\mathbb{Z}}
\newcommand{\cB}{\mathcal{B}}
\newcommand{\cF}{\mathcal{F}}
\newcommand{\cI}{\mathcal{I}}
\providecommand{\cL}{}
\renewcommand{\cL}{\mathcal{L}}
\newcommand{\cS}{\mathcal{S}}
\newcommand{\cT}{\mathcal{T}}
\newcommand{\al}{\alpha}
\newcommand{\be}{\beta}
\newcommand{\ga}{\gamma}
\newcommand {\eps}{\varepsilon}
\newcommand{\Ga}{\Gamma}
\newcommand{\La}{\Lambda}
\newcommand{\si}{\sigma}
\newcommand{\Si}{\Sigma}
\newcommand{\Th}{\Theta}
\newcommand{\ze}{\zeta}
 \newlength{\typesize}
\newlength{\vvoff}
\newlength{\hhoff}
\newcommand{\pf}{\begin{bpf}}
\newcommand{\pfms}{\begin{bpfms}}
\newcommand{\epf}{\end{bpf}\hfill$\square$\\}           
\newcommand{\epfms}{\end{bpfms}\hfill$\square$\\}       
\newcommand{\idea}{\begin{bidea}}
\newcommand{\eidea}{\end{bidea}\hfill$\square$\\}           
\newcommand{\sk}{\begin{bsk}}    
\newcommand{\esk}{\end{bsk}\hfill$\square$\\}           
\newcommand{\sketch}{\begin{bsketch}}
\newcommand{\esketch}{\end{bsketch}\hfill$\square$\\}
\theoremstyle{plain}  
\newtheorem{hypo}{Hypothesis}[section] 
\newtheorem{thm}[hypo]{Theorem}
\newtheorem{prop}[hypo]{Proposition}
\newtheorem{cor}[hypo]{Corollary}
\newtheorem{lem}[hypo]{Lemma}
\newtheorem {defn}[hypo]{Definition}
\newtheorem {eg}[hypo]{Example}
\theoremstyle{remark}
\theoremstyle{remark}\newtheorem{remark}[hypo]{Remark}
\DeclareMathAlphabet{\mathbbmsl}{U}{bbm}{m}{sl}
\numberwithin{equation}{section}
\providecommand{\abs}[1]{\lvert#1\rvert}
\DeclareMathOperator{\Rank}{Rank}  %
 \DeclareMathOperator{\Inc}{Inc} %
 \DeclareMathOperator{\Ch}{Ch}  %
 \DeclareMathOperator{\succr}{succ}  %
 \DeclareMathOperator{\precr}{prec}  %
 \newcommand{\N}{\mathbb{N}}
 \newcommand{\Z}{\mathbb{Z}}
 \newcommand{\Q}{\mathbb{Q}}
 \newcommand{\C}{\mathbb{C}}  %
\DeclareFontFamily{U}{mathx}{\hyphenchar\font45}
\DeclareFontShape{U}{mathx}{m}{n}{<-> mathx10}{}
\DeclareSymbolFont{mathx}{U}{mathx}{m}{n}
\DeclareMathAccent{\wb}{0}{mathx}{"73}  %
\newcommand{\nonpure}{full} %
\newcommand{\Nonpure}{Full} %
\newcommand{\authorised}{inconsequential} %
\newcommand{\mit}{pointed irregular type} %
\newcommand{\mits}{pointed irregular types} %
\newcommand{\pureconfig}{\mathbf{B}}
\newcommand{\fullconfig}{\wb {\mathbf{B}}}
\newcommand{\purelwmcg}{\Ga}
\newcommand{\fulllwmcg}{\wb \Ga}
 \newcommand{\efs}{exponential factors}
\newcommand{\Ical}{\mathcal{I}}
\begin{document}

\title[Twisted local wild mapping class groups]{Twisted local wild mapping class groups:\\ configuration spaces, fission trees\\ and complex braids}

\author{Philip Boalch, Jean Dou\c{c}ot and  Gabriele Rembado}

\thanks{J.~D. is funded by FCiências.ID. G.~R. was supported by the Deutsche Forschungsgemeinschaft (DFG, German Research Foundation) under Germany's Excellence Strategy - GZ 2047/1, Projekt-ID 390685813, and is now funded by the EU under grant n. 101108575 (HORIZON-MSCA project~\href{https://ec.europa.eu/info/funding-tenders/opportunities/portal/screen/how-to-participate/org-details/999999999/project/101108575/program/43108390/details}{QuantMod})}

\begin{abstract}
Following the completion of the algebraic construction of the Poisson wild character varieties (B.--Yamakawa 2015)
one can consider their natural deformations, generalising both the mapping class group  actions on
the usual (tame) character varieties, and the G-braid groups already known to occur in the wild/irregular setting.
Here we study these {\em wild mapping class groups} in the case of arbitrary formal structure in type A.   As we will recall, this story is most naturally phrased in terms of admissible deformations of 
wild Riemann surfaces. 
The main results are: 1) the construction of configuration spaces containing all possible local deformations, 2) the definition of a combinatorial object, the ``fission forest'', of any wild Riemann surface and a proof that it gives a sharp parameterisation of all the admissible deformation classes.   As an application of 1), by considering basic examples, we show that the braid groups of all the complex reflection groups known as the generalised symmetric groups appear as wild mapping class groups.   As an application of 2), we compute the dimensions of all the (global) moduli spaces of type A wild Riemann surfaces (in fixed admissible deformation classes), a generalisation of the famous ``Riemann's count'' of the dimensions of the moduli spaces of compact Riemann surfaces.
\end{abstract}

\maketitle

 %


%

\section{Introduction}

The appearance of braid groups in 2d gauge theory is intimately related to the theory of isomonodromic deformations of linear connections and in turn to the Riemann--Hilbert problem. 
The simplest picture is to fix a Lie group $G$ such as $\GL_n(\IC)$ and a Riemann surface $\Si$ and to consider the arrow
$$\Si \quad \longmapsto \quad \MB := \Hom(\pi_1(\Si),G)/G$$
attaching the $G$-character variety (or Betti moduli space) $\MB$ to the surface.
This works well in families and one obtains lots of flat nonlinear connections in this way: 
If $\underline{\Si}\to \IB$ is a family of smooth Riemann surfaces over a base $\IB$ then the character varieties of the
fibres assemble into a fibre bundle
$$\uMB \quad \longrightarrow \quad \IB$$
over the same base $\IB$, and moreover this bundle  comes equipped with a natural complete 


\tableofcontents

\noindent
flat Ehresmann connection (or in other terms it is a ``local system of varieties'').
Integrating this nonlinear connection around loops in $\IB$ yields an action of the fundamental group
of the base on any fibre $\MB(b)$, i.e. for any basepoint $b\in \IB$ there is a homomorphism
\beq\label{eq: monod action}
\pi_1(\IB,b) \longrightarrow \Aut_{\text{Poisson}}(\MB(b)).
\eeq
For example taking $\IB$ to be the configuration space of $m$-tuples of points of $\IC$ leads to the usual $m$-string braid group action on the genus zero tame character varieties (Hurwitz action), or taking $\IB$ to be the Riemann moduli space of curves yields the  natural action of the mapping class group on the character varieties of compact Riemann surfaces.

This story has a vast generalisation involving new braidings, obtained by considering the monodromy data of irregular singular meromorphic connections, and their isomonodromic (monodromy preserving) deformations. To see this generalisation, 
suppose  $\Si$ above is actually a smooth complex algebraic curve. Then, under the Riemann--Hilbert correspondence, $\MB$ parameterises {\em regular singular} 
(or {\em tame}) algebraic connections on vector bundles on $\Si$.
The simplest nontrivial 
example is to take $\Si$ to be a four-punctured 
sphere $\IP^1\setminus\{0,t,1,\infty\}, 
\IB=\mathfrak{M}_{0,4}\cong\IC\setminus \{0,1\}$
the moduli space of ordered four-tuples of points, and $G=\SL_2(\IC)$.
Then the character varieties have 
complex dimension $6$ and 
are foliated by symplectic  leaves 
$\MB(\mathcal{C})\subset \MB$  
of complex dimension two (fixing the four local monodromy conjugacy classes), all preserved by the nonlinear connection, so we can restrict attention to these subbundles.
On the other side of the Riemann--Hilbert correspondence these nonlinear connections can be written explicitly, whence they become the second order nonlinear differential equations known as the Painlev\'e VI equations.
They control the isomonodromic deformations of rank two Fuchsian systems with four poles on $\IP^1$.
The Painlev\'e VI equations are the simplest examples of {\em nonlinear} geometric differential equations (and this story is essentially the way they were originally discovered by R. Fuchs, building on ideas of L. Fuchs and B. Riemann).
The flatness of the bundle $\uMB\to \IB$ gives the definition of ``isomonodromic'', as the underlying punctured surface varies.

The generalisation comes about by considering {\em irregular singular} (or {\em wild}) algebraic linear  connections on vector bundles on $\Si$, their
topological description furnished by the 
irregular Riemann--Hilbert correspondence
(Riemann--Hilbert--Birkhoff), involving monodromy and Stokes data, and the resulting generalisation of the Betti spaces $\MB$, the 
{\em wild character varieties}.
The initial motivation was simply to obtain the first $5$ Painlev\'e equations as integrability conditions
for linear connections. This was done by Garnier \cite{garn1912} (rewritten in a more convenient form by Jimbo--Miwa \cite{JM81}), and then generalised to 
generic irregular connections of arbitrary rank by Jimbo--Miwa--Ueno \cite{JMU81} (see their article, and those of Garnier \cite{garn1912, garn1926}, also for more detailed historical background).
The paper \cite{smid} then rewrote part of \cite{JMU81} in a more moduli theoretic language and proved all the Jimbo--Miwa--Ueno isomonodromy equations were symplectic (this involved generalising the Narasimhan, Atiyah--Bott, Goldman symplectic form to the irregular case), showing the generic wild character varieties formed a new class of holomorphic symplectic manifolds.

The key feature in the irregular case is that there are new parameters in the connections whose deformations behave exactly like the deformations of the underlying surface $\Si$. 
In brief (in the generic setting of \cite{JMU81}) one looks at connections locally isomorphic to connections of the form 
$$\nabla = d-A,\qquad A=dQ+\La\frac{dz}{z} + \text{(holomorphic)}$$ 
where $\La$ is a constant matrix and 
$Q$ (the {\em irregular type}) 
is a diagonal matrix of polynomials in $1/z$ (where $z$ is a local coordinate vanishing at the pole):
$$Q=\diag(q_1,\ldots,q_n),\qquad q_i\in x\IC[x], \quad x=1/z.$$
The generic case of \cite{JMU81} is when all the differences $q_i-q_j$ are polynomials of the same degree, and then the corresponding Stokes data is quite simple, as 
explained in \cite{BJL79, JMU81}. 
In effect the story in \cite{JMU81} then says that, in this setting, 
the base space $\IB$ of the deformations should be enriched by adding the irregular types at each pole.
The genericity condition should be preserved, and so the leading coefficient at each irregular pole should be diagonalisable with distinct eigenvalues.
If the structure group is $G=\GL_n(\IC)$ then this adds a factor of 
$$\mathfrak{t}_\text{reg} = \{v\in\IC^n\st v_i\neq v_j
\text{ if } i\neq j\}$$
to $\IB$ for each irregular pole.
This gives new braidings since the 
fundamental group of $\mathfrak{t}_\text{reg}$ is the (pure) $n$-string braid group.

The simplest global irregular example is to consider connections of the form
$$d-A,\qquad A= \left(\frac{A_0}{z^2} + \frac{B}{z}\right)dz$$
where $A_0$ is diagonal and $B$ is arbitrary.
Then the new deformation parameters are the eigenvalues of $A_0$, appearing in the irregular type $Q=-A_0/z$ at zero.
This example is especially alluring since such connections arise \cite{BJL81}  by considering the Fourier--Laplace transform of Fuchsian systems
$$\frac{d}{dz}-C,\qquad C= \sum_1^m \frac{A_i}{z-a_i}$$
whence it becomes clear that the 
eigenvalues of $A_0$ correspond exactly to the positions $\{a_i\}$ of the poles of the Fuchsian system.
This example makes it really clear that we should be thinking of the irregular type
on an equal footing to the pole positions.
Of course, most irregular connections will not be related to a regular singular connection by any such integral transform. 
(The exact statement of \cite{BJL81} was clarified in \cite{k2p} Appx. A, \cite{nlsl} Diagram 1---see also \cite{malg-book} XII, \cite{cmqv} \S2.)

The two main directions of generalisation of this story were then: 1) to replace the structure group $G$ by an arbitrary complex reductive group $G$ and thereby see that all the $G$-braid groups occur in 2d gauge theory (\cite{bafi}), and 2) to consider all the non-generic connections and their isomonodromic deformations (basic examples of this occur in the simply-laced story 
\cite{slims}, motivated by the increase in symmetry that occurs by allowing non-generic connections).

For example, in the sequence of works 
\cite{saqh, fission, gbs, twcv} 
(in increasing generality), 
the 
wild character variety $\MB=\Hom_\mathbb{S}(\Pi,G)/{\bf H}$ of {\em any} 
algebraic connection on a principal $G$-bundle on $\Si$ was constructed, as a
(finite dimensional)  algebraic Poisson variety, for any complex reductive group $G$.
The simpler $\GL_n(\IC)$ case most relevant here is reviewed in \cite{tops}. 
(This algebraic construction is complementary to the analytic proof 
\cite{wnabh} that the symplectic leaves of these Poisson varieties are hyperk\"ahler manifolds in type $A$, upgrading the complex symplectic quotient in \cite{smid} to a hyperk\"ahler quotient.)

As part of this story, the extra deformation parameters were isolated in a coordinate-free way, leading to the general definition of {\em wild Riemann surface} \cite{gbs, p12, twcv}.
The key point is that the wild character varieties form a local system of Poisson varieties
over any admissible deformation of a wild Riemann surface (so we get lots of new nonlinear flat connections generalising 
$\uMB\to \IB$ above, and encompassing  
all the Painlev\'e and Jimbo--Miwa--Ueno examples). 
This viewpoint vivifies the observation \cite{smid} that the irregular isomonodromy connections generalise the nonabelian Gauss--Manin connections. 
The admissibility condition generalises the notion of the connections remaining generic in the generic setting.

The aim of the present work is to study the admissible deformations 
of an arbitrary  wild Riemann surface in type $A$.
In other words we fix the structure group to be $G=\GL_n(\IC)$ or $\SL_n(\IC)$ and allow arbitrary formal structure at each pole.
In this setting a wild Riemann surface $\bSi$
is a triple $(\Si,\ba,\Th)$
where: 

\ 

$\bullet$\ $\Si$ is a compact Riemann surface

$\bullet$\ $\ba\subset \Si$ is a finite subset

$\bullet$\ $\Th$ is the data of a rank $n$ irregular class $\Th_a$ at each point $a\in \ba$.

\ 

\noindent 
In turn an irregular class is defined as follows (cf. \cite{twcv} Prop. 8).
Each point $a\in \Si$ canonically determines the set $\cS_a$ of Stokes circles at $a$, and 
an irregular class at $a$ is a finite multiset 
of Stokes circles, written symbolically as a finite sum:
$$\Th_a = \sum n_i I_i$$
where $n_i\ge 1$ are integer multiplicities and 
each $I_i\in \cS_a$ is a Stokes circle at $a$.
If we choose a local coordinate $z=1/x$ vanishing at $a$,
then each Stokes circle $I\in \cS_a$ can be written as 
\beq\label{eq: stcircle} 
I=\< q\>,\qquad q=\sum a_i x^{k_i}
\eeq
where the sum is finite, $a_i\in \IC$ and each $k_i$ is a rational number $>0$.
In brief the Stokes circle $\<q\>$ 
is the germ of the Riemann surface where the function $q$ becomes single valued, equipped with the germ of the function $q$ (a coordinate free definition is in \cite{twcv} Rmk. 3, generalising that in \cite{gbs}).
The {\em rank} of $\Th_a=\sum n_iI_i$ is 
$\sum n_i\ram(I_i)$ where $\ram(I)$ is the ramification number of the Stokes circle 
$I=\<q\>$ (the lowest common multiple of the denominators of the $k_i$ present in $q$).
If $\Ram(I)>1$ 
for any Stokes circle in the irregular class, 
we say that the irregular class is {\em twisted} 
(or ``ramified''), 
otherwise $\Theta_a$ is {\em untwisted}. 
For example on p.116 of Stokes' 1857 paper \cite{stokes1857} on the Airy functions,  he writes down a basis
$$u = Cx^{-1/4}\exp(-2x^{3/2})\wh F(-x^{-3/2})
\ +\  Dx^{-1/4}\exp(2x^{3/2})\wh F(x^{-3/2})$$
of formal solutions 
to the Airy equation $y''=9xy$ at $x=\infty$, 
where $\wh F$ is a formal power series. 
 In this case the Stokes circle is 
 $\<2x^{3/2}\>$, which has rank $2$ and ramification $2$, so is  twisted, and (still on p.116)   
 Stokes drew a projection of the Stokes circle to the $x$ plane, to illustrate the 
 change in dominance of the two branches of the 
 exponential factor $\exp(2x^{3/2})$. 
 This trefoil-like drawing was reproduced on the title page of \cite{twcv}, and many more such pictures can easily be drawn \cite{ssd}.
 In the generic setting one just has $\Th=\sum_1^n\<q_i\>$ if $Q=\diag(q_i)$ as above.
The original tame case arises by taking  
each Stokes circle to be the tame circle 
$\<0\>\in \cS_a$, so the irregular class is simply $n\<0\>$.
Thus the main difference in the wild case is that we have the choice of an arbitrary finite multiset of Stokes circles at each marked point
(and these give the possible essentially singular 
behaviours $\exp(q)$ of the solutions of the corresponding linear connections). 
Such functions $q$ %
with arbitrary ramification appeared in
Fabry's 1885 thesis \cite{fabry-these} p.85.


The main questions motivating this paper (and the corresponding results) 
are as follows:

\noindent{\bf Qn.1)}\   Suppose we have two rank $n$ wild Riemann surfaces $\bSi, \bSi'$. How do we decide if they are admissible deformations of each other?

In the tame case the answer is well-known: it happens if and only if the genus $g$ of $\Si$ and the number $m=\#\ba$ of marked points match up.
In the wild case we will give a complete answer to this question by defining an appropriate combinatorial object, the fission tree $\cT(\Th)$ of any 
irregular class $\Th$, so that a wild Riemann surface determines a fission forest $\mathbf{F}$  (i.e. a collection of isomorphism classes of fission trees).

\begin{thm}
Two irregular classes  $\Th,\Th'$ at a point $a\in \Si$ are admissible deformations of each other if and only if their fission trees are isomorphic: $\cT(\Th)\cong \cT(\Th')$.
Consequently two rank $n$ wild Riemann surfaces $\bSi, \bSi'$
are admissible deformations of each other if and only if the corresponding pairs $(g,\mathbf{F})$ are equal, where $\mathbf{F}$ is the
fission forest and $g$ is the genus.
\end{thm}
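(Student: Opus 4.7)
The plan is to reduce the global statement to the local one, since the genus $g$ is a topological invariant of the underlying compact Riemann surface and the marked points deform continuously in their configuration space, so the nontrivial content lies at each marked point $a\in\ba$. There the task splits into two directions: (a) the fission tree $\cT(\Th_a)$ is locally constant along any admissible deformation, and (b) any two irregular classes with isomorphic fission trees can be joined by an admissible deformation path. Once both are established at the local level, the global statement follows by combining the local fission trees into the forest $\mathbf{F}$ over the whole marked point set.

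For direction (a), the idea is to extract the fission tree intrinsically from the combinatorial data that is preserved by the admissibility condition. Recall that a Stokes circle $I=\<q\>$ with $q=\sum a_i x^{k_i}$ has a canonical ``slope filtration'' by the exponents $k_i>0$: truncating $q$ above a given slope produces a coarser Stokes circle, and the hierarchy of such truncations naturally assembles into a rooted tree, whose leaves record the individual circles appearing in $\Th_a=\sum n_i I_i$ (with their multiplicities) and whose internal vertices record the ``fission levels'' at which groups of circles agree. The admissibility condition is designed precisely so that, along a deformation, no two Stokes circles are allowed to collide, no ramification jumps occur, and no slope in any $q_i$ is allowed to vanish or spawn a higher-order term; in particular the ramification number $\ram(I_i)$, the set of slopes appearing, and the combinatorial pattern of which truncations coincide are all locally constant. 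Thus the fission tree is an invariant, and admissible deformation classes refine to a coarser equivalence than the mere isomorphism class of trees only if some extra combinatorial data is lost, which by construction of $\cT$ does not occur.

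For direction (b), the strategy is to show that the stratum of rank $n$ irregular classes with a fixed fission tree type $\cT$ is a (nonempty) connected complex manifold, and that any two points in it lie in a common admissible deformation. Concretely, the tree $\cT$ prescribes, at each internal vertex, the coefficients ``to be chosen generically'' (i.e.\ nonzero, distinct, or avoiding explicit discriminants expressed in terms of the lower levels), and this data naturally lives in a product of punctured affine spaces fibred over one another along the tree, with explicit symmetric-group actions at each level accounting for the indistinguishability of equal Stokes circles within a multiset. Such fibre bundles of configuration-type spaces are connected, so any two irregular classes with the same fission tree can be joined by a path in this space; this path is by construction an admissible deformation because at every time it meets the admissibility conditions recalled above. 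The main obstacle here is to handle the twisted case cleanly: when $\ram(I_i)>1$, the Stokes circle itself is a Galois orbit of exponents, and one must verify that the local ramified coordinates can be chosen coherently along the deformation---this is where the coordinate-free definition of a Stokes circle and its ramification is essential, and where one uses the fact that the fission tree records the ramification numbers as part of its vertex labelling.

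Finally, to globalise: a wild Riemann surface $\bSi=(\Si,\ba,\Th)$ deforms admissibly over a base $\IB$ iff $\Si$ and $\ba$ deform as a pointed Riemann surface (which by the classical theory depends only on $g$ and $\#\ba$), and, over each connected component of this base, each $\Th_a$ deforms admissibly in the local sense. By (a) and (b) applied at each marked point, the resulting equivalence classes are parameterised exactly by $(g,\mathbf{F})$, proving the theorem.
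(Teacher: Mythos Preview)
Your overall architecture matches the paper's: reduce to the local statement, prove (a) the fission tree is constant along admissible families and (b) the locus with fixed tree is connected, then globalise as in \S\ref{ssn: skeleta}. However, direction (a) contains a genuine error. You assert that along an admissible deformation ``no slope in any $q_i$ is allowed to vanish or spawn a higher-order term'' and that ``the set of slopes appearing'' is locally constant. This is false: admissibility \eqref{eq: adm condn} constrains only the slopes of the \emph{differences} $q_i-q_j$ (ranging over all pairs, including Galois conjugates), not the exponents present in any individual $q_i$. For example $q=x^{3/2}+x^{1/2}$ deforms admissibly to $x^{3/2}$, the exponent $1/2$ being \emph{inconsequential} in the paper's sense. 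The correct invariant is the fission datum---the level sets $\Levels(I_i)$ together with the pairwise fission exponents $f_{I_i,I_j}$---and extracting exactly this from admissibility is the content of Prop.~\ref{prop: one circle classn} and Lemma~\ref{lemma:slopes_differences}, which shows that the set $\{\slope(q_i-\wh q_j)\}$ decomposes as the levels of the common part plus a single fission exponent. Your plan skips this analysis, so it cannot tell which coefficients are free and which are constrained, and hence cannot conclude that the tree (rather than something finer) is the exact invariant.

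For (b) your outline is right but underestimates the twisted case: in the product decomposition $\pureconfig(Q)\cong\prod_v\pureconfig_v(\cT)$ (Cor.~\ref{cor: config prod decomp}), the factors at vertices where the ramification jumps are not ordinary configuration spaces $X_n$ but the complements $X^*_{n,N}=\{a_i\neq 0,\ a_i^N\neq a_j^N\}$, a consequence of the Galois identification $a\sim\zeta a$ analysed in Prop.~\ref{proposition:types_of_fission_two_factors}. Your diagnosis of the twisted obstacle as a matter of ``choosing local ramified coordinates coherently'' misidentifies this; it is not a coordinate issue but a structural feature of how distinct Stokes circles can share the same truncation up to Galois action.
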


Thus, in essence, in the wild case the number $m$ of marked points gets upgraded to the fission forest $\mathbf{F}$ (with $m$ trees in it).
This result will be deduced from a more precise statement about irregular types (see Cor. \ref{cor: adm defm types condns}).
The pair $(g,\mathbf{F})$ is called the 
{\em topological skeleton} of the wild Riemann surface $\bSi$.
In particular this yields a simple criterion to see if two wild character varieties are isomorphic:

\begin{cor}
Suppose that $\bSi, \bSi'$ are two rank $n$ wild Riemann surfaces and
 let $\MB(\bSi)$, $\MB(\bSi')$
be the corresponding  Poisson wild character varieties 
(as in \cite{twcv, tops}, generalising \cite{gbs, saqh}).
If $\bSi, \bSi'$ have the same genus and fission forests, 
then there is an algebraic Poisson isomorphism:
$$\MB(\bSi)\ \cong \  \MB(\bSi')$$
between the corresponding Poisson wild character varieties.
In particular their hyperk\"ahler symplectic leaves are thus deformation equivalent.
\end{cor}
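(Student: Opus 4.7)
The plan is to deduce the corollary directly from the preceding theorem by invoking the local system structure of the wild character varieties over admissible deformations. The theorem asserts that $\bSi$ and $\bSi'$ are admissible deformations of each other as soon as the pair $(g,\mathbf{F})$ matches. First I would make this precise by producing a connected base $\IB$ of admissible deformations parameterising wild Riemann surfaces with topological skeleton $(g,\mathbf{F})$, together with two points $b,b'\in\IB$ whose fibres are $\bSi$ and $\bSi'$ respectively; concretely this base $\IB$ is a fibration over the usual moduli of pairs $(\Si,\ba)$ whose fibre is a product, one factor per marked point $a\in\ba$, of the local configuration spaces of irregular classes constructed in the main body of the paper. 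Path-connectedness of $\IB$ (for a fixed isomorphism class of fission forest) is essentially the sharp parameterisation result of the theorem.

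Second, I would invoke the wild analogue of the family $\uMB\to\IB$ discussed in the introduction: the algebraic construction of the Poisson wild character variety of~\cite{twcv} (see also \cite{tops}) functions in families of wild Riemann surfaces, so the fibrewise $\MB(b)$ assemble into a bundle of algebraic Poisson varieties over $\IB$. Moreover this bundle carries a complete flat algebraic Ehresmann connection (the irregular Gauss--Manin connection generalising the Jimbo--Miwa--Ueno isomonodromy connections), making it a local system of Poisson varieties. The admissibility condition, by its very definition, guarantees that the discrete topological type of the Stokes data remains locally constant over $\IB$, so the isomorphism type of $\MB(b)$ cannot jump.

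Third, parallel transport along any continuous path from $b$ to $b'$ in $\IB$ then produces an algebraic Poisson isomorphism $\MB(\bSi)\cong\MB(\bSi')$. Different paths differ by an element of the wild mapping class group $\pi_1(\IB,b)$ (the very object this paper is devoted to), so the isomorphism is only canonical up to that action; however, only existence is claimed in the corollary. The main conceptual obstacle in this programme is thus the path-connectedness of $\IB$, which is precisely supplied by the preceding theorem together with the construction of the local configuration spaces; once that is in hand, the Poisson isomorphism is a formal consequence of the local system structure.

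Finally, the hyperk\"ahler statement follows by restricting the same local system to the symplectic leaves: fixing the additional numerical parameters that determine a symplectic leaf (the formal residue and monodromy conjugacy class data at each marked point) cuts out a sub-local-system whose fibres are the leaves in question, and in type~$A$ these are hyperk\"ahler manifolds by the wild nonabelian Hodge correspondence. Hence the corresponding leaves of $\MB(\bSi)$ and $\MB(\bSi')$ fit into a continuous family of hyperk\"ahler manifolds, which is exactly what is meant by deformation equivalence here.
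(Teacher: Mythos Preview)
Your proposal is correct and follows essentially the same approach as the paper: invoke the preceding theorem to place $\bSi$ and $\bSi'$ as two fibres of an admissible family over a connected base $\IB$, then use the local-system-of-Poisson-varieties structure (citing \cite{gbs} Thm.~10.2, \cite{smid}, \cite{twcv}) to transport along a path and obtain the algebraic Poisson isomorphism. Your version is more detailed in spelling out the construction of $\IB$ and the hyperk\"ahler leaf statement, but the argument is the same.
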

\pf
Since $\bSi, \bSi'$ are admissible deformations of each other, 
there is a local system of Poisson varieties over some base $\IB$ 
having $\MB(\bSi)$ and  
$\MB(\bSi')$ as two of its fibres.
This statement is proved in \cite{gbs} Thm. 10.2 in the untwisted case (see also \cite{smid} Prop. 3.8, Cor. 3.9 in the generic setting) and that proof works verbatim in the general setting of \cite{twcv}. 
Thus any path $\ga$ in $\IB$ between the corresponding two points of $\IB$ 
lifts to an algebraic Poisson 
isomorphism $\MB(\bSi)\cong\MB(\bSi')$.
\epf

We will also give a sharp characterisation of the possible fission trees 
(Defn. \ref{defn: fissiontree}, Cor. \ref{cor: realisability}), which thus implies a bound 
 on the possible 
isomorphism classes of wild character varieties (\S\ref{ssn: skeleta}).

\

\noindent{\bf Qn.2)}\  What do the admissible deformations of an arbitrary irregular class look like? What types of generalised braid groups appear in general? Are there explicit configuration spaces analogous to the simple configuration spaces
$\IC^n\setminus (\text{diagonals})$ that appear both in the tame case 
and in the generic irregular case?

To answer these, we first define the notion of pointed irregular types 
(by adding some ordering data to an irregular class, see Defn. \ref{defn: pit}) and then
give an explicit construction  of a 
configuration space $\pureconfig(Q)$ of all admissible deformations 
of a pointed irregular type  $Q$ 
(with bounded slope, i.e. bounded 
Poincar\'e--Katz rank,
 $\Katz(Q)$). 

The configuration space $\pureconfig(Q)$ is completely explicit 
and involves marking free coefficients on the fission tree, 
subject to three types of conditions 
(see \eqref{eq: explicit config} and Thm. \ref{thm: charn of realzns}). 
This  involves a truncation $\cT^\flat$ 
of the fission tree $\cT$ just above the maximal slope 
(see \S\ref{ssn: truncatedtrees}).
This helps understand the admissible deformations since there are 
natural factorisations of the configuration spaces into simple pieces:

\begin{thm}
The configuration space $\pureconfig(Q)$ of admissible deformations of any pointed irregular type $Q$ is homeomorphic to the following product over the vertices 
$\IV^\flat$ of its 
truncated fission tree $\mathcal{T^\flat}$:
\[
\pureconfig(Q)\cong\prod_{v\in \IV^\flat} \pureconfig_v(\cT)
\]
where $\pureconfig_v(\cT)$ is a point if the vertex $v$ has 
no non-empty children, and if $v$ has $n$ non-empty children, 
then $\pureconfig_v(\cT)$ is homeomorphic to one of the following spaces:
\begin{align*}
X_n& := \{ a_1,\dots, a_n\in \IC \;\vert\; a_i\neq a_j \text{ for } i\neq j\},\\
X^*_{n,N}& := \{ a_1,\dots, a_n\in \IC \;\vert\; a_i\neq 0, \; a_i\neq \zeta a_j \text{ for } i\neq j, \zeta^N=1\}.
\end{align*} 
\end{thm}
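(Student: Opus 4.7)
The plan is to read off the factorisation directly from the explicit realisation of $\pureconfig(Q)$ provided by Theorem~\ref{thm: charn of realzns}, which parameterises points of $\pureconfig(Q)$ as assignments of free complex coefficients at prescribed positions on the fission tree, subject to three local realisability conditions: pairwise distinctness of sibling coefficients (so sibling Stokes circles do not coalesce), non-vanishing at each ramified step (so the ramification is not lost), and Galois non-equivalence of siblings over a ramified base (so the Stokes circles are not identified by the residual Galois action).

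The first key point is that all three conditions are intrinsic to a single vertex $v\in\IV^\flat$: each only constrains the coefficients attached to the children of $v$ in $\cT$, and never mixes children of different vertices. One therefore obtains a canonical map
\[
\pureconfig(Q) \;\longrightarrow\; \prod_{v\in\IV^\flat} \pureconfig_v(\cT),
\]
where $\pureconfig_v(\cT)$ is cut out by the conditions local to $v$ alone. Since both sides are open subsets of products of copies of $\IC$ and the map simply regroups coordinates, it is a homeomorphism.

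The next step is to identify $\pureconfig_v(\cT)$ for a single vertex $v$. If $v$ has no non-empty children then there are no free coefficients at $v$ and $\pureconfig_v(\cT)$ is a point. Otherwise, let $n\geq 1$ be the number of non-empty children of $v$ and let $N$ be the ramification index of the step from $v$ to these children (read off combinatorially from the fission tree at $v$); the residual Galois group at this step is then $\mu_N$. If $N=1$ only pairwise distinctness survives and one obtains $X_n$. If $N>1$, the non-vanishing condition becomes $a_i\neq 0$ and the Galois non-equivalence condition becomes $a_i\neq\zeta a_j$ for every $\zeta$ with $\zeta^N=1$ and every $i\neq j$, giving $X^*_{n,N}$.

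The main subtlety lies in the correct combinatorial bookkeeping of the ramification index $N$ at each vertex, and in verifying via Theorem~\ref{thm: charn of realzns} that the three local conditions genuinely exhaust the constraints on admissible deformations --- in particular that no additional global conditions tying together coefficients at different vertices of $\IV^\flat$ appear, which is what justifies the purely vertex-by-vertex decomposition. Once this is in hand, the identification of each local factor with $X_n$ or $X^*_{n,N}$ is a direct translation of the local realisability conditions.
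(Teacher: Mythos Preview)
Your proposal is correct and follows essentially the same route as the paper: the paper's proof of this result (Cor.~\ref{cor: config prod decomp}) simply invokes the identification $\pureconfig(Q)\cong\pureconfig(\cT^\flat)$ from Thm.~\ref{thm: configs from realzns} together with the characterisation of realisations in Thm.~\ref{thm: charn of realzns}, then reads off that the three conditions are local to the children of each vertex and match the definitions of $X_n$ and $X^*_{n,N}$. Your write-up is a slightly more expanded version of the same argument, with one minor terminological conflation (you attribute to Thm.~\ref{thm: charn of realzns} both the identification of $\pureconfig(Q)$ with realisations and the characterisation of realisations, whereas the paper separates these into Thm.~\ref{thm: configs from realzns} and Thm.~\ref{thm: charn of realzns} respectively).
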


Compared to the untwisted case  
already studied in \cite{doucot2022local, doucotrembado2023topology}, 
the second factors $X^*_{n,N}$ are new: they are hyperplane complements whose associated hyperplane arrangements are not the complexification of some real hyperplane arrangement. Interestingly, the corresponding braid groups have been studied in \cite{BMR1998complex}, 
and the corresponding Weyl groups are the generalised symmetric 
groups  (see below and  Rmk. \ref{rmk: bmr etc}).

See Example \ref{eg: bigexample} p.\pageref{eg: bigexample} for 
a somewhat involved example of how the configuration space may thus be read off from the fission tree.

As a consequence, the pure local wild mapping class groups (i.e. the fundamental groups of these configuration spaces) factorise as products of the pure braid groups associated to these hyperplane arrangements:

\begin{cor}
Let $Q$ be a \mit{} and let $\mathcal{T^\flat}$ be its truncated fission tree. 
We have
\[
\purelwmcg(Q)\cong \prod_{v\in \IV^\flat} \purelwmcg_v(\cT),
\]
with $\purelwmcg_v(\cT) := \pi_1(\pureconfig_v(\cT))$ the pure braid group associated to the hyperplane complement 
$\pureconfig_v(\cT)$.
\end{cor}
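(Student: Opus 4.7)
The plan is to deduce this corollary directly from the preceding theorem, which produces the homeomorphism
\[
\pureconfig(Q)\ \cong\ \prod_{v\in \IV^\flat} \pureconfig_v(\cT).
\]
Since $\purelwmcg(Q)$ is by definition $\pi_1\bigl(\pureconfig(Q)\bigr)$, applying $\pi_1$ to both sides reduces the statement to the elementary fact that the fundamental group functor commutes with finite products of pointed path-connected spaces. Thus the whole task is to identify each factor as the fundamental group of a suitable hyperplane complement, and to check the topological hypotheses that make the product formula for $\pi_1$ applicable.

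First I would verify path-connectedness of each factor $\pureconfig_v(\cT)$. The factor $X_n$ is the complement in $\IC^n$ of the union of the big diagonals $\{a_i=a_j\}$, hence the complement of a proper complex algebraic subvariety of $\IC^n$, and so is connected (indeed it is smooth and irreducible over $\IC$). The factor $X^*_{n,N}$ is similarly the complement in $\IC^n$ of the finite collection of complex hyperplanes $\{a_i=0\}$ and $\{a_i=\zeta a_j\}$ for $\zeta^N=1$, and is path-connected for the same reason. The trivial one-point factors (arising from vertices with no non-empty children) contribute nothing and can be suppressed.

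Second I would choose a basepoint in $\pureconfig(Q)$ and transport it, through the homeomorphism of the preceding theorem, to a compatible system of basepoints in the factors. The standard product isomorphism
\[
\pi_1\bigl(X\times Y,(x_0,y_0)\bigr)\ \cong\ \pi_1(X,x_0)\times \pi_1(Y,y_0),
\]
applied inductively over the finite index set $\IV^\flat$, then yields the claimed decomposition of $\purelwmcg(Q)$ as a direct product of the groups $\purelwmcg_v(\cT)=\pi_1\bigl(\pureconfig_v(\cT)\bigr)$. Since each non-trivial factor $\pureconfig_v(\cT)$ is by construction a complex hyperplane complement (of type $X_n$ or $X^*_{n,N}$), the factors $\purelwmcg_v(\cT)$ are precisely the pure braid groups of the associated complex hyperplane arrangements in the sense of \cite{BMR1998complex}.

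In short, the real content is all in the preceding theorem; this corollary is simply its $\pi_1$-level shadow. The only potential pitfall is the basepoint bookkeeping and the check that each factor is path-connected, both of which are routine; no genuine obstacle arises.
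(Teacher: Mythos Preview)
Your proposal is correct and takes essentially the same approach as the paper: the paper treats this as an immediate consequence of the product decomposition $\pureconfig(Q)\cong\prod_{v\in\IV^\flat}\pureconfig_v(\cT)$ (stating only that it ``follows immediately'' from the description of $\pureconfig(Q)$), and you have simply filled in the routine details of path-connectedness and basepoint bookkeeping that the paper leaves implicit.
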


In a similar way as in \cite{doucotrembado2023topology}, passing from these pure local wild mapping class groups to the \nonpure{} local wild mapping class groups  (i.e. forgetting the order of the exponential factors) involves considering the automorphisms of the fission tree, leading to a finite group, the 
{\em Weyl group} $W(\cT)$ of the tree, an extension of $\Aut(\cT)$. 

\begin{thm}

Let $\Theta=[Q]$ be an irregular class associated to the \mit{} $Q$ with fission tree $\mathcal{T}$. Then the \nonpure{} local wild mapping class group $\fulllwmcg(\Theta)$ is an extension of the Weyl group $W(\mathcal{T})$ of the  fission tree  by the pure wild mapping class group $\purelwmcg(\Theta)$, i.e. we have a short exact sequence 

\[
1 \longrightarrow \purelwmcg(Q) \longrightarrow \fulllwmcg(\Theta) \longrightarrow W(\mathcal{T}) \longrightarrow 1.
\]
\end{thm}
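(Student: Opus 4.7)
The plan is to realise $\fullconfig(\Theta)$ as the quotient of $\pureconfig(Q)$ by a free action of the finite group $W(\cT)$, and then deduce the short exact sequence from standard covering-space theory. By definition, a \mit{} $Q$ refines its underlying irregular class $\Theta = [Q]$ by additional ordering data (an ordering of the Stokes circles and a branch choice at each ramified one); the set of orderings refining a fixed class is exactly a $W(\cT)$-torsor, by the very definition of the Weyl group as the relabelling group of the tree. Passing to families of admissible deformations, this forgetful construction extends to a continuous surjection
\[
\pi \colon \pureconfig(Q) \longrightarrow \fullconfig(\Theta)
\]
whose fibres are precisely the $W(\cT)$-orbits.

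The next step is to check that this action is free. Using the product decomposition $\pureconfig(Q) \cong \prod_{v \in \IV^\flat} \pureconfig_v(\cT)$ from the previous theorem, the Weyl group fits in a short exact sequence $1 \to W^\circ(\cT) \to W(\cT) \to \Aut(\cT) \to 1$, where $W^\circ(\cT) = \prod_v W_v$ acts factorwise: the local factor $W_v$ is the symmetric group $S_n$ for $\pureconfig_v(\cT) \cong X_n$, and the generalised symmetric group $\IZ/N \wr S_n$ for $\pureconfig_v(\cT) \cong X^*_{n,N}$, while $\Aut(\cT)$ permutes factors of isomorphic subtree type. The factorwise action is free on each component because the hyperplanes removed to build $X_n$ or $X^*_{n,N}$ are exactly the fixed-point loci of the non-trivial elements of $W_v$ (including, in the twisted case, the cyclotomic hyperplanes $\{a_i = \zeta a_j\}$ for $\zeta^N = 1$ and $\{a_i = 0\}$). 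Moreover, any tree automorphism permuting two isomorphic subtrees is witnessed by a non-trivial transposition in the local Weyl group of their common parent vertex, and so the apparent ``diagonal'' fixed loci are already excised by the hyperplane conditions at that parent. Hence $W(\cT)$ acts freely on the path-connected, locally path-connected space $\pureconfig(Q)$.

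Since $W(\cT)$ is finite and acts freely, $\pi$ is a regular covering with deck group $W(\cT)$, and we identify $\fullconfig(\Theta) \cong \pureconfig(Q)/W(\cT)$. The associated short exact sequence of fundamental groups is exactly
\[
1 \longrightarrow \purelwmcg(Q) \longrightarrow \fulllwmcg(\Theta) \longrightarrow W(\cT) \longrightarrow 1,
\]
as required. The main obstacle lies in the second step: namely, carefully matching the intrinsically defined Weyl group of the fission tree with its coordinate action on the factors $X_n$ and $X^*_{n,N}$, and verifying global freeness in the presence of tree automorphisms that permute isomorphic subtrees. Once this combinatorial correspondence is established, the remainder of the argument is formal and invokes only standard properties of regular coverings by free finite group actions.
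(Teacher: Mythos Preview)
Your overall strategy---realise $\fullconfig(\Theta)$ as $\pureconfig(Q)/W(\cT)$ for a free action of the finite group $W(\cT)$, then invoke covering-space theory---is exactly the route the paper takes. However, your description of $W(\cT)$ and its action is incorrect, and this is where the argument breaks down.

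The paper's Weyl group is \emph{not} assembled vertex-by-vertex as a product of local reflection groups. It is defined globally from the leaves: $W(\cT)$ is the subgroup of $\Aut(\cT)\ltimes\bigl(\IZ/r_1\IZ\times\cdots\times\IZ/r_p\IZ\bigr)$ cut out by the congruences $d_i\equiv d_j\bmod r_{ij}$, where $r_i$ is the ramification of the $i$th leaf and $r_{ij}$ that of the nearest common ancestor. In particular the kernel of $W(\cT)\to\Aut(\cT)$ is an \emph{abelian} group of cyclic shifts, not a product of symmetric or generalised symmetric groups $S_n$ or $\IZ/N\wr S_n$ as you claim. The permutations of children at a vertex already live in $\Aut(\cT)$, not in your $W^\circ$. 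You can check this against the paper's basic example: for $p$ mandatory children of the root with relative ramification $N$, one gets $W(\cT)=\Sym_p\ltimes(\IZ/N\IZ)^p=G(N,1,p)$, whereas your recipe would produce a strictly larger group.

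Relatedly, the action is not factorwise with respect to the product decomposition $\pureconfig(Q)\cong\prod_v\pureconfig_v(\cT)$. A Galois shift $\sigma^{d_i}$ applied to $q_i$ multiplies \emph{every} coefficient along the $i$th full branch by a height-dependent root of unity, so it touches many vertices at once; and a tree automorphism permutes entire subtrees, hence many factors simultaneously. The paper instead establishes freeness directly from the definition of the action on pointed irregular types: a permutation fixing $Q$ would identify two distinct Stokes circles (impossible, since repeated circles are recorded by leaf multiplicities), and a nontrivial cyclic shift fixing $q_i$ would contradict $r_i=\ram(q_i)$. That the orbits are exactly the fibres of $Q\mapsto[Q]$ is then the content of the lemma characterising $W(\cT)$ as precisely those elements of the ambient semi-direct product that preserve compatibility.
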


In some simple examples related to the configuration spaces $X^*_{n,N}$ (see Example \ref{eq: getgensymgp}) 
the Weyl group $W(\cT)$ is isomorphic to the generalised symmetric group  $S(N,n)$, which in turn is isomorphic to 
the complex reflection group $G(N,1,n)$ in the Shephard--Todd list 
\cite{Shep-Todd}. Thus we 
have a modular interpretation (in $2d$ gauge theory) 
of an infinite family of complex reflection groups and their braid groups.
(This is parallel to the appearance \cite{bafi} of the 
$G$-braid groups in $2d$ gauge theory for any complex reductive group $G$ in the untwisted case.)

As an application of these local results we are able to 
make a global statement,  
and write 
down the dimension of the (global) 
moduli spaces $\mathfrak{M}_{g,\mathbf{F}}$ 
of rank $n$, trace-free
wild Riemann surfaces for any $n$, 
a generalisation of Riemann's count 
$3g-3$   of 
the number of moduli of a compact genus $g$ Riemann surface.  
Indeed in the trace-free case ($G=\SL_n(\IC)$) we expect 
$\mathfrak{M}_{g,\mathbf{F}}$   to be Deligne--Mumford if  
$2g-2+\sum \nu(\cT)> 0$ 
where $\nu(\cT)=1+\Katz(\cT)$
and we sum over all the trees $\cT$ in the forest 
$\mathbf{F}$, and then 
its  dimension is 
\beq
\dim(\mathfrak{M}_{g,\mathbf{F}}) = 3g-3+\sum \mu(\cT)
\eeq
where $\mu(\cT)$ is the moduli number
of the fission tree $\cT$,  
from Defn. \ref{defn: moduli number} (equal to $1$ plus the dimension of the configuration space, where the $1$ corresponds to the pole position).
In the tame case with $m$ marked points, this specialises to the familiar formula:
$$\dim(\mathfrak{M}_{g,m}) = 3g-3+m.$$

\subsection{Layout of the paper}

The next section will review the notion of 
Stokes circles and irregular classes in more detail, before defining various convenient flavours of irregular types, adding certain ordering data to an irregular class.
Then the notion of admissible deformation will be reviewed leading to the initial definition of the configuration spaces. 
Section \ref{sn: classn} then gives the classification of admissible deformations, in terms of level data and fission data leading up to the precise definition of fission trees. This then yields the most convenient definition of the configuration spaces in terms of realizations of fission trees 
(Thms. \ref{thm: charn of realzns}, \ref{thm: configs from realzns}).
Then we deduce the product decompositions 
(Cor. \ref{cor: config prod decomp})
and the classification of admissible deformations in terms of fission trees (Cor. \ref{cor: adm defm types condns}).
Then \S\ref{ssn: skeleta} deduces the global result 
involving fission forests/topological skeleta 
$(g,\mathbf{F})$, and finally \S\ref{sn: lpwmcgs} deduces the results on the local wild mapping class groups and establishes the link to the braid groups of the complex reflection groups $G(N,1,n)$. Some of the possible future projects are discussed in the last section.

\section{General setting}
\subsection{Twisted irregular types} \ 

\vspace{-.34cm}
\subsubsection{The exponential local system} 
The formal data of irregular connections in the twisted case can be formulated geometrically in terms of the so-called exponential local system, which we now briefly recall (see \cite{tops} for more details). In summary the idea is to see the exponents $q$ of the exponential factors $\exp(q)$
(occurring in formal solutions of connections, controlling their essentially singular behaviour)
as sections of an intrinsic 
covering space $\pi\colon\cI\to \partial$
(i.e. a local system of sets)
on the circle of directions around the singularity. 
(We will often abuse language and refer to the exponents $q$ themselves
as the exponential factors.)
The connected components of $\cI$ are the 
Stokes circles.
The basic ``extra modular parameters'', the irregular class (that we eventually want to deform), 
is a finite multiset in the set $\cS=\pi_0(\cI)$ of Stokes circles---this amounts to choosing a finite number of Galois orbits of exponents $q$, each with a multiplicity $\ge 1$.
Note that we use the word ``twisted'' 
to refer to the case where one of the Stokes circles is a nontrivial cover of the circle of directions $\d$
as in \cite{twcv} (similarly to the theory of twisted loop groups), or equivalently that one of the exponents $q$ involves a root of the local coordinate (this is sometimes  referred to as the case with ``ramified formal normal form'', and should not be confused with the term ``ramified connection'', meaning any type of singular connection).

Let $\Sigma$ be a Riemann surface and 
$a\in \Sigma$ a point. 
Let $\phi\colon \widehat{\Sigma}\to \Sigma$ be 
the real oriented blow-up at $a$ of $\Sigma$. The preimage $\partial := \phi^{-1}(a)$ is a circle whose points correspond to real oriented directions in $\Si$ at $a$. An open interval 
$U\subset \partial$ determines  
a germ of sector $\Sect_U$ at $a$, 
and if $d\in \d$ is a direction then $\Sect_d$ will denote the germ of an open sector spanning the direction $d$ 
(where both the opening and the radius may decrease).
Strictly speaking $\Sect_d$ is the tangential filter of open sets determined by the direction $d$, as in \cite{bbkGenTop} Ch.1 \S6 (this type of tangential filter appears on p.85 of \cite{deligne-3pts}).
In turn, functions on $\Sect_d$ (i.e. germs in the direction $d$) are defined as usual for germs of mappings with respect to a filter (\cite{bbkGenTop} p.66).

Let $z$ be a local coordinate vanishing at $a$ and write $x=z^{-1}$. The exponential local system $\Ical$ is a local system of sets (that is, a covering space) on $\partial$ whose sections are germs of holomorphic functions  on 
sectors (``Fabry functions'') that are finite sums of the form:
\[
q = \sum_{i} a_i x^{k_i},
\]
where $k_i\in \mathbb{Q}_{>0}$, and $a_i\in\IC$.
More precisely if we fix a direction $d\in \d$ and 
choose a branch of $\log(z)$
on $\Sect_d$ then the fibre $\cI_d=\pi^{-1}(d)$ 
of $\cI$ over $d$ is the set
of all such functions on $\Sect_d$, so that
\begin{equation}\label{eq: iso to ppp}
\cI_d = \left\{ q=\sum_{i} a_i x^{k_i} \right\}
\quad\cong\quad
\bigcup_{n\in \IN} x^{1/n}\IC[ x^{1/n} ]
=\bigcup_{n\in \IN} 
\IC\flp z^{1/n}\frp/\IC\flb z^{1/n}\frb 
\end{equation}
where $x^k := \exp(-k\log(z))$ on the left, and $x=1/z$ is viewed as a symbol on the right.
Thus they are ``principal parts of Puiseux series'', 
but viewed as actual functions on $\Sect_d$, 
via a choice of logarithm 
(i.e. the isomorphism from $\cI_d$ to Puiseux principal parts depends on this choice).
The intrinsic (coordinate free) definition of $\cI$ is in 
\cite{twcv} Rmk. 3, whence a point $\al\in \cI_d$ is an {\em equivalence class} of certain holomorphic functions 
$q_\al$ on $\Sect_d$.

The connected component of $\cI$ of such a local section 
$q$ is a finite order cover of the circle $\partial$; this covering circle, ``the Stokes circle $\<q\>$  of $q$'', is 
essentially the (germ near $\partial$ of the) 
Riemann surface where $q$ becomes single valued.

More precisely, let $r=\ram(q)$ be the smallest integer such that the expression $q=\sum_{i} a_i x^{k_i}$ is a polynomial in $x^{1/r}$, the \textit{ramification order} of $q$. 
The corresponding holomorphic function is multivalued, and becomes single-valued when passing to a finite cover 
$t^r=z$. Therefore, the corresponding Stokes circle, which we denote by $\cir{q}$, is an $r$--sheeted cover of $\partial$. As a topological space, it is homeomorphic to a circle, and $\Ical$ is thus a disjoint union of (an infinite number of) these Stokes circles.
Thus $\pi\colon\cI\to \d$ is a covering space and if $I=\<q\>\subset \cI$ is a connected component 
then $\pi\colon I\to \d$ is a degree $r=\ram(q)$ covering map between two circles.

There are several polynomials in $x^{1/r}$ giving rise to the same connected component $I=\cir{q}$. They correspond to the Galois orbit of $q$, under the Galois group of $I\to \partial$ which is isomorphic to $\Z/r\Z$, and are parameterised by the $r$ points of the fibre 
$I_d=\pi^{-1}(d)$ for any direction $d$.

Explicitly, if we write $q=\sum_{j=1}^{s} a_j x^{j/r}$, $r=\ram(q)$, with $a_s\neq 0$, 
then the polynomials $q_i$ such that $\cir{q_i}=\cir{q}$ are the
Galois conjugates
\[
q_i=\si^i(q)=\sum_{j=1}^s a_j \omega^{ij} x^{j/r}, 
\qquad i=0,1,\ldots,r-1,
\]
where $\omega=\exp(-{2\sqrt{-1}\pi/r})$.
The fibre $I_d$ above $d$ of the cover $I\to \partial$ is equal to the set of germs of functions $q_0,\dots, q_{r-1}$. 
The monodromy $\sigma\colon I_d\to I_d$ of the cover 
$I\to \partial$ is given by $\sigma(q_i)=q_{i+1}$.

The degree $s$ of $q$ as a polynomial in $x^{1/r}$ is called the  irregularity of $q$, 
which we denote by $\Irr(q)\in \IN$. 
The slope of $q$, $\slope(q) := \frac{s}{r}=\Irr(q)/\ram(q)$, is the maximal exponent present in $q$. 
If $r=1$ we say that the circle $\cir{q}$ is untwisted/unramified. We will refer to $\cir{0}$ as the \textit{tame circle}.

Here we view $\cI$ as a disjoint union of circles, and the map $\pi\colon\cI\to \d$ as a covering space with discrete fibres. Later, below, we will deform the functions $q$, thus ``remembering'' the complex vector space structure of the fibres of $\pi$.

\subsubsection{Irregular classes, finite subcovers and levels}

In this language, following \cite{twcv} Prop. 8, 
an \textit{irregular class} is a locally constant map $\Theta \colon \Ical \to \mathbb{N}$,
assigning an integer to each component of $\Ical$, equal to zero for all but a finite number of circles. It is thus constant on each component circle, i.e. corresponds to a map $\pi_0(\Ical) \to \mathbb{N}$. An irregular class can be written as a formal sum
\[
\Theta=n_1\cir{q_1}+ \dots+ n_m \cir{q_m},
\]
where $n_1,\dots,n_m > 0$ are integers. 
Thus an irregular class $\Th$ is 
just a finite multiset of Stokes circles, or in concrete terms a finite multiset of Galois orbits of exponential factors.
(A more general definition of irregular classes, that works for other structure groups, is in \cite{twcv} \S3.5.)
The {\em rank} of the  irregular class $\Th$ is 
the integer $\Rank(\Theta) := \sum_i n_i \ram(q_i)$.

The (total) ramification $\ram(\Th)$ of an irregular class $\Th=\sum n_iI_i$ is the lowest common multiple of the ramifications $\ram(I_i)$ of the active circles $I_i$ in $\Th$.

By the 
formal meromorphic classification of meromorphic connections
(Fabry, Cope, Hukuhara, Turrittin, Levelt, Jurkat 
\cite[Thm. II]{BJLformal}, Deligne \cite[Thm. IV.2.3]{malg-book})
any connection on a rank $n$ vector bundle on the formal punctured disk determines an irregular class of rank $n$, taking the Galois orbits of the exponents of the exponential factors $e^q$ that occur, repeated according to their multiplicity. 
For example a 
regular singular connection has class $n\<0\>$ just involving the tame circle.

A ``finite subcover'' is a subset $I\subset \mathcal{I}$ such that $I \to \partial$ is a finite cover, i.e. it is finite set of Stokes circles. 
An irregular class determines a finite subcover consisting of the active exponents $I = \Theta^{-1}(\mathbb{N}_{>0})$. Explicitly, if $\Theta=\sum_i n_i I_i$, then $I=\bigsqcup_i I_i$. Thus an irregular class corresponds to the data of a finite subcover $I\subset \Ical$, together with a positive integer $n_i$ for each connected component.

Any rank $n$ irregular class $\Th$ determines another irregular class $\End(\Th)$ of rank $n^2$ (as in \cite{tops} pp.71-72).
The non-zero slopes of the circles in $\End(\Th)$ are the {\em levels} of $\Th$ (\cite{DMRci} p.73, \cite{L-R94} p.858, \cite{BBRS91} (5.2), \cite{MR91}).
Thus
\beq\label{eq: levels}
\Levels(\Th)=\{ \slope(q_\al-q_\be)\st \al,\be\in I_d\}\setminus \{0\}\subset \IQ_{>0}
\eeq
where $I\subset \cI$ is the finite subcover underlying $\Th$ and $I_d$ is any fibre of $I$.
Note that the existence of  irregular classes with multiple levels means there are connections whose formal fundamental solutions are not $k$-summable for any $k$ (and in particular  not Borel summable), and this fact led to the theory of {\em multisummation} (cf. \cite{BBRS91} \S5).

\subsubsection{Irregular types}

In the untwisted case, a distinction was made between irregular types and irregular classes, the difference being that for irregular classes the exponential factors are unordered, whereas for irregular types the order of the exponential factors matters\footnote{In the 
untwisted case an irregular class is the same thing 
as the ``bare irregular type'' determined by an 
irregular type in the sense of \cite{gbs} Rmk. 10.6, 
with ``irregular type'' as defined in \cite{gbs} Defn. 7.1.
(cf. also \cite{smid} Defn. 2.4 in the generic case, which gave a coordinate free approach to \cite{JMU81}).}.
We will now do the same for the twisted case. 
As a first step (as in \cite{tops} \S5.3)
we can just choose an ordering of the circles in an irregular class: 
\begin{defn}
An ``irregular type'' of rank $n$ 
is an ordered list 
$$[(n_1,I_1),(n_2,I_2), \ldots,(n_m,I_m)]$$ 
of distinct Stokes circles $I_i\subset \cI$ each with a
multiplicity $n_i\ge 1$, such that $\sum n_i\Ram(I_i)=n$.
\end{defn}

However here it will be more convenient to work with ordered lists of exponential factors.
Thus in the rest of the article we fix once and for all a direction $d\in \partial$, local coordinate $z=x^{-1}$ and a choice of logarithm around $d$, so that a section of the exponential local system 
$\mathcal{I}$ around the direction $d$ is identified with a 
{Puiseux principal part} $q\in x^{1/r}\C[x^{1/r}]$ for some $r$, as in \eqref{eq: iso to ppp}.

\begin{defn}
A ``full irregular type'' of rank $n$ is a Galois closed ordered list $Q=(q_1,\dots, q_n)$ of 
not necessarily distinct polynomials 
$q_i\in x^{1/r_i}\C[x^{1/r_i}]$ for some 
$r_i\in \IN_{> 0}$. 
\end{defn}

Asking for the list $(q_1,\dots, q_n)$ to be Galois closed is equivalent to ask for it to be closed under the monodromy $\si$ of the exponential local system $\Ical$. Explicitly, the list $Q=(q_1,\dots, q_n)$ is Galois closed if there exists a permutation $\wh\sigma\in \Sym_n$ 
such that 
$\si(q_i)=q_{\wh\sigma(i)}$ for $i=1,\dots, n$.

More intrinsically, given a rank $n$ irregular class $\Th$ and a direction $d$ then $\Th$ determines a length $n$ multiset in $\cI_d$, and the full irregular types determining $\Th$ correspond exactly to the $n!$ possible orderings of this multiset. 
If $Q$ is a full irregular type, let $I_1, \dots, I_m$ be the set of \textit{distinct} Galois orbits of the elements of the list $(q_1,\dots, q_n)$ and $n_i$, $i=1,\dots,m$ the multiplicity of $I_i$, i.e. the number of times that each element of $(I_i)_d$ appears in the list. Then the irregular class associated to $Q$ is 
$\Theta=n_1 I_1 +\dots n_m I_m$, and we will write 
$\Th=[Q]$ for the class of $Q$. 
In particular $n=\Rank(\Th)=\sum n_i\Ram(I_i)$.
In these terms, for any irregular class $\Theta$, the set of full irregular types determining $\Th$ corresponds to all possible orderings of the elements of the list $(q_1,\dots, q_n)$ of one full irregular type determining $\Theta$.

It will also be useful to introduce a variant of these definitions, and consider a specific subset of full irregular types, to account for the fact that the Galois orbits are already naturally cyclically ordered.

\begin{defn}\label{defn: pit}
A ``\mit{}'' is an ordered list 
$$Q=[(n_1,q_1),\ldots,(n_m,q_m)]$$ 
where $n_i\in \IN_{>0}$,
and the $q_i$ are 
Puiseux principal parts lying in distinct Galois orbits. 
\end{defn}

We identify a rank $n$ \mit{} $Q$ as a full irregular type 
as follows: 
$$
Q=(
\underbrace{q_{1}, \dots, q_{1}}_{n_1 \text{ times}},
\underbrace{\si(q_{1}), \dots, \si(q_{1})}_{n_1 \text{ times}},
\dots, 
\underbrace{\si^{r_1-1}(q_{1}), \dots, 
 \si^{r_1-1}(q_{1})}_{n_1 \text{ times}},$$
$$\qquad\qquad\qquad\qquad\dots,\dots,\qquad\qquad \dots, \dots, \dots,
\underbrace{\si^{r_m-1}(q_{m}), \dots, 
\si^{r_m-1}(q_{m})}_{n_m \text{ times}}
)
$$

\noindent
where $r_i=\ram(q_i)$. 
Note that (more intrinsically) a rank $n$ \mit{} 
is equivalent to an ordered list 
$$[(n_1,p_1,I_1),\ldots,(n_m,p_m,I_m)]$$ 
where $n_i\in \IN_{>0}$, the $I_i$ are 
distinct Stokes circles, $p_i\in (I_i)_d$
is a point of $I_i$ lying over $d$,
and $n=\sum n_i\ram(I_i)$.
The correspondence is given by taking $q_i$ to be the 
Puiseux principal part determined by 
the point $p_i\in (I_i)_d$ via the logarithm choice.

Finally observe that the notion of pointed irregular type introduces
extra discrete invariants that we do not 
care about
(for example $\slope(q_1-q_2)$ may vary if $q_1$ is moved in its Galois orbit). 
To avoid this we define a notion of ``compatibility'' between the chosen exponential factors in different Galois orbits. 
For any $k\in \IQ$ let 
\beq\label{eq: truncation}
\tau_k\colon\cI_d\to \cI_d, \qquad q=\sum a_ix^{k_i}\longmapsto 
\tau_k(q)  :=  \sum_{k_i\ge k} a_ix^{k_i}
\eeq
be the truncation map, discarding all monomials of slope $<k$.
\begin{defn}\label{defn: compatible pit}
A \mit{} $Q=[(n_1,q_1),\dots, (n_m,q_m)]$ is \textit{compatible} if for each possible exponent $k\in \IQ_{>0}$, and any indices $i,j$, 
$$\cir{\tau_k({q_i})}=\cir{\tau_k({q_j})} \quad\Longrightarrow\quad 
\tau_k({q_i})=\tau_k({q_j}).$$
In other words: if the truncations are in the same Galois orbits, then the truncations are equal.
\end{defn}

It is straightforward to see that for any irregular class $\Theta$, a compatible (pointed) irregular type $Q$ exists with class $\Th=[Q]$. 
Up to isomorphism the configuration space 
$\pureconfig(Q)$ will not depend on the choice of irregular type with irregular class $\Theta$, so we may assume without loss of generality that the pointed irregular types we are considering are compatible.

\subsubsection{Pullback to untwisted case}

The notions of (twisted) irregular classes and irregular types can easily 
be related to the corresponding untwisted notions, by passing to a finite cyclic cover. Explicitly, let $Q=(q_1,\dots, q_n)$ be any full irregular type. 
Let $r$ be an integer multiple of $\Ram(Q)$
so that $\ram(q_i)$ divides $r$ for all $i$.
Introduce the variable $t$ such that $t^r=x$ (so $t^{-1}$ is a coordinate on a cyclic $r$-fold ramified cover). 
Let $\wh{q}_i\in t\C[t]$ be 
$q_i$ seen as a polynomial in $t$. 
Then $\wh{Q} := \diag(\wh{q}_1,\dots,\wh{q}_n)$ is an untwisted irregular type associated to $\Theta$. Its (untwisted) 
irregular class $\wh{\Theta}=[\wh Q]$ only depends on $\Theta$
and is simply the pullback. 
Notice that the irregular class $\wh{\Theta}$ is invariant under the action of $\Z/r\Z$ on the set of untwisted irregular classes obtained by replacing all polynomials 
$\wh{q}_i(t)\in \C[t]$ by $\wh{q}_i(e^{2\sqrt{-1}k\pi/r}t)$, for any integer $k$: we say that the untwisted irregular class $\wh{\Theta}$ is $r$-Galois closed. Conversely, if $\wh{Q}$ is an $r$--Galois closed untwisted irregular type, it defines a (twisted) irregular type $Q$ such that the ramification orders of all exponential factors divide $r$. The (twisted) irregular class of $Q$ only depends on the (untwisted) irregular class of 
$\wh{Q}$.

\subsection{Admissible deformations}

The notion of admissible deformations was
defined in \cite{gbs} for arbitrary 
untwisted meromorphic connections in the context of 
any reductive group $G$, extending 
the generic case in \cite{JMU81, Mal-imd2long}  for $\GL_n(\IC)$ 
and in \cite{bafi} for other $G$. 
It can be extended
to the twisted setting simply by saying that a family of irregular classes is an admissible deformation if and only if some (and hence any) cyclic pullback to the untwisted case is 
an admissible deformation.  
In more detail this works out as follows.

Fix $\cI$ as above and let 
$\IB$ be a connected complex manifold.
Choose a rank $n$ irregular class 
$\Th_b$ on $\cI$ for each $b\in \IB$, thus defining a (set theoretic) map
$$\phi\colon\IB\longrightarrow \operatorname{IC}_n(\cI), \qquad b\longmapsto \Th_b$$
to the set  $\operatorname{IC}_n(\cI)$ of rank $n$ irregular classes, i.e. length $n$ multisets in $\pi_0(\cI)$. 
We will define when this collection of classes 
is a (holomorphic) admissible deformation.

Note that since the rank $n$ is fixed the total 
ramification $\ram(\Th_b)$ is uniformly bounded\footnote{
In fact since $n=\sum n_i\ram(I_i)$ it is bounded by the largest possible lowest common multiple of the elements of any integer partition of $n$; this is known as Landau's function $g(n)$, whose first 10000 values are listed at 
\url{https://oeis.org/A000793}, for example 
$g(5)=6=\lcm(2,3), g(7)=12=\lcm(3,4)$.} 
on $\IB$, for example by $n!$. 
Thus we can choose an integer $N$ and set $x=t^N$ so that (in terms of $t$) $\Th_b$ is a family of untwisted irregular classes, i.e. a multiset in $t\IC[t]$ of length $n$, for each $b\in \IB$.

By definition (see \cite{gbs} Rmk.10.6) 
this is an admissible deformation if it can locally be represented as $\Th_b=[Q_b]$ in terms of an admissible family of untwisted irregular types 
$Q_b=(q_1,\ldots, q_n)$ with $q_i\in t\IC[t]$ 
dependent on $b$.
Finally (by \cite{gbs} Defn. 10.1) this is a (holomorphic) admissible deformation if each $q_i$ varies holomorphically with $b$ and the degree of the polynomial 
$q_i-q_j\in t\IC[t]$ is constant (independent of $b$) 
for each $i\neq j$ (the degree is an integer $\ge 0$).
Similarly one can define smooth admissible deformations etc by allowing  the coefficients of the 
$q_i$ to vary smoothly rather than holomorphically etc. 
This leads to the following more direct definition, by noting that the slope multiplied by $N$ gives
 the degree in $t$ when pulled back, upstairs.

\begin{defn}\label{defn: hol adm defmn}

\noindent
$\bullet$ A holomorphic family $Q_b=(q_1,\ldots,q_n)$ of full irregular types (with $q_i\in \cI_d$) 
is an admissible deformation if 
\begin{equation}\label{eq: adm condn}
\boxed {\vphantom{\Bigl\vert}
\slope(q_i-q_j) \text{ is independent of $b$ for all $i,j$.}
}\end{equation}

\noindent
$\bullet$ 
The family $\Th_b$ of irregular classes is a holomorphic admissible deformation if 
it can locally be represented as $\Th_b=[Q_b]$ for 
a holomorphic family of full irregular types 
$Q_b=(q_1,\ldots,q_n)$ with $q_i\in \cI_d$, 
varying admissibly, i.e. satisfying \eqref{eq: adm condn}.
\end{defn}

If $\Theta$ and $\Theta'$ are two rank $n$ irregular classes, we say that $\Th'$ is an admissible deformation of $\Th$ if there exists an admissible deformation $(\Theta_b)$ indexed by some connected manifold $\IB$ equipped with two points $b_1,b_2\in \IB$ such that such that 
$\Theta_{b_1}=\Th$ and $\Theta_{b_2}=\Th'$.
Similarly, if $Q$ and $Q'$ are two rank $n$ full irregular types, we will say that $Q'$ is an admissible deformation of $Q$, and we write $Q\simeq Q'$, if 
they are two values of an admissible family of full irregular types.
A continuity argument shows the Galois orbits in a full irregular type do not change under a holomorphic admissible deformation,
i.e. the same permutation 
$\wh \si$ works throughout the deformation (in particular the ramification indices of the Stokes circles are constant).

\begin{eg} Let us consider the holomorphic family of  exponential factors
$q(b)=x^{1/2}+x^{1/3}+bx^{1/6}$, which has ramification $6$ for any $b\in \mathbb{C}$. The first few Galois conjugates of $q$ are: 
\begin{align*}
q=q_0(b)&=x^{3/6}+x^{2/6}+bx^{1/6}\\
\si(q)=q_1(b)&=\varepsilon^3 x^{3/6}+\varepsilon^2 x^{2/6}+b\varepsilon x^{1/6}\\
\si^2(q)=q_2(b)&= x^{3/6}+\varepsilon^4 x^{2/6}+b\varepsilon^2 x^{1/6}
\end{align*}
\noindent 
where $\varepsilon=\exp(-\pi i/3)$.
Considering $\slope(q_i-q_j)$ for $i,j=0,\ldots,5$ shows that $\Th_b=\<q\>$ is an admissible deformation over 
$\IB=\IC$.
Observe that for $b=0$ we have $\ram(q_0-q_2)=3$, 
but it is $6$ for $b\neq 0$
so not everything behaves continuously.
\end{eg}

\subsubsection{Numerical equivalence of irregular types}

We will try to guess a simple numerical criterion 
for (pointed) irregular types to be admissible deformations
of each other.
To this end consider the following relation.

\begin{defn}
Let $Q=[(n_1,q_1),\dots, (n_m,q_m)]$ be a \mit{}. 
If $Q'=[(n'_1,q'_1),\dots, (n'_p,q'_p)]$ with each
$q_i'\in \cI_d$ a Puiseux polar part, 
then we say that $Q,Q'$ are ``numerically equivalent'', and write
\beq\label{eq: numericalsim}
Q'\sim Q,
\eeq
if $p=m$, $n'_i=n_i$ ($i=1,\ldots,m$), and 
\beq\label{eq: numericalequiv}
\slope(\si^k (q'_{i})-\si^l (q'_{j}))=
\slope(\si^k(q_{i})-\si^l(q_{j})),
\eeq
for  all $i,j,k,l$ with 
$1\leq i,j\leq m$, $0\leq k\leq \ram(q_i)$, $0\leq l\leq \ram(q_j)$, where $\si$ is the Galois action. 
\end{defn}

\begin{lem}
If $Q'\sim Q$ as above, then $Q'$ is a \mit{} and moreover 
$\ram(q'_i)=\ram(q_i)$ for all $i$. 
\end{lem}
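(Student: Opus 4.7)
The plan is to prove the two assertions of the lemma---that the $q'_i$ lie in pairwise distinct Galois orbits, and that $\ram(q'_i)=\ram(q_i)$---by feeding the numerical equivalence $Q\sim Q'$ into the characterisation of the ramification as the size of the $\si$-orbit of an exponent.

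First I would establish that $Q'$ is a \mit{}, i.e.\ that the $q'_i$ lie in distinct Galois orbits. Fix $i\neq j$. Since $Q$ is a \mit{}, $q_i$ and $q_j$ lie in distinct orbits, so $\si^k(q_i)\neq \si^l(q_j)$ for all $0\leq k\leq \ram(q_i)$ and $0\leq l\leq \ram(q_j)$. Each difference $\si^k(q_i)-\si^l(q_j)$ is then a nonzero Puiseux polar part, with well-defined slope in $\IQ_{>0}$. The relation \eqref{eq: numericalequiv} forces $\slope(\si^k(q'_i)-\si^l(q'_j))$ to be the same positive rational, so in particular $\si^k(q'_i)\neq \si^l(q'_j)$; letting $k,l$ vary over the full ranges shows the $\si$-orbits of $q'_i$ and $q'_j$ are disjoint.

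Second I would show $\ram(q'_i)=\ram(q_i)$. Set $r_i := \ram(q_i)$ and apply \eqref{eq: numericalequiv} with $j=i$, $l=0$, for $k=1,\dots, r_i$. For $0<k<r_i$, the definition of $r_i$ as the orbit size gives $\si^k(q_i)\neq q_i$, so the slope of the difference is a well-defined positive rational, and $\sim$ transports this to $\si^k(q'_i)\neq q'_i$. For $k=r_i$ the difference $\si^{r_i}(q_i)-q_i$ vanishes; the equivalence then forces $\si^{r_i}(q'_i)=q'_i$. Together these pin the $\si$-orbit of $q'_i$ down to size exactly $r_i$, so $\ram(q'_i)=r_i$. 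In particular $Q'$ is genuinely well-formed as a list of Puiseux polar parts with the same ramification profile as $Q$.

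The main (and only real) obstacle is a bookkeeping one: the equivalence $\sim$ must be read as carrying the information that one side of the slope equation vanishes if and only if the other does---for instance by declaring $\slope(0) := -\infty$ to be strictly less than every positive rational. Without this convention the case $k=r_i$ above has no content, and the lemma would fail for trivial reasons. Once this is made explicit, both assertions follow directly from the definition of $\ram$ as the length of the Galois orbit under $\si$, with no further computation.
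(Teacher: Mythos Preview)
Your proof is essentially correct and follows the same approach as the paper, but the two parts are in the wrong order. Your first step (distinct Galois orbits) concludes that the $\si$-orbits of $q'_i$ and $q'_j$ are disjoint from the inequalities $\si^k(q'_i)\neq\si^l(q'_j)$ for $0\le k\le\ram(q_i)$, $0\le l\le\ram(q_j)$; but these ranges only exhaust the orbits of $q'_i,q'_j$ once you know $\ram(q'_i)\le\ram(q_i)$ and $\ram(q'_j)\le\ram(q_j)$, which you have not yet established at that point. The paper proves the ramification equality first (exactly as in your second paragraph) and then deduces that the orbits are distinct; simply swapping your two paragraphs removes the gap.
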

\pf
Taking $j=i$, $k=\ram(q_i)$, $l=0$ shows that 
$\ram(q'_i)\le\ram(q_i)$.
Thus $\ram(q'_i)=\ram(q_i)$
since if $\ram(q'_i)<\ram(q_i)$
then there would be some identification
amongst the list $\si^k(q'_i)$, $k=1,2,\ldots,\ram(q_i)$,
but this is not possible as the differences of the 
slopes matches that of the list
$\si^k(q_i)$, $k=1,2,\ldots$.
Then the fact that $Q'$ is a \mit{}, i.e. its $m$ Galois 
orbits are distinct, follows from the fact that $Q$ is a \mit{}, so none of the slopes between two Galois orbits vanishes.
\epf

This implies $\sim$ is an equivalence relation when restricted to pairs of \mits{}.
We will eventually see (Cor. \ref{cor: trees and adm defmns of types }) that for compatible \mits{}  it is the same as the relation given by admissible deformation.

Thus it seems we should consider the simple numerical  condition  
\eqref{eq: numericalequiv} applied  blindly to lists of 
Puiseux polar parts. This leads to the following configuration spaces.

\begin{remark}
Note that if we just impose that $Q'=[(n_1,q'_1),\dots, (n_m,q'_m)]$ 
is a rank $n$ \mit{} and the apparently weaker condition that 
\eqref{eq: numericalequiv} holds just for 
$1\leq i,j\leq m$, $0\leq k\leq \ram(q_i)-1$, $0\leq l\leq \ram(q_j)-1$,
then it follows that $Q'\sim Q$
(because this implies $\ram(q_i')\ge \ram(q_i)$ and then the condition to have rank $n$ implies $\ram(q_i')=\ram(q_i)$ for all $i$).
\end{remark}

\subsubsection{Configuration spaces}

We will define a \textit{configuration space} for each given (pointed) irregular type 
$Q=[(n_1,q_1),\dots, (n_m,q_m)]$, 
and later see it contains all the admissible deformations with bounded slope. 
Let $r=\Ram(Q) = \lcm \{\ram(q_i)\}$  
be the total ramification of $Q$ and let 
$$K=\Katz(Q) := \max(\slope(q_1),\ldots,\slope(q_m))$$
be the largest slope, which is 
essentially the Poincar\'e--Katz rank of $Q$ 
(cf. Poincar\'e \cite{poincare1886} p.305, Katz 
\cite{katz1970} 11.9.7).
Thus  all the $q_i$ can be expressed as polynomials 
in $t :=  x^{1/r}$ of degree at most $s :=  rK$.

Clearly any \mit{} with the same number $m$ of terms, 
the same multiplicities and ramifications, and that has Poincar\'e--Katz rank $\le K$, will be of the form:
\beq\label{eq: naivecoeffs}
Q_\mathbf{a} := 
[
(n_1,\sum_{j=1}^s a_{1,j}t^{j}), \dots,
(n_m ,\sum_{j=1}^s a_{m,j}t^{j})],\qquad t=x^{1/r}
\eeq 
for some unique collection of coefficients 
$\mathbf{a}=(a_{i,j})\in \C^{ms}.$
This motivates the following definition.

\begin{defn}\label{defn: config space}
Suppose $Q=[(n_1,q_1),\dots, (n_m,q_m)]$ is a \mit{}
and $r=\ram(Q), K=\Katz(Q), s=rK$. 
The configuration space of $Q$ with bounded Poincar\'e--Katz rank is the topological space  $\pureconfig(Q)$  defined by
\begin{equation}
\pureconfig(Q) := \left\{ \mathbf{a}=(a_{i,j})\in \C^{ms}  \st Q_{\mathbf{a}} \sim Q \right\},
\end{equation}
with its topology being the one induced from the usual topology of $\C^{ms}$, where $\sim$ is from \eqref{eq: numericalsim}.
\end{defn}

We will show below (Cor. \ref{cor: config space as fine modspace }) that $\pureconfig(Q)$ is a fine moduli space of all admissible deformations (with Poincar\'e--Katz rank $\le \Katz(Q)$) of the pointed irregular type $Q$. 
In the remainder of the article, our goal will then be to explicitly describe $\pureconfig(Q)$ and compute its fundamental group.  
The restriction about having bounded slopes is for the sake of convenience, since it allows us to deal with a finite number of coefficients. As was already the case \cite{doucot2022local, doucotrembado2023topology} for the untwisted situation, this entails no loss of generality as far as the topology of $\pureconfig(Q)$ is concerned: up to homotopy equivalence $\pureconfig(Q)$ does not change if we allow for coefficients associated to higher exponents.

Notice that if $Q_1$ and $Q_2$ are two \mits{} corresponding to the same irregular class $\Theta$, the spaces $\pureconfig(Q_1)$ and $\pureconfig(Q_2)$ are homeomorphic, an homeomorphism being given by permuting the active circles and shifting cyclically the distinguished representative of each Galois orbit by the appropriate amount. With a slight abuse of language, we may thus speak of the configuration  space 
$\pureconfig(\Theta)$ that is well--defined up to homeomorphism.

Similarly we define a configuration space of trace-free \mits{}.
First define the {\em trace} of a full irregular type $Q=(q_1,\ldots,q_n)$ 
to be  $\Tr(Q)=\sum_1^n q_i\in \cI_d$. 

\begin{defn}
Suppose $Q=[(n_1,q_1),\dots, (n_m,q_m)]$ is a \mit{}
and $r=\ram(Q), K=\Katz(Q), s=rK$. 
The traceless (or {\em special}) configuration space of $Q$ is the topological space  
$\mathbf{SB}(Q)$  defined by
\begin{equation}\label{eq: SB}
\mathbf{SB}(Q) := \left\{ \mathbf{a}=(a_{i,j})\in \C^{ms}  \st Q_{\mathbf{a}} \sim Q, \Tr(Q_{\mathbf{a}})=0 \right\},
\end{equation}
with its topology being the one induced from the usual topology of $\C^{ms}$, where $\sim$ is from \eqref{eq: numericalsim}.
\end{defn}

If $Q=[(n_1, q)]$ just has one Galois orbit then 
$\Tr(Q)=n_1\sum_{i=1}^{\ram(q)}\si^i(q)$. 
In turn, since roots of unity sum to zero, this equals 
$\Tr(Q)=n_1\Ram(q)\pi_{un}(q)$ where 
$\pi_{un}\colon\cI_d\to x\IC[x]$ is the linear map picking out the unramified monomials in $q$, so that $\pi_{un}(x^{k}) = x^k$ if $k\in \IN$ and 
$\pi_{un}(x^{k}) = 0$ otherwise.
It follows that the trace of any irregular type 
lies in the unramified part $x\IC[x]\subset \cI_d$.
Further there is a projection $\pr$:
$$Q=(q_1,\ldots,q_n)\mapsto \pr(Q)=Q-\frac{1}{n}\Tr(Q) = (q_1-\frac{1}{n}\Tr(Q),\ldots,q_n-\frac{1}{n}\Tr(Q))$$
mapping any full irregular type to a trace-free irregular type.
In particular it makes no difference if we replace $Q$ by its trace-free projection in the definition \eqref{eq: SB}, and there are maps
\beq
\pureconfig(Q)\ \onto\ \mathbf{SB}(Q)\ \into\ \pureconfig(Q)
\eeq
where the first map is $\pr$ and the second is the natural inclusion.
We will see below (Cor. \ref{cor: config space as fine modspace }) that $\mathbf{SB}(Q)$ is a fine moduli space of all trace-free admissible deformations of the pointed irregular type $\pr(Q)$.
We will also show that $\mathbf{SB}(Q)$ and $\pureconfig(Q)$ are finite-dimensional complex algebraic 
manifolds (Zariski open in a complex vector space). 
Admitting this   temporarily, one can already 
observe that the dimensions will differ
 by the integer part of the Poincar\'e--Katz rank  and they are homotopy equivalent:
\begin{lem}
For any pointed irregular type $Q$, the 
configuration spaces $\mathbf{SB}(Q)$, $\pureconfig(Q)$
are homotopy equivalent, and
$$\dim(\mathbf{SB}(Q)) = \dim(\pureconfig(Q))-\floor{\Katz(Q)}.$$
\end{lem}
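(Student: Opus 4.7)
My approach will rest on two elementary observations about the interaction between the configuration space and translation by unramified polynomials. First, if $p \in x\IC[x]$ is any (Galois invariant) unramified polynomial and $Q_\mathbf{a} = [(n_1, q_1'), \ldots, (n_m, q_m')] \in \pureconfig(Q)$, then the translated type $Q_\mathbf{a} + p := [(n_1, q_1' + p), \ldots, (n_m, q_m' + p)]$ is again a \mit{} with the same Galois orbit structure, and every relevant difference is preserved: $\sigma^k(q_i' + p) - \sigma^l(q_j' + p) = \sigma^k(q_i') - \sigma^l(q_j')$, so numerical equivalence with $Q$ is preserved whenever $\deg p \le \Katz(Q)$. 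Second, as observed just before the lemma, the trace $\Tr(Q_\mathbf{a})$ lies in $x\IC[x]$, and its degree as a polynomial in $x$ is at most $\lfloor \Katz(Q) \rfloor$ (since only integer powers $\le \Katz(Q)$ can occur in the trace).

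Given these, I would first establish the homotopy equivalence by an explicit strong deformation retraction. Let $n = \Rank(Q)$ and define
\[
H \colon \pureconfig(Q) \times [0,1] \longrightarrow \pureconfig(Q), \qquad H(Q_\mathbf{a}, t) = Q_\mathbf{a} - \tfrac{t}{n}\Tr(Q_\mathbf{a}),
\]
where the subtraction is componentwise. By the first observation, $H(Q_\mathbf{a}, t)$ stays in $\pureconfig(Q)$ for all $t$, since $\tfrac{t}{n}\Tr(Q_\mathbf{a}) \in x\IC[x]$ has degree $\le \lfloor \Katz(Q)\rfloor$. At $t = 0$ it is the identity, at $t = 1$ it equals the projection $\pr \colon \pureconfig(Q) \onto \mathbf{SB}(Q)$, and it restricts to the identity on $\mathbf{SB}(Q)$ at every $t$ (since $\Tr$ vanishes there). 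Hence $\pureconfig(Q)$ deformation-retracts onto $\mathbf{SB}(Q)$, which yields the homotopy equivalence.

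For the dimension statement, I would observe that the trace assembles into an affine map
\[
\Tr \colon \pureconfig(Q) \longrightarrow \bigoplus_{k=1}^{\lfloor \Katz(Q)\rfloor} \IC \cdot x^k,
\]
which is surjective: given any target polynomial $p$, translating $Q_\mathbf{a}$ by $p/n$ stays in $\pureconfig(Q)$ and shifts the trace by exactly $p$, so every value is attained. Since $\mathbf{SB}(Q) = \Tr^{-1}(0)$ and the map is linear in the coordinates $\mathbf{a}$, granting the smoothness of $\pureconfig(Q)$ and $\mathbf{SB}(Q)$ (which is the fact the lemma explicitly admits), this realises $\mathbf{SB}(Q)$ as a smooth submanifold of codimension equal to the dimension of the target, namely $\lfloor \Katz(Q) \rfloor$, giving the dimension formula. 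I do not expect any substantive obstacle here: the only point requiring attention is to check that componentwise translation by unramified $p$ is well defined on \mits{} (which is clear, as Galois orbits and their distinguished representatives shift uniformly), and that the numerical equivalence relation is insensitive to such translations, which is the content of the first observation above.
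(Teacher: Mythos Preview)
Your proof is correct and follows essentially the same approach as the paper's: both identify that translation by unramified polynomials $q \in x\IC[x]$ of slope $\le \Katz(Q)$ acts on $\pureconfig(Q)$ preserving numerical equivalence, that the fibres of $\pr$ are exactly these translation orbits, and that this yields both the retraction and the codimension count. The paper's proof is a terse three sentences; you have spelled out the deformation retraction $H$ and the surjectivity of $\Tr$ explicitly, which is a welcome elaboration but not a different argument.
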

\pf
Two elements are in the fibre of the map 
$\pr\colon\pureconfig(Q)\onto\mathbf{SB}(Q)$
if and only if they differ by the operation 
$(q_1,\ldots,q_n)\mapsto (q_1-q,\ldots,q_n-q)$
for some $q\in x\IC[x]$ of slope $\le K$.
The dimension of the space of such polynomials $q$ is 
$\floor{\Katz(Q)}$, and this gives a retraction onto 
$\mathbf{SB}(Q)$.
\epf

\begin{remark}
Isomonodromic deformations  of a special class of 
twisted irregular connections
were 
considered in \cite{bertola-mo},
under a genericity condition 
(so that the sizes of the Galois orbits 
are controlled by the 
Jordan blocks of the leading coefficient). 
The relation between  our general admissibility condition 
and the Lidskii conditions in \cite{bertola-mo}, 
specific to their setting, 
are not immediately clear to us.
\end{remark}

\section{Classification of admissible deformations}\label{sn: classn}

Since an essential difference in the twisted case compared to the untwisted one is that one has to consider differences between different branches of the same exponential factor, it is worth investigating first what the admissible deformations are in the case of an irregular type corresponding to an irregular class with only one active circle.

\subsection{A single Stokes circle} 
Suppose $I=\<q\>\subset \cI$ is a single Stokes circle.
Recall from \eqref{eq: levels}  that the {\em levels} of $I$ are the non-zero slopes of $\End(I)$, so that, for any $d\in \partial$:
$$\levels(I)=\{\slope(q_\al-q_\be)\st \al,\be\in I_d\}\setminus\{0\}$$
where $q_\al\colon\Sect_d\to \IC$ is the function determined by 
$\al\in I_d\subset \cI_d$. 
The set $\levels(I)$ 
is a finite, possibly empty, subset of $\IQ_{>0}$.
Suppose there are $m$ levels and write
$$\levels(I) = (k_1>k_2>\cdots >k_m)\subset \IQ_{>0}.$$

The key classification statement is as follows.

\begin{prop}\label{prop: one circle classn}

a) Two Stokes circles $I,J\subset \cI$ are admissible deformations of each other if and only if $\levels(I)=\levels(J)\subset \IQ$.

b) A subset $(k_1>k_2>\cdots >k_m)\subset \IQ_{>0}$ is the set of levels of some circle $I\subset \cI$ if and only if 
\begin{equation}\label{eq: condition on levels of a circle}
\text{$k_1,k_2,\ldots,k_m$ have strictly increasing common denominators $>1$.}
\end{equation}
In other words if $d_i$ is the denominator of $k_i$ (in lowest terms) and 
\beq
r_i\text{ is the lowest common multiple of }d_1,d_2,\ldots,d_i
\eeq
for each $i$ (so that $r_i\!\st\! r_{i+1}$), then $1<r_1<r_2<\cdots <r_m$.
\end{prop}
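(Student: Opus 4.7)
The plan is to prove both statements by explicitly computing the set of levels of $\cir{q}$ in terms of the coefficient pattern of $q$. Write $q=\sum_{j\in S}a_j x^{j/r}$ in lowest ramification (so $a_j\neq 0$ for $j\in S$ and $r=\ram(q)$), and set $\omega=\exp(-2\sqrt{-1}\pi/r)$. A direct computation gives
\[
\sigma^k(q)-q=\sum_{j\in S}a_j(\omega^{jk}-1)x^{j/r},
\]
whose slope depends on $k$ only through $d=\gcd(k,r)$: setting $e_j := r/\gcd(j,r)$ (the ``effective denominator'' of the monomial $x^{j/r}$), one checks that $\omega^{jk}=1$ iff $e_j\mid d$, and hence $\slope(\sigma^k(q)-q)=L_d := \max\{j/r:j\in S,\ e_j\nmid d\}$. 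As $k$ runs through $\{1,\dots,r-1\}$, $d$ runs through all proper divisors of $r$, so $\Levels(\cir{q})=\{L_d:d\mid r,\ d<r\}$. Part (a) ``only if'' is then immediate from the admissibility condition \eqref{eq: adm condn}.

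Next I would organise $S$ combinatorially. Order the ramified $j\in S$ (those with $e_j>1$) by decreasing slope as $j_1>j_2>\cdots>j_p$, and define cumulative LCMs $R_0=1$, $R_i=\lcm(R_{i-1},e_{j_i})$; the identity $R_p=r$ follows from the minimality of $r$. The condition ``$e_{j_1}\mid d,\dots,e_{j_{i-1}}\mid d$'' is equivalent to ``$R_{i-1}\mid d$'', so $L_d=j_i/r$ where $i=i(d)$ is the smallest index with $R_i\nmid d$. Such an $i$ is necessarily a ``jump'', i.e.\ lies in $\Jcal := \{l:R_l>R_{l-1}\}=\{i_1<\cdots<i_m\}$; and conversely $d:=R_{i_{l-1}}$ realises $L_d=j_{i_l}/r=:k_l$, since the strict chain $R_{i_{l-1}}<R_{i_l}$ forces $R_{i_l}\nmid R_{i_{l-1}}$. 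So the set of levels coincides with $\{k_1,\dots,k_m\}$, the denominator of $k_l$ in lowest terms is $d_l=e_{j_{i_l}}$, and $\lcm(d_1,\dots,d_l)=R_{i_l}$. The strict chain $1=R_0<R_{i_1}<\cdots<R_{i_m}=r$ then translates verbatim into the required $1<r_1<\cdots<r_m$, proving (b) ``only if''.

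For (b) ``if'', given levels $k_1>\cdots>k_m$ with the stated denominator property, I would explicitly construct $q_0 := \sum_{l=1}^m x^{k_l}$ with $r := r_m$: writing each $k_l=p_l/d_l$ in lowest terms, the monomial $x^{k_l}$ has effective denominator $e_{j_l}=d_l$, and the hypothesis makes every index a jump, so by the analysis above $\Levels(\cir{q_0})=\{k_1,\dots,k_m\}$. For (a) ``if'', given $I=\cir{q}$ and $J=\cir{q'}$ with the same levels, the same analysis forces $\ram(q)=\ram(q')=r$ and the essential indices $\{j_{i_l}=p_l r/d_l\}$ to agree. I would then construct a three-stage holomorphic admissible deformation from $q$ to $q'$: (i) linearly contract the inessential coefficients of $q$ to zero; (ii) interpolate the essential coefficients from those of $q$ to those of $q'$ along a holomorphic path in $(\IC^*)^m$; (iii) expand zero to the inessential coefficients of $q'$.

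The main obstacle will be verifying that these inessential motions are admissible, i.e.\ that varying or zeroing inessential coefficients never changes any $L_d$. This reduces to the key lemma: for every proper divisor $d$ of $r$, the maximum defining $L_d$ is attained at an essential monomial. This in turn follows because if $j_i$ is inessential then $e_{j_i}\mid R_{i-1}$, so $e_{j_i}\nmid d$ forces $R_{i-1}\nmid d$, which produces a jump index $l<i$ with $e_{j_l}\nmid d$ and $j_l>j_i$. Once this invariance is in place, the construction of the admissible path is routine and the proposition follows.
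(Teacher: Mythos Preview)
Your proof is correct and complete. Both you and the paper arrive at the same core strategy—identify ``essential'' monomials (those at the levels $k_1,\dots,k_m$) versus ``inessential'' ones, show the inessential coefficients can be freely varied without changing any slope, and thereby deform any $q$ to the canonical witness $\sum_l x^{k_l}$—but the packaging differs. The paper organises everything via the naive fission tree: it defines the sets $N_k$ of truncated Galois conjugates, observes that the tree branches precisely at the levels, and then asserts (somewhat informally) that adding a monomial $\alpha x^k$ with $k_{i+1}<k<k_i$ and $r_ik\in\IN$ ``clearly gives an admissible deformation since it does not change any of the Galois orbits''. You instead compute $\slope(\sigma^k(q)-q)=L_d$ directly in terms of effective denominators $e_j$ and the divisor $d=\gcd(k,r)$, and then prove the invariance of $L_d$ under inessential motions by the clean combinatorial lemma that every $L_d$ is attained at a jump index. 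Your route is more algebraic and makes the admissibility of the inessential deformations fully explicit, at the cost of not introducing the tree picture that the paper reuses heavily later; the paper's route is more visual and sets up language for \S\ref{ssn: fission trees}, but is less self-contained at exactly the point your key lemma addresses.
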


\pf
Consider $I=\<q\>$ and let $r=\ram(q)$.
Choose a local coordinate $z$ vanishing at $0$, set $x=1/z$
and suppose $x=t^r$.
Then $q=\sum_{i=1}^n \al_i t^{n_i}$ is a polynomial in $t$
with each $\al_i$ non-zero and $n_1>n_2>\cdots>n_n\subset \IN$.
Let $r_0=1$ and let
$$r_1<r_2<\cdots<r_m=r$$ 
be the set of distinct leading common denominators $>1$ that occur, 
i.e. the distinct numbers $>1$ in the set
$\{\ram(\al_1 t^{n_1}+\cdots \al_i t^{n_i})\st i=1,2,\ldots,n\}$. 
Recall that if $b_i$ is the denominator 
of $n_i/r=\slope(\al_it^{n_i})$,
then 
$\ram(\al_1 t^{n_1}+\cdots \al_i t^{n_i})=\lcm(b_1,b_2,\ldots,b_i)$.  
Finally, for $i=1,\ldots,m$,
let $k_i\in \IQ_{>0}$ be the largest exponent 
such that the ramification is $r_i$, i.e.
$$k_i = \max\{n_j/r\st 
\ram(\al_1 t^{n_1}+\cdots + \al_j t^{n_j})=r_i\}.$$

Then we claim that the levels of $I$ are these numbers 
$k_1>k_2>\cdots >k_m$.
This is an exercise, that can be done visibly by drawing a picture, as  follows\footnote{Beware the ``naive/full fission tree'' defined here 
will be represented by a single full branch 
of the 
precise fission trees to be defined carefully 
in \S\ref{ssn: fission trees} below.
In brief,   
in the untwisted case fission trees were defined from meromorphic connections   
in the quiver modularity conjecture 
in \cite{rsode} Appx. C (i.e. Hiroe--Yamakawa's theorem \cite{hi-ya-nslcase, yamakawa20}), and
the ``naive/full fission tree'' here is defined similarly to those fission trees, after pulling back to the untwisted case.
In turn the untwisted fission trees of 
\cite{rsode} are essentially the same as the untwisted special case of the fission trees of \S\ref{ssn: fission trees}.}:

For any $k\in \IR_{\ge 0}$ define 
$q_k=\sum \al_i t^{n_i}$,
where the sum is over the indices $i$ such that $n_i/r> k$.
Thus $q_k$ is the leading piece of $q$ whose monomials have slope $> k$.
Now for each $k$ consider the finite set
\beq\label{eq: gal orbs}
N_k := \{q_k(t),q_k(\ze t),\ldots,q_k(\ze^{r-1}t)\}\subset t\IC[t]
\eeq
where $\ze=\exp(2 \pi i/r)$. 
If $k\in [k_{i+1},k_{i})$ then $\abs{N_k}=r_i$
by definition (of the $r_i$ and $k_i$).
Thus as $k$ varies the sets $N_k$ define a large disjoint union of copies of intervals (i.e. $r_i$ copies of 
$[k_{i+1},k_{i})$ for each $i$).
Moreover if $k<l$ then truncation gives a map $N_k\onto N_l$, and this tells us how to glue the intervals into a tree:
there is one interval, the trunk, over $[k_1,\infty)$. We glue this to 
the $k_1$-ends of the $r_1$ intervals over $[k_2,k_1)$.
Then over $k_2$ there are $r_1$ nodes, and we glue each of them to
$(r_2/r_1)$ of the $r_2$ intervals over $[k_3,k_2)$, etc, ending up with 
the $r$ {\em leaves} of the tree $N_0$ over $0$.
Thus the nodes where the tree branches are exactly the 
points $N_{k_1}\sqcup \cdots \sqcup N_{k_m}$, 
over $k_1>\cdots >k_m$.

Now it is easy to see the $k_i$ are the levels: identify $I_d$ with the leaves $N_0$; 
thus if $i,j\in I_d$ then 
$\slope(q_i-q_j)$ is the supremum of the unique shortest path in the tree between the leaves $i$ and $j$. 
Thus the levels are exactly where the branching occurs.

In turn, by definition (see \eqref{eq: adm condn}) the admissible deformations are thus exactly those which preserve the tree, as an abstract tree 
lying over $\IR_{\ge 0}$
(we can choose an order of $N_0$ to get an irregular type).
Consider the operation of adding to $q$ a monomial of the form 
$\al x^k$ where $k_i>k>k_{i+1}$ and $r_ik\in \IN$, as $\al\in \IC$ varies.
This operation 
clearly gives an admissible deformation of $q$
since it does not change any of the Galois orbits \eqref{eq: gal orbs}.
Thus we can admissibly deform $q$ to an element of the form
$\sum_1^m \be_ix^{k_i}$ with each $\be_i\neq 0$.
In turn we can admissibly deform 
this so that each $\be_i=1$.
This implies a), since we have shown that any two 
Stokes circles with the same levels $\{k_i\}$  can be admissibly deformed into $\<\sum_1^m x^{k_i}\>$ and thus into each other (and the converse is clear).
Statement b) is also now clear: the common denominator needs to increase to get into a bigger Galois orbit. 
\epf

\begin{remark}
Note that the statement a) includes the empty set: 
any two unramified circles 
$I,J\subset \cI$ are admissible deformations of each other.
This remark underlies the theory of Baker functions 
(see \cite{ihptalk} \S2.2 and the references there).
\end{remark}

\begin{remark}
Note that the proof really shows that
any two pointed irregular types $[(n_1,q_1)], [(n_2,q_2)]$ 
(each with just one Galois orbit) 
are admissible deformations of each other if and only if 
$\Levels(q_1)=\Levels(q_2)$, and $n_1=n_2$. 
\end{remark}

For later use we will formalise the various sets of 
data in the proof as follows:

\begin{defn}\label{defn: level datum}
A ``level datum'' is a finite, possible empty, subset 
$L\subset \IQ_{>0}$ satisfying the conditions 
\eqref{eq: condition on levels of a circle} 
(so the numbers it contains are the possible levels of a single Stokes circle).
\end{defn}

Thus, as above, a level datum $L=(k_1>\cdots > k_m)$ determines ramification indices 
$$\operatorname{RI}(L) = (r_1<\cdots < r_m)\subset \IN_{>1}$$
and in particular $\Ram(L) = r_m$.
In turn, as in the proof, we can define the 
{\em inconsequential exponents} $\Inc(L)\subset \IQ_{>0}$ as:
\begin{equation}\label{eq: inc expts}
\Inc(L) = \IN_{>0} \cup 
\left((k_2,k_1)\cap \frac{1}{r_1}\IN\right)
\cup \cdots \cup
\left((k_{m},k_{m-1})\cap \frac{1}{r_{m-1}}\IN\right)\cup
\left((0,k_{m})\cap \frac{1}{r_{m}}\IN\right),
\end{equation}
and the {\em admissible exponents}:
\begin{equation}\label{eq: adm exps}
A(L)  :=  L\sqcup \Inc(L)\subset \IQ_{>0}.
\end{equation}
If $L$ is empty then $\Ram(L)=1, \operatorname{RI}(L)=\{1\}$ and $A(L)=\Inc(L)=\IN_{>0}$. 
Note that the admissible exponents can thus also be expressed as:
\begin{equation}
A(L) = 
\left((0,k_m]\cap \frac{1}{r_m}\IN\right)
\cup \cdots \cup
\left((0,k_{2}]\cap \frac{1}{r_2}\IN\right)
\cup
\left((0,k_{1}]\cap \frac{1}{r_1}\IN\right)
\cup \IN_{>0}\subset \IQ_{>0}.
\end{equation}

Thus any Stokes circle $I\subset \cI$ determines subsets 
\beq\label{eq: L in A}
L=L(I)\subset A=A(I)=A(L)\subset \IR_{>0},
\eeq
 where 
$L=\Levels(I)$, and $A$ is the set of admissible exponents.
The ramification index $\Ram(v)\in\{1,r_1,\ldots,r_m\}$
of any $v\in \IR_{\ge 0}$ is defined
to be the least common multiple of the denominators of the levels 
$k_i\ge v$, so that
\beq\label{eq: ramification indices on branch}
\Ram(v) = r_i\qquad\text{if $v\in (k_{i+1},k_i]$}.
\eeq
Note that \eqref{eq: inc expts},\eqref{eq: adm exps}
imply the number of non-integral admissible 
exponents is given by the formula: 
\beq\label{eq: non-int adm formula}
\abs{A(L)\setminus \IN} = \sum_1^m r_ik_i -\floor{r_{i-1} k_i}
\eeq
where we set $r_0=1$.

\begin{eg} Let us look at a few simple examples.
\begin{itemize}
\item Suppose $q$ has $\ram(q)=r>1$ and $\slope(q)=s/r$
with  $s$ and $r$ coprime. Then
$q=\sum_1^s a_i x^{i/r}$ with $a_s\neq 0$, 
the only level of $q$ is its slope $\frac{s}{r}$, 
$\Levels(q)=(s/r)$ and  $A(L)=\IN_{>0}\cup\frac{1}{r}\{1,\ldots,s\}$.
\item Consider $q=x^3+x^{5/2}+x^{3/2}+x^{1/3}$. 
It has ramification order $6=\lcm(2,3)$.
The corresponding list of ramification indices is 
$\operatorname{RI}(q)=(2<6)$, and the levels are 
$\Levels(q)=(5/2>1/3)$.  
In turn $\Inc(q)=\IN_{>0}\cup\{3/2, 1/2, 1/6\},$  and 
$A(L)=\IN_{>0}\cup\{5/2, 3/2, 1/2, 1/3, 1/6\}$. 
\end{itemize}
\end{eg}

\begin{remark}\label{rmk: linktosings}
Note similar (elementary) methods appear in the theory of 
curve singularities \cite{Le-planecurves}. 
The reason for such a link to curves is 
the wild nonabelian Hodge correspondence,  
between meromorphic connections and 
meromorphic Higgs bundles, 
followed by taking the spectral curve of the Higgs field. 
Indeed the dictionary in \cite{wnabh} p.180 
determines the irregular class 
from the spectral invariants at the singularity of the Higgs field.
Note that we only consider {\em part} 
of the data of the corresponding curve singularity, and not all of it,  
i.e. the  ``principal part'' of the singularity, determining the irregular class.
This reflects the fact that fission, breaking up the curve at the pole, is about the various
growth rates of the essentially singular  functions $\exp(q)$ at $a\in \Si$, and this only involves the principal part of $q$.
\end{remark}

\subsubsection{Single circle configuration spaces}

For any pair $q_1,q_2\in \cI_d$ of exponential factors
we can consider the pointed irregular types 
$Q_1=[(1,q_1)], Q_2=[(1,q_2)]$ and say that 
$q_1\sim q_2$ if $Q_1\sim Q_2$ in the sense of 
\eqref{eq: numericalsim}, which amounts to the condition: 
$$\slope(q_1-\si^i(q_1)) = \slope(q_2-\si^i(q_2))$$
for $i=0,1,\ldots,\ram(q_1)$.

\begin{cor}
\label{cor: deformation_exp_factor_iff_same_levels}
Let $q_1$ and $q_2$ be two exponential factors. 
Then $q_1\sim q_2$ in the sense of 
\eqref{eq: numericalsim} if and only if 
$\Levels(q_1)=\Levels(q_2)$
(i.e. if and only if they are admissible deformations of each other). 
\end{cor}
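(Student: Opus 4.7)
The plan is to deduce this corollary directly from Proposition~\ref{prop: one circle classn}(a), which gives the analogous statement for Stokes circles rather than for pointed exponential factors; the only work is to match up the two types of data.

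First I would handle the easy direction. Assume $q_1 \sim q_2$. Expanding the definition of $\sim$ for the pointed irregular types $Q_1 = [(1,q_1)]$ and $Q_2=[(1,q_2)]$ (only one Galois orbit, so $m=1$), one has the equality
\[
\slope(\sigma^k(q_1) - \sigma^l(q_1)) = \slope(\sigma^k(q_2) - \sigma^l(q_2))
\]
for all admissible $k,l$. Taking $k=\ram(q_1)$, $l=0$ recovers (cf.\ Lemma~\ref{hypo:2.3}) that $\ram(q_1)=\ram(q_2)=:r$, so that the two Galois orbits $\langle q_1\rangle_d$ and $\langle q_2\rangle_d$ have the same cardinality. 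Since by \eqref{eq: levels} the set $\Levels(q_i)$ is exactly
\[
\{\slope(\sigma^k q_i - \sigma^l q_i) \mid 0 \le k,l \le r-1\}\setminus \{0\},
\]
the equality of the slope functions forces $\Levels(q_1)=\Levels(q_2)$.

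For the converse direction, assume $\Levels(q_1)=\Levels(q_2)$. By Proposition~\ref{prop: one circle classn}(a), the two Stokes circles $\langle q_1\rangle$ and $\langle q_2\rangle$ are admissible deformations of each other; indeed, inspecting the proof of that proposition, both admit an explicit admissible deformation to the normal form $\bigl\langle \sum_{i=1}^m x^{k_i}\bigr\rangle$, where $(k_1>\cdots>k_m)$ is the common level datum. By concatenating these two deformations (and at each intermediate stage choosing a distinguished point in the Galois orbit above $d$, which is possible since the ramification index is constant along the deformation), one obtains an admissible deformation between the pointed irregular types $Q_1$ and $Q_2$ in the sense of Definition~\ref{defn: hol adm defmn}. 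The admissibility condition \eqref{eq: adm condn} then directly asserts that $\slope(\sigma^k q - \sigma^l q)$ is constant along the deformation for every pair $(k,l)$, which is precisely the numerical equivalence $q_1\sim q_2$.

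The main (minor) subtlety is the converse: one needs to check that Proposition~\ref{prop: one circle classn}(a), whose conclusion is stated at the level of Stokes circles, can be promoted to an admissible deformation of pointed irregular types preserving the distinguished base point of the Galois orbit. As outlined above, this is immediate from the explicit construction in the proof of the proposition, since the number of Galois conjugates does not jump and one can track a continuous choice of representative throughout.
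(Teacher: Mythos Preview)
Your proof is correct and follows essentially the same approach as the paper's: both directions rely on Proposition~\ref{prop: one circle classn}, deforming $q_1$ and $q_2$ to the common normal form $\sum x^{k_i}$ and then using that the slope data $\slope(\sigma^k q - \sigma^l q)$ is constant along admissible deformations. Your discussion of tracking a distinguished point along the deformation corresponds to the paper's brief remark that the slope list is unchanged ``under a new choice of initial $q$''; you are just slightly more explicit about this point than the paper is.
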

\pf
Clearly if $q_1\sim q_2$ then $\Levels(q_1)=\Levels(q_2)$.
Conversely if they are admissible deformations of each other then they both can be admissibly 
deformed to 
$q_0 := \sum x^{k_i}$ (where the $k_i$ are the levels), 
and so the three lists
$$\slope(q_j-\si^i(q_j)),\qquad  i=1,2,3,4,\ldots$$
of rational numbers are equal, for $j=0,1,2$ (since they remain equal under any small admissible deformation, and under a new choice of initial $q$).
\epf

This motivates the definition of the  
configuration space $\pureconfig(q) := \pureconfig(Q)$
(from Defn. \ref{defn: config space}, using $\sim$ from 
\eqref{eq: numericalsim}),  
for any exponential factor $q$, where $Q=[(1,q)]$,
since we now see $\pureconfig(q)$ 
is the set of all exponential factors 
that are admissible deformations of $q$ and 
have Poincar\'e--Katz rank (maximal slope) at most $K := \Katz (q)$. 
Thus we deduce an explicit description of the configuration spaces 
in the case of one circle:

\begin{prop}\label{prop: singcirc conf}
Let $q\in \cI_d$ be an exponential factor, and 
let $L=\Levels(q)$ be the levels of $q$. 
Then $$\pureconfig(q)\cong (\IC^*)^m\times \IC^N,\qquad
\mathbf{SB}(q)\cong (\IC^*)^m\times \IC^M$$
where $m=\abs{L}$ is the number of levels, 
$N$ is the number $\bigl\vert\Inc^\flat(L)\bigr\vert$ 
of inconsequential exponents $\le K$, and 
$M=\abs{\Inc(L)\setminus\IN}$ is the number of  
non-integral inconsequential exponents.
In particular  
$\dim \pureconfig(q)$ is the number 
$\bigl\vert A^\flat(L)\bigr\vert$ 
of admissible exponents $\le K$
and $\dim \mathbf{SB}(q)=\abs{A(L)\setminus\IN}$ 
is the number of non-integral admissible exponents, as 
given by the formula  \eqref{eq: non-int adm formula}.
\end{prop}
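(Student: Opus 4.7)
By Corollary~\ref{cor: deformation_exp_factor_iff_same_levels} the space $\pureconfig(q)$ consists of exponential factors $q'$ of slope $\le K$ whose level set equals $L=\Levels(q)$. I would parameterize such $q'$ by its coefficients in the expansion $q'=\sum_k a_k x^k$ (a polynomial in $x^{1/r}$, $r=\Ram(q)$), and show that $\Levels(q')=L$ if and only if $a_k=0$ for every $k\notin A(L)$ and $a_{k_i}\neq 0$ for every level $k_i\in L$. This yields the homeomorphism $\pureconfig(q)\cong (\IC^*)^m\times \IC^N$, the $(\IC^*)^m$ factor being the nonzero coefficients at the $m$ levels, and the $\IC^N$ factor the free coefficients at the $N$ bounded inconsequential exponents in $A^\flat(L)\setminus L$.

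The main technical step uses the tree picture from the proof of Proposition~\ref{prop: one circle classn}: as $v$ decreases from $+\infty$, the ramification $\Ram(\tau_v q')$ (the least common multiple of the denominators of those $k\ge v$ with $a_k\neq 0$) is a non-decreasing step function whose jump set is precisely the level set of $q'$. Hence $\Levels(q')=L$ amounts to the identity $\Ram(\tau_v q')=\Ram(v)$ for all $v$, with $\Ram(v)$ given by~\eqref{eq: ramification indices on branch}. Recalling that $k\in A(L)$ is the condition that the denominator $d_k$ of $k$ divides $\Ram(k)$, this identity splits into two requirements: (a) no spurious jumps outside $L$, which forces $a_k=0$ for every $k\notin A(L)$---indeed, picking the largest such $k$ with $a_k\neq 0$, all higher non-zero-coefficient exponents lie in $A(L)$, so the running LCM $R^+$ just above $k$ divides $\Ram(k)=r_i$, and $d_k\nmid r_i$ then gives $d_k\nmid R^+$, producing an unwanted jump at $v=k\notin L$; and (b) each $k_i\in L$ does appear as a jump, which requires $a_{k_i}\neq 0$ since otherwise $\tau_{k_i}(q')=\tau_{k_i+\eps}(q')$. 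I expect case (a) to be the most delicate part of the argument.

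For the traceless space $\mathbf{SB}(q)$, the trace of $[(1,q')]$ is the Galois sum $\sum_{i=0}^{r'-1}\si^i(q')$ with $r'=\Ram(q')$. Grouping monomials by exponent and using $\sum_{i=0}^{d-1}\ze^i=0$ for every nontrivial $d$-th root of unity $\ze$, the ramified terms cancel and one obtains $\Tr([(1,q')])=r'\,\pi_{un}(q')$. The traceless condition is therefore equivalent to the vanishing of the integer-exponent coefficients of $q'$. By~\eqref{eq: condition on levels of a circle} every level has denominator strictly greater than~$1$, so the $(\IC^*)^m$ factor is unaffected, while the $\lfloor K\rfloor$ integer inconsequential coefficients are set to zero, leaving $\mathbf{SB}(q)\cong (\IC^*)^m\times \IC^M$ with $M=|\Inc(L)\setminus\IN|$. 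The dimension count $\dim\mathbf{SB}(q)=|A(L)\setminus\IN|$ then follows, and is evaluated by~\eqref{eq: non-int adm formula}.
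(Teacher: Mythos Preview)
Your proposal is correct and follows essentially the same approach as the paper. The paper's proof is a terse three-line reference back to the analysis in Proposition~\ref{prop: one circle classn}, asserting that the coefficients at $A(I)$ may be varied freely provided those at $L$ remain nonzero; you have spelled out the biconditional behind this assertion (your steps (a) and (b)) in detail, which is exactly the content implicit in that earlier proof. Your treatment of the trace-free case via $\Tr([(1,q')])=r'\pi_{un}(q')$ likewise matches the paper's argument preceding the proposition.
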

\pf
Given $I=\<q\>$ we consider $L(I)\subset A(I)\subset \IR_{>0}$
as in \eqref{eq: L in A}, consisting of the admissible exponents and 
the subset of levels.
We can move the coefficients parameterised by $A(I)$ arbitrarily provided 
those from $L$ remain non-zero.
The descriptions of the configuration spaces then arise a) by not going past $K$, and b) by only considering trace-free deformations.
\epf

\begin{eg} Let us come back to our previous examples:
\begin{itemize}
\item If $q=a_s x^{s/r}+\cdots + a_1x^{1/r}$, where 
$s$ and $r=\ram(q)>1$ are coprime, $a_s\neq 0$,
then its (slope bounded) admissible deformations are of the form
\[
q'=\sum_{k=1}^s b_k x^{k/r},
\]
with $b_s\in \C^*$ non-zero, and $b_1,\dots, b_{s-1}\in \IC$ arbitrary.
Removing the integral exponents leaves $s-\floor{s/r}$ coefficients,  
agreeing with formula \eqref{eq: non-int adm formula} for 
$\dim \mathbf{SB}(q)$ in this case.
\item For $q=x^3+x^{5/2}+x^{3/2}+x^{1/3}$, the set of levels is $L(q)=\{5/2,1/3\}$, the set of admissible exponents $\le 3$ is 
$\{3, 5/2, 2, 3/2, 1, 1/2,1/3, 1/6\}$, 
so the (slope bounded) admissible deformations of $q$ are of the form
\[
q'=\al x^3+a x^{5/2}+b x^2+c x^{3/2}+d x+ e x^{1/2}+f x^{1/3}+g x^{1/6},
\]
with $\al, a,b,c,d,e,f,g\in \IC$, with $a, f$ non-zero, and the other coefficients arbitrary.
The trace-free projection of $q$ is $\pr(q)=q-x^3$ and the 
trace-free admissible deformations of this are as above, but with 
$\al=b=d=0$. This deformation space has dimension $5$, agreeing with 
 \eqref{eq: non-int adm formula}.
\end{itemize}
\end{eg}

\begin{remark}
Note that if we change coordinates the subsets
$L(I)\subset A(I)\subset \IR_{>0}$
attached to any Stokes circle $I$ (as in \eqref{eq: L in A})
do not change.
(Similarly for the fission trees to be defined below.)
\end{remark}

\subsection{The case of two Stokes circles}

As a preparation to tackle the general case, we turn our attention to the case of two distinct active circles. We consider a \mit{} of the form
\[
Q=[(n,q),(\wh{n},\wh{q})],
\]
where $q$ and $\wh{q}$ have ramification orders 
$r$ and $\wh{r}$. We denote by  $q_0,\dots,q_{r-1}$ and $\wh{q}_0,\dots,\wh{q}_{\wh{r}-1}$ the elements of the Galois orbit of the corresponding \efs{}. A holomorphic family 
defined by $Q_b=[(n ,q(b)),(\wh{n}, \wh{q}(b))]$ is an admissible deformation if and only if $q(b)$ and $\wh{q}(b)$ are admissible deformations and if the rational numbers 
$\slope({q_i}-{\wh{q}_j})$ are constant.

We thus have to determine what are the slopes of the differences ${q_i}-{\wh{q}_j}$. This has been studied in \cite{doucot2021diagrams}, the results of which we now briefly recall. If $q$ and $\wh{q}$ are two distinct exponential factors, we can decompose them into a 
``common part'' and a ``different part'' 
in the following way. 
Recall that 
$\tau_k\colon\cI_d\to \cI_d$ denotes the the truncation map, 
discarding all monomials of slope $<k$, as in 
\eqref{eq: truncation}.
(Beware a different truncation was used in the proof of 
Prop. \ref{prop: one circle classn}, 
discarding monomials of slope $\le k$.)
Let 
$$E(q)\subset \IQ_{>0}$$
be the finite set of exponents occurring in $q$, so $q=\sum_{k\in E(q)} a_kx^k$ with each $a_k$ non-zero. 
Let $k\in E(q)$ be the smallest number such that
$$\<\tau_k(q)\>=\<\tau_k(\wh q)\>$$
i.e. the Galois orbits of the truncations are equal, if such a number exists.
If so then set
$$q_c = \tau_k(q),\qquad \wh q_c = \tau_k(\wh q).$$
If there is no such $k$ then set $q_c=\wh q_c=0$.
Then define $q_d=q-q_c$, $\wh q_d=\wh q -\wh q_c$, so that 
we get a decomposition 
\[ q=q_c+q_d, \qquad \wh{q}=\wh{q}_c+\wh{q}_d,
\]
of $q$ and $\wh{q}$ as the sum of a \textit{common part} $q_c$ and a \textit{different part} $q_d$. 
If $q_c=\wh{q}_c$ we say that $q$ and $\wh{q}$ are \textit{compatible}, as in Defn. \ref{defn: compatible pit}. 
Replacing $q$ or $\wh{q}$ by another element of their Galois orbit if necessary, we may assume without loss of generality that this is the case. 
Note that if $q$ and $\wh{q}$ do not have the same slope
then they do not have the same leading term up to Galois conjugacy, so $q_c=\wh{q}_c=0$.
If $q$ and $\wh{q}$ have a non-zero common part, in particular they have the same slope.  
We call the rational number 
\beq\label{eq: fission exponent}
f_{q,\wh{q}} := \max\bigl(\slope(q_d),\slope(\wh q_d)\bigr)
\in \IQ_{\ge 0}
\eeq
the \textit{fission exponent} of $q$ and $\wh{q}$.
It is zero if and only if $\<q\>=\<\wh q\>$.
Since it only depends on the circles/Galois orbits
this defines the fission exponent $f_{I,\wh I}$ for 
any Stokes circles 
$I=\<q\>$,$\wh I=\<\wh q\>$.

We are now in a position to describe the set of slopes we are interested in.

\begin{lem}
\label{lemma:slopes_differences}
Let $q$ and $\wh{q}$ be two exponential factors in distinct Galois orbits. The set of non-zero slopes among the rational numbers $\slope({q_i}-{\wh{q}_j})$, for $i=0,\dots, r-1$, $j=0,\dots, \wh{r}-1$ is equal to 
\[
\Levels(q_c) \sqcup \{f_{q,\wh{q}}\} \subset \Q_{>0},
\]
i.e it consists of the levels of the common part of $q$ and $\wh{q}$ together with their fission exponent. Furthermore, if $q$ and $\wh{q}$ are compatible i.e if $q_c=\wh{q}_c$ the map $(i,j)\mapsto \slope(q_i-\wh{q}_j)$ is entirely determined by the data of $\Levels(q_c)$ and $f_{q,\wh{q}}$.
\end{lem}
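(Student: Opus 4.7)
The plan is to reduce to the compatible case and then split the slope analysis into two independent pieces. Since the set of nonzero slopes $\slope(q_i-\wh q_j)$ depends only on the Galois orbits $\cir{q},\cir{\wh q}$, I would first replace $\wh q$ by a suitable Galois conjugate to arrange $q_c=\wh q_c$, which is always possible by construction of the common part. I would then decompose
\[
q_i-\wh q_j \;=\; A_{ij}+B_{ij}, \qquad A_{ij} := \si^i q_c-\si^j q_c, \qquad B_{ij} := \si^i q_d-\si^j \wh q_d.
\]
The essential observation is that every monomial in $q_c$ has slope $\ge k$, where $k$ is the threshold exponent defining the common part, while $q_d,\wh q_d$ are supported in slopes strictly less than $k$. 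Hence every level of $q_c$ is $\ge k>f_{q,\wh q}$, so $A_{ij}$ and $B_{ij}$ never compete: whenever $A_{ij}\ne 0$, its slope strictly dominates that of $B_{ij}$, and no cancellation is possible between the two summands.

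For the term $A_{ij}$, the tree picture in the proof of Proposition~\ref{prop: one circle classn} gives the full answer: $A_{ij}=0$ iff $i\equiv j\pmod{\ram(q_c)}$, and otherwise $\slope(A_{ij})$ is the level $k_\ell\in\Levels(q_c)$ indexed by the smallest $\ell$ with $r_\ell\nmid (i-j)$, where $(r_1<\dots<r_m)=\operatorname{RI}(\Levels(q_c))$. Thus the map $(i,j)\mapsto \slope(A_{ij})$ depends only on $(i-j)\bmod\ram(q_c)$ together with the level datum $\Levels(q_c)$, and every level of $q_c$ is realised for some $(i,j)$.

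For $B_{ij}$, the claim to establish is that $\slope(B_{ij})=f_{q,\wh q}=:f$ whenever $i\equiv j\pmod{\ram(q_c)}$ (these are the only pairs in which $B_{ij}$ governs the overall slope; the others are already handled by $A_{ij}$). When $\slope(q_d)\ne\slope(\wh q_d)$ the leading terms live at different slopes and nothing can cancel, so the claim is immediate. The delicate sub-case is $\slope(q_d)=\slope(\wh q_d)=f$: writing the leading monomials as $ax^f$ and $bx^f$, a cancellation of the $x^f$-coefficient of $B_{ij}$ would read $a\om_r^{irf}=b\om_{\wh r}^{j\wh r f}$, which combined with $\si^i q_c=\si^j q_c$ (automatic from $i\equiv j\pmod{\ram(q_c)}$) would force $\si^i(\tau_f q)=\si^j(\tau_f\wh q)$, hence $\cir{\tau_f q}=\cir{\tau_f\wh q}$ with $f\in E(q)$ and $f<k$, contradicting the minimality of $k$ in the definition of $q_c$. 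I expect this no-cancellation step to be the main obstacle, as it requires carefully tracking the interaction between the two Galois actions (with different ramifications $r$ and $\wh r$) and the common subramification $\ram(q_c)$.

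Putting the pieces together, $\slope(q_i-\wh q_j)=\max(\slope(A_{ij}),\slope(B_{ij}))$ is either a level of $q_c$ (when $i\not\equiv j\pmod{\ram(q_c)}$) or equals $f_{q,\wh q}$ (otherwise), yielding the claimed set equality $\{\slope(q_i-\wh q_j)\}\setminus\{0\}=\Levels(q_c)\sqcup\{f_{q,\wh q}\}$. The final assertion is then immediate: in the compatible case the value of $\slope(q_i-\wh q_j)$ is determined by $(i-j)\bmod\ram(q_c)$ together with the data $\Levels(q_c)$ and $f_{q,\wh q}$ alone.
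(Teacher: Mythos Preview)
Your approach is correct and follows essentially the same line as the paper's: the paper simply cites Lemma~4.3 of \cite{doucot2021diagrams} and sketches that the levels of $q_c$ arise from pairs where the Galois conjugates of the common part differ, while the fission exponent is the slope of all remaining differences. Your decomposition $q_i-\wh q_j=A_{ij}+B_{ij}$ and the no-cancellation argument via minimality of $k$ spell out precisely this idea.
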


\begin{proof}
This follows directly from the proof of the Lemma 4.3 of \cite{doucot2021diagrams}. The main idea is that the levels of the common part are obtained from the differences between the different Galois conjugates of the common part, while the fission exponent is the slope of all the other differences (for which the Galois conjugates of the common part are the same). 
\end{proof}

As a consequence, the 
numerical equivalence relation on pointed irregular types can be clarified.

\begin{prop} 
\label{prop: deformation_two_active_circles}
Let $Q=[(n ,q),(\wh{n}, \wh{q})]$ be a \mit{} with two active circles, such that $q$ and $\wh{q}$ are compatible. Let $k :=  f_{q,\wh{q}}$ be the fission exponent of $q$ and $\wh{q}$. Then a 
pointed irregular type $Q'$ satisfies $Q'\sim Q$ if and only if it is of the form $Q'=[(n ,q'),(\wh{n}, \wh{q}')]$, with $q'$ and $\wh{q}'$ compatible and such that
\begin{enumerate}
\item $q\sim q'$ and $\wh{q}\sim \wh{q}'$, 
\item $q'_c=\wh{q}'_c$ satisfies $q'_c\sim q_c$,
\item $f_{q',\wh{q}'}=k$.
\end{enumerate}
\noindent
These three conditions hold if and only if: 
$\Levels(q)=\Levels(q')$, $\Levels(\wh q)=\Levels(\wh q')$, and $f_{q',\wh{q}'}=k$.
\end{prop}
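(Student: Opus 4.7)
The plan is to exploit Lemma \ref{lemma:slopes_differences} twice: once set-theoretically to read off the invariants $\Levels(q_c)$ and $f_{q,\wh q}$ from the cross-orbit slopes, and once functionally to reconstruct the slope function from those invariants. The key geometric input is the inequality $f_{q,\wh q}<\min\Levels(q_c)$, which makes the decomposition $\Levels(q_c)\sqcup\{f_{q,\wh q}\}$ intrinsic and separates the ``fission'' contribution from the ``common-part'' contribution.

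For the forward direction, assume $Q'\sim Q$. Restricting \eqref{eq: numericalequiv} to $i=j$ gives matching intra-orbit slope functions, so by Corollary \ref{cor: deformation_exp_factor_iff_same_levels} one deduces $q\sim q'$ and $\wh q\sim \wh q'$, which is (1). Next, compatibility of $Q$ yields $\slope(q-\wh q)\le f_{q,\wh q}=\min S_Q$, where $S_Q(k,l) := \slope(\si^k q-\si^l\wh q)$; the equality $S_{Q'}=S_Q$ at $(k,l)=(0,0)$ then forces $\slope(q'-\wh q')\le f_{q',\wh q'}$, which is only possible when $q'_c=\wh q'_c$, i.e.\ $Q'$ is compatible with its given representatives. (Here one uses that $q'_c$ and $\wh q'_c$ lie in the same Galois orbit by construction, so if they differ then $\slope(q'_c-\wh q'_c)$ is a level of $q'_c$, which strictly exceeds $f_{q',\wh q'}$.) Applying Lemma \ref{lemma:slopes_differences} to both compatible pairs and equating the sets of cross-orbit slopes then gives
\[
\Levels(q_c)\sqcup\{f_{q,\wh q}\}=\Levels(q'_c)\sqcup\{f_{q',\wh q'}\}.
\]
The inequality $f_{q,\wh q}<\min\Levels(q_c)$ follows from $f_{q,\wh q}=\max(\slope q_d,\slope\wh q_d)<k_0\le\min\Levels(q_c)$, where $k_0$ is the cutoff defining $q_c=\tau_{k_0}(q)$; the analogous inequality holds for $Q'$. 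This identifies the two pieces on each side of the disjoint union, yielding $\Levels(q_c)=\Levels(q'_c)$ and $f_{q,\wh q}=f_{q',\wh q'}$. Corollary \ref{cor: deformation_exp_factor_iff_same_levels} upgrades the first equality to $q_c\sim q'_c$, which is (2), and the second equality is (3).

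The backward direction is assembly from (1), (2), (3). For $i=j$, condition (1) and Corollary \ref{cor: deformation_exp_factor_iff_same_levels} give identical intra-orbit slope functions. For $i\ne j$, the second part of Lemma \ref{lemma:slopes_differences} says that in the compatible case the entire function $(k,l)\mapsto\slope(\si^k q-\si^l\wh q)$ is determined by $\Levels(q_c)$ and $f_{q,\wh q}$; by (2) and (3) these data match between $Q$ and $Q'$, so the slope functions coincide and $Q'\sim Q$.

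Finally, for the simplified equivalence, the first two numerical conditions are just Corollary \ref{cor: deformation_exp_factor_iff_same_levels} applied to the pairs $q,q'$ and $\wh q,\wh q'$, giving (1). The remaining point is that (2) becomes redundant given compatibility together with (1) and (3); this follows from the identity
\[
\Levels(q_c)=\Levels(q)\cap(f_{q,\wh q},\infty),
\]
valid for any compatible pair. Indeed any level of $q$ strictly above $f_{q,\wh q}$ is an exponent of $q_c$ (since $q_d$ consists only of monomials of slope $\le f_{q,\wh q}$) and remains a level of $q_c$ because truncation above $k_0$ preserves the ramification-jump structure of the leading part; conversely any level of $q_c$ is an exponent of $q_c$, hence $\ge k_0>f_{q,\wh q}$, and is a level of $q$ by the same token. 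With this identity in hand, $\Levels(q)=\Levels(q')$ and $f_{q,\wh q}=f_{q',\wh q'}$ yield $\Levels(q_c)=\Levels(q'_c)$, hence $q_c\sim q'_c$ by Corollary \ref{cor: deformation_exp_factor_iff_same_levels}. The main obstacle is the careful bookkeeping around the inequality $f_{q,\wh q}<\min\Levels(q_c)$ and the identity $\Levels(q_c)=\Levels(q)\cap(f_{q,\wh q},\infty)$; both unwind the construction of the common part $q_c=\tau_{k_0}(q)$ and rest on the elementary observation that truncation of an exponential factor above its ``fission cutoff'' preserves its upper levels.
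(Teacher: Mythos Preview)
Your proof is correct and follows essentially the same route as the paper's: both use the intra-orbit restriction for (1), Lemma~\ref{lemma:slopes_differences} set-theoretically for (2) and (3), and its functional part for the converse. You are in fact more careful than the paper on two points: you make explicit the inequality $f_{q,\wh q}<\min\Levels(q_c)$ needed to split the disjoint union (the paper leaves this implicit), and you actually prove the final ``simplified equivalence'' via the identity $\Levels(q_c)=\Levels(q)\cap(f_{q,\wh q},\infty)$, which the paper asserts without argument. One small wrinkle: in your compatibility step you write $\slope(q-\wh q)\le f_{q,\wh q}$ and then jump to $\slope(q'-\wh q')\le f_{q',\wh q'}$; the passage really uses $\min S_{Q'}=f_{q',\wh q'}$, which follows from the first part of Lemma~\ref{lemma:slopes_differences} applied to $Q'$ (valid without compatibility) together with your inequality---worth saying explicitly. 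The paper instead derives (3) first from the set equality and only then checks compatibility, which makes that step marginally shorter.
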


\begin{proof} Let us assume that $Q'\sim Q$. Then, considering the differences internal to the two Galois orbits we have $q\sim q'$ and $\wh{q}\sim \wh{q}'$. Considering now the set of slopes of the differences between the two distinct Galois orbits for both $Q$ and $Q'$, we have from Lemma \ref{lemma:slopes_differences} that $L(q_c) \sqcup \{f_{q,\wh{q}}\}=L(q'_c) \sqcup \{f_{q',\wh{q}'}\}$ hence $f_{q',\wh{q}'}=f_{q,\wh{q}}=k$, and $L(q_c)=L(q'_c)$, so $q_c\sim q'_c$. Furthermore since $q$ and $\wh{q}$ are compatible, we have $\slope(q_0-\wh{q}_0)=k=\slope(q'_0-\wh{q}'_0)$ hence $q'$ and $\wh{q}'$ are also compatible. For the converse, let us assume that $Q'$ is of the claimed form. Then since $q\sim q'$ and $\wh{q}\sim \wh{q}'$ the slopes of the internal differences are the same for $Q$ and $Q'$. We have $L(q_c)=L(q'_c)$ and $f_{q,\wh{q}}=f_{q',\wh{q}'}$. Since $q$ and $q'$ are compatible, as well as $\wh{q}$ and $\wh{q}'$, the second part of Lemma \ref{lemma:slopes_differences} implies that the slopes of the differences between the distinct Galois orbits are the same for $Q$ and $Q'$.  
\end{proof}

From the case of a single circle, we know how to make the first two conditions explicit. Let us now investigate the third condition in more detail. 
Choose $k\in \Q_{>0}$ and let $q_c$ be an exponential factor whose exponents are all strictly greater than $k$. 
Write $k=n/m$ with $n,m$ coprime integers. 
Choose  $a,\wh{a}\in \IC$  and consider the exponential factors 
$$q :=  q_c + a z^k + b,\qquad \wh{q} :=  q_c +  \wh{a} z^k + \wh b$$
where $b,\wh b$ are exponential factors of slope $< k$.
The conditions for the fission exponent 
$f_{q,\wh q}$ to equal $k$ are as follows.

\begin{prop}\label{prop: condns at fissioexpt}
\label{proposition:types_of_fission_two_factors}
Let $r=\ram(q_c)$, $k=n/m$.
Then $f_{q,\wh q}=k$ if and only if either 1) or 2) holds:
\begin{enumerate}
\item $m$ divides $r$ and $a\neq \wh{a}$, or 

\item $m$ does not divide $r$ and either:
\begin{enumerate}
\item Exactly one of $a, \wh{a}$ is zero, or  
\item Both $a\neq 0$ and $\wh{a}\neq 0$, and 
furthermore  
$a^N\neq \wh{a}^N$, 
where $N=\lcm(r,m)/r$. %

\end{enumerate} 
\end{enumerate}
\end{prop}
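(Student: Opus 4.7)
The plan is to analyse when $f_{q,\hat q} = k$ by unpacking the common/different decomposition and then performing a direct Galois-orbit computation. By definition $f_{q,\hat q} = k$ is equivalent to the conjunction of (i) the truncations $\tau_k(q) = q_c + a x^k$ and $\tau_k(\hat q) = q_c + \hat a x^k$ having \emph{distinct} Galois orbits, so that nothing beyond $q_c$ can be absorbed into the common part, and (ii) at least one of $q_d, \hat q_d$ having slope exactly $k$, i.e.\ $(a,\hat a)\neq (0,0)$. Condition (ii) is automatic in each of the listed cases, so the substance lies in (i). To tackle (i), work in the common refinement $s = x^{1/R}$ with $R = \lcm(r,m)$ and $N = R/r$, so that the Galois group $\Z/R\Z$ is generated by $\sigma\colon s\mapsto \eta s$ with $\eta = \exp(-2\pi\sqrt{-1}/R)$. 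Since $q_c$ has exact ramification $r$, its stabiliser inside $\Z/R\Z$ is the order-$N$ subgroup $\langle\sigma^r\rangle$.

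A first observation that simplifies the analysis is that, since the exponents of $q_c$ in $x$ are strictly greater than $k$, the polynomial $\sigma^i(q_c)-q_c$ (expressed in $s$) is supported on exponents strictly greater than $Rk = Rn/m$, whereas $x^k$ has $s$-exponent exactly $Rn/m$. Hence the equality $\sigma^i(q_c + ax^k) = q_c + \hat a x^k$ forces $\sigma^i(q_c) = q_c$, i.e.\ $i = jr$ for some $j\in\{0,\ldots,N-1\}$, and then reduces to the single scalar equation $\hat a = a\,\eta^{jrRn/m}$. In Case~(1), $m\mid r$ gives $R = r$ and $N = 1$, so only $j = 0$ is available and the condition collapses to $a = \hat a$; distinctness of orbits is therefore equivalent to $a\neq \hat a$. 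In Case~(2a), where $m\nmid r$ and exactly one of $a,\hat a$ vanishes, the total ramifications of $q_c$ and $q_c + \hat a x^k$ are $r$ and $R>r$ respectively, so they can never be Galois conjugate and the orbits are automatically distinct.

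In Case~(2b), with $m\nmid r$ and both $a,\hat a$ nonzero, orbit equality is thus equivalent to $\hat a / a \in S := \bigl\{\exp(2\pi\sqrt{-1}\, jrn/m) : j=0,\ldots,N-1\bigr\}\subset\C^*$, and it remains to identify $S$ with the group $\mu_N$ of $N$-th roots of unity. Writing $d = \gcd(r,m) = m/N$, and using $\gcd(n,m) = 1$ to deduce $\gcd(rn,m) = d$, the residues $\{jrn\bmod m : j = 0,\ldots,N-1\}$ are pairwise distinct and fill out $d\cdot\Z/m\Z$, giving $S = \mu_{m/d} = \mu_N$. The two orbits therefore coincide iff $(\hat a/a)^N = 1$, equivalently $a^N = \hat a^N$, so they differ iff $a^N \neq \hat a^N$, as claimed. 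The main (but minor) obstacle is this arithmetic identification $S = \mu_N$, which uses the coprimality essentially; the remainder of the argument is routine once the Galois-orbit reformulation and the support observation above are in hand.
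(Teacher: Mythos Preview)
Your proof is correct and follows essentially the same strategy as the paper's: reduce to deciding when the Galois orbits of $q_c+ax^k$ and $q_c+\hat a x^k$ coincide, then analyse this via the action of the cyclic Galois group. Your version is considerably more explicit---in particular you actually carry out the arithmetic identification $S=\mu_N$ (via $\gcd(rn,m)=\gcd(r,m)=m/N$), whereas the paper simply asserts that the $N$ preimages of a given truncation have $x^k$-coefficients differing by $N$-th roots of unity and leaves the verification to the reader.
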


\begin{remark}
Notice that in case 1., one of $a$ and $\wh{a}$ can be equal to zero. 
Also note that 2a,2b can be combined into the single statement that $a^N\neq \wh{a}^N$.
The three cases are distinguished since in 1
the number $k$ is in neither of the sets 
$\Levels(q),\Levels(\wh q)$, for 2a it is in just one, and for 2b it is in both.
For later use (cf. Prop \ref{prop: 3types of branching})
we encode the three cases 1,2a,2b pictorially as follows:
\begin{center}
\begin{tikzpicture}
\tikzstyle{mandatory}=[circle,fill=black,minimum size=5pt,draw, inner sep=0pt]
\tikzstyle{authorised}=[circle,fill=white,minimum size=5pt,draw,inner sep=0pt]
\tikzstyle{empty}=[circle,fill=black,minimum size=0pt,inner sep=0pt]
\tikzstyle{indeterminate}=[circle,densely dotted,fill=white,minimum size=5pt,draw, inner sep=0pt]
    \node[authorised] (A1) at (1-4,0){};
    \node[authorised] (A2) at (2-4,0){};
    \node[indeterminate] (A0) at (1.5-4,1){};   
    \node[empty] (B1) at (1,0){};
    \node[mandatory] (B2) at (2,0){};
    \node[indeterminate] (B0) at (1.5,1){};   
    \node[mandatory] (C1) at (1+4,0){};
    \node[mandatory] (C2) at (2+4,0){};
    \node[indeterminate] (C0) at (1.5+4,1){};   
  \foreach \from/\to in {A1/A0,A2/A0,B1/B0,B2/B0,C1/C0,C2/C0}
    \draw (\from) -- (\to);
\end{tikzpicture}
\end{center}

\end{remark}

\begin{proof} 
Clearly we need $a\neq \wh a$, and can set $b=\wh b=0$ without loss of generality.
We then need to see when $\<q\>\neq \<\wh q\>$. 
\begin{enumerate}
\item Let us first assume that $m$ divides $r$, so that $k$ is not a level of $\<q\>$ nor $\<\wh q\>$, and  
$\ram(q)=\ram(\wh{q})=\ram(q_c)$.
Thus the Galois orbits of $q$ and $\wh q$ are in bijection with that of $q_c$ (via truncation). 
This implies that the Galois orbits of $q$ and $\wh{q}$ are distinct if and only if $a\neq \wh{a}$.
\item 
2a) is clear so we consider 2b).  
Let us set $r'=\mathrm{lcm}(r,m)$ and $N=r'/r$.
$\Ram(q)$ is equal to $r'$ if $a\neq 0$, otherwise it is equal to $r$, and similarly for $\wh{q}$.  
If $a\neq 0$, then in the Galois orbit of $q$  
there are $N$ elements giving rise, upon truncation,  to any given element of the Galois orbit of $q_c$, and their coefficients of exponent $k=n/m$ differ by an $N$-th root of unity. The conclusion follows.
\end{enumerate}
\end{proof}

This enables us to get an explicit description of 
$\pureconfig(Q)$, as we now do for a few examples. 
In terms of lists of exponents attached to $I=\<q\>$,$\wh I=\<\wh q\>$
we have the subsets 
$$
L(I)\subset A(I)\subset \IR_{>0},\qquad
L(\wh I)\subset A(\wh I)\subset \IR_{>0}
$$
(as in \eqref{eq: L in A}) 
which we should identify just above the fission exponent
$f_{I,\wh I}$, in order to get the set of exponents whose coefficients 
we can vary.
And these coefficients 
can be varied arbitrarily provided those from $L(I)$ or $L(\wh I)$ remain non-zero and those at the fission exponent continue to satisfy the same 
part of Prop. \ref{prop: condns at fissioexpt}.

\begin{eg}
Let us look at a few examples illustrating the different cases: 
\begin{itemize}
\item Consider $Q=[(1,\lambda x^{s/r}),(1,\mu x^{s/r})]$ with $r>1$ and $s$ coprime and $\lambda\neq \mu e^{2in\pi/r}$ for any integer $n$ (so that the Galois orbits are disjoint). The common part of the two exponential factors is empty, and their fission exponent is the level $s/r$, so this fits into the case \textit{2(b)}. The (slope bounded) admissible deformations of $Q$ are of the form $Q'=[(1,q'_1),(1,q'_2)]$
with
\[
q'_1=\sum_{k=1}^s a_k x^{k/r}, \qquad q'_2=\sum_{k=1}^s b_k x^{k/r},
\]
with $a_1,\dots, a_s, b_1,\dots, b_s\in \IC$, $a_s\neq 0$, $b_s\neq 0$ and $a_s\neq b_s e^{2in\pi/r}$ for any integer $n$, that is we have 
\[\pureconfig(Q)\cong \{ (a_1,\dots, a_s,b_1,\dots,b_s) \in \C^{2s}\; \vert\; a_s\neq 0, b_s\neq 0, \forall n\in \N, a_s\neq b_s e^{2in\pi/r}\}.
\]
\item Let us consider $Q=[(1,q_1),(1,q_2)]$ with $q_1=x^{3/2}+x^{1/2}$, and $q_2=x^{3/2}+2x^{1/2}$. The common part of $q_1$ and $q_2$ is $x^{3/2}$, and they are compatible.  This fits into the first case, indeed the fission exponent $1/2$ is not a level of $q_1$ and $q_2$. The 
 (slope bounded) admissible deformations of $Q$ are of the form $Q'=[(1,q'_1),(1,q'_2)]$, with 
\[q'_1=a x^{3/2}+b x + c x^{1/2}, \qquad q'_1=a x^{3/2}+b x + d x^{1/2},
\]
with $a,b,c,d\in \IC$, $a\neq 0$ and $c\neq d$, i.e. we have 
\[
\pureconfig(Q)\cong \{ (a,b,c,d)\in \C^4\; \vert\; a\neq 0, c\neq d\}.\]
\item Let us consider $Q=[(1,q_1),(1,q_2)]$ with $q_1=x^{3/2}+x^{1/3}$, and $q_2=x^{3/2}$. The ramification order of $\cir{q_1}$ is 6, the common part of $q_1$ and $q_2$ is $x^{3/2}$, and they are compatible. This fits into case \textit{2(a)}, indeed the fission exponent $1/3$ is a level of $q_1$, but does not appear in $q_2$. The  (slope bounded) admissible deformations of $Q$ are of the form $Q'=[(1,q'_1),(1,q'_2)]$, with 
\[q'_1=a x^{3/2}+b x + c x^{1/2} + d x^{1/3}+ e x^{1/6}, \qquad q'_2=a x^{3/2}+b x + c x^{1/2},
\]
with $a,b,c,d,e\in \IC$, with $a\neq 0$ and $d\neq 0$, i.e. we have
\[\pureconfig(Q)\cong \{ (a,b,c,d,e)\in \C^5\; \vert\; a\neq 0, d\neq 0\}.\]
\end{itemize}
\end{eg}

\subsection{Fission data}

As a step towards the general case
we will give a first attempt at packaging the relevant data. 
Recall from Defn. \ref{defn: level datum} 
that a ``level datum'' is a finite, possible empty, subset 
$L\subset \IQ_{>0}$ satisfying the conditions 
\eqref{eq: condition on levels of a circle} 
(so the numbers it contains are the possible levels of a single Stokes circle).

\begin{defn}
A ``fission datum'' is a pair $\cF=(\cL,f)$ 
where $\cL$ is a multiset\footnote{Recall that a multiset is a set with multiplicities. 
Here this means the $L_i$ are not necessarily 
distinct level data 
(but the ordering of the level data, i.e. the labelling by indices $1,\ldots,m$, is not part of the data).}
$$\cL=L_1+\cdots + L_m$$ of level data
and $f$,  the fission exponents, are 
the choice of a rational number 
$f_{ij}=f_{ji}\in \IQ_{\ge 0}$,  for all
 $i,j\in \{1,\ldots,m\}$.
\end{defn}

An irregular class determines a fission datum in the obvious way, as follows.
If $\Th=\sum_1^m I_i$ is a rank $n$ irregular class (where the Stokes circles $I_i$ are not necessarily distinct) then define 
$L_i=L(I_i)$ to be the level datum of $I_i$ for each $i$, 
and define
$\cL(\Th)  :=  \sum_1^m L_i$
to be the corresponding multiset of level data. 
Then by taking $f_{ij}=f_{I_i,I_j}$ to be corresponding fission 
exponents \eqref{eq: fission exponent}, this determines the fission datum  
$\cF(\Th)=(\cL(\Th),f)$
of the irregular class $\Th$.
Note that the multiplicity of any 
given Stokes circle $I_j$ in the class 
$\Th$ is determined by the fission data by the recipe:
$$\Th(I_j) = \abs{\{i=1,2,\ldots,m\st f_{ij}=0\}}.$$

In turn  a {\em labelled} fission datum $\wh\cF=(\wh \cL,f)$ is a pair consisting of an ordered list
$$\wh \cL = [(n_1,L_1),\ldots,(n_p,L_p)]$$
where the $L_i$ are not necessarily distinct level data, 
together with fission exponents
$f_{ij}=f_{ji}\in \IQ_{\ge 0}$ 
for $i,j\in \{1,\ldots,p\}$
such that $f_{ij}=0$ if and only if $i=j$.
Then a \mit{} determines a labelled fission datum in the obvious way (and in turn a labelled fission datum determines a fission datum by forgetting the labelling).

The study in the case of one and two circles then implies 
one of the main statements:

\begin{thm}
\label{thm:admissible_deformation_iff_same_fissiondata}
Let $Q=[(n_1,q_1),\dots,(n_p,q_p)]$ be a \mit{}, which we assume to be compatible, and $\wh \cF$ its labelled fission datum. 
Then a \mit{} $Q'$ satisfies $Q'\sim Q$ if and only if it
is compatible and its 
labelled fission datum equals $\wh\cF$. 
\end{thm}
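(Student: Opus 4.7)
The plan is to reduce the multi-orbit statement to the pairwise analysis already carried out in Proposition~\ref{prop: deformation_two_active_circles}, by exploiting the fact that the numerical equivalence $\sim$ only constrains slopes of \emph{pairs} of exponential factors (including the case of two Galois conjugates of the same factor). Concretely, I will organize the proof as two implications.

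For the forward direction, assume $Q' = [(n'_1,q'_1),\dots,(n'_{p'},q'_{p'})] \sim Q$. The lemma immediately following Definition~\ref{defn: compatible pit} (restricted to numerically equivalent pointed irregular types) gives $p'=p$, $n_i'=n_i$ and $\ram(q'_i)=\ram(q_i)$. Restricting the defining equalities \eqref{eq: numericalequiv} to a single Galois orbit ($i=j$) and applying Corollary~\ref{cor: deformation_exp_factor_iff_same_levels} yields $\Levels(q'_i)=\Levels(q_i)=L_i$ for every $i$. For each pair $i\neq j$, Lemma~\ref{lemma:slopes_differences} identifies the set of non-zero slopes $\slope(\sigma^k(q_i)-\sigma^l(q_j))$ with $\Levels(q_{i,c}) \sqcup \{f_{ij}\}$, where $q_{i,c}$ is the common part of $q_i$ with $q_j$. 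Applied to both $Q$ and $Q'$, the equality of these (multi-)sets of slopes forces $f'_{ij}=f_{ij}$, since the fission exponent is singled out as the unique slope strictly smaller than the minimal level of $q_{i,c}$ (or the unique slope in the set, when $q_{i,c}=0$). This gives $\wh\cF(Q') = \wh\cF$. For compatibility of $Q'$: since $\slope(q'_i-q'_j)=\slope(q_i-q_j)=f_{ij}$ (the $k=l=0$ instance of \eqref{eq: numericalequiv}) and since $Q$ is compatible, the coefficients of monomials of slope $>f_{ij}$ in $q'_i$ and $q'_j$ must coincide for all truncations where their Galois orbits agree; this is precisely the statement that $Q'$ is compatible in the sense of Definition~\ref{defn: compatible pit}.

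For the reverse direction, assume $Q'$ is compatible with $\wh\cF(Q')=\wh\cF$. The single-circle case (Corollary~\ref{cor: deformation_exp_factor_iff_same_levels}) together with $\Levels(q'_i)=L_i=\Levels(q_i)$ yields matching slopes $\slope(\sigma^k(q'_i)-\sigma^l(q'_i)) = \slope(\sigma^k(q_i)-\sigma^l(q_i))$ for all $i,k,l$. For a pair $i\neq j$, compatibility of $Q$ and $Q'$ ensures that $q_{i,c}=q_{j,c}$ and $q'_{i,c}=q'_{j,c}$, so the second assertion of Lemma~\ref{lemma:slopes_differences} applies: the map $(k,l)\mapsto \slope(\sigma^k(q_i)-\sigma^l(q_j))$ is entirely determined by $\Levels(q_{i,c})$ and $f_{ij}$. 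Since $L_i=L'_i$ and $f_{ij}=f'_{ij}$ determine $\Levels(q_{i,c})=\Levels(q'_{i,c})$ (the levels of the common part are those levels of $q_i$ strictly above $f_{ij}$), the corresponding slopes for $Q'$ coincide with those for $Q$, establishing $Q'\sim Q$.

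The step I expect to require most care is the compatibility bookkeeping in the forward direction, specifically the claim that equality of the labelled fission data plus compatibility of $Q$ forces compatibility of $Q'$. The subtle point is that compatibility is a condition on the chosen representatives $q'_i$ (not just on Galois orbits), so one must verify that the distinguished representatives of the orbits of $Q'$ are pinned down, in the appropriate slope range, by the $k=l=0$ instances of \eqref{eq: numericalequiv} together with the compatibility of $Q$; the rest of the argument is then a clean pairwise application of Lemma~\ref{lemma:slopes_differences}.
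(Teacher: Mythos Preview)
Your proposal is correct and follows essentially the same approach as the paper: both reduce the statement to the single-circle case (Corollary~\ref{cor: deformation_exp_factor_iff_same_levels}) and the pairwise two-circle case (Proposition~\ref{prop: deformation_two_active_circles} via Lemma~\ref{lemma:slopes_differences}), exploiting that both $\sim$ and the labelled fission datum are determined by pairwise information. The paper's proof is extremely brief, and you have fleshed out the details the paper leaves implicit, including correctly identifying the compatibility step as the most delicate point. One minor referencing slip: the lemma you invoke for $p'=p$, $n'_i=n_i$, $\ram(q'_i)=\ram(q_i)$ is the one immediately following the definition of numerical equivalence \eqref{eq: numericalsim}, not the one following Definition~\ref{defn: compatible pit}; and in fact $p'=p$ and $n'_i=n_i$ are already part of the definition of $\sim$, with only the ramification equality being the content of that lemma.
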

\begin{proof}
This follows from our study of the case of one and two active circles. To see this, notice that the fission data of $Q$ is equivalent to the data of the levels of its active circles, together with the data of the common part and the fission exponent of any pair of distinct active circles. 
The result now follows from Corollary \ref{cor: deformation_exp_factor_iff_same_levels} and Proposition \ref{prop: deformation_two_active_circles}.
\end{proof}

To go further we will now define fission trees
(gluing just above the fission exponents, as above); 
this will
give a way to parameterise the set of coefficients we can vary, and thus to describe the configuration spaces,  
leading to a proof that two irregular classes are admissible deformations of each other if and only if their fission data are equal.
It will also 
give a way to classify the possible topological data, i.e. the set of possible admissible deformation classes (it seems difficult to write down axioms for the fission data that actually arise from irregular classes, without discussing trees).

\subsection{Fission trees in the twisted setting}
\label{ssn: fission trees}

First we will describe abstractly the exact types of trees we get, 
and then define how to obtain such trees from irregular types/classes.

\subsubsection{Fission trees} 
Consider a six-tuple $(\cT,\IV,\IA,\IL, h, n)$ where:

$\bullet$\ $\cT$ is a metrised tree\footnote{See e.g. \cite{baker-faber2004} for metrised graphs, but note that ours are not compact, 
and recall that a tree is a special type of graph.},  
with vertices $\IV\subset \cT$,

$\bullet$\ $\IA\subset \IV$ is a subset (the admissible vertices),

$\bullet$\ $\IL\subset \IA$ is a finite, possibly empty, subset
(the internal levels/mandatory vertices),

$\bullet$\ $h\colon\cT\to \IR_{\ge 0}$ is a length preserving map, the height map, mapping each edge isomorphically onto an interval, such that $\IV_0 :=  h^{-1}(0)\subset \cT$ is a finite set and is the set of leaves of $\cT$,

$\bullet$\ $n$ is a map $\IV_0\to \IN_{>0}$, giving a multiplicity to each leaf.

\

The edges $E=E(\cT)=\pi_0(\cT\setminus \IV)$ of $\cT$
are the components of the complement of the vertices.
Thus any vertex that is not a leaf is adjacent to $\ge 2$ edges, one of which is the ``parent'' edge, and the others are the descendant edges.
The branch vertices $\IY\subset \IV$ are those with $\ge 2$ descendants
(where the branching of the tree occurs).  
The trunk of the tree is the union of all the edges and vertices 
above all the branch vertices. 
The vertices in $\IL$ will be called ``mandatory'', 
those in $\II := \IA\setminus \IL$ will be called ``\authorised{}'', 
and the others
$(\IV\setminus \IA)$  will be called ``empty''.

The ``full branch'' $\cB_i$ of any leaf $i\in \IV_0$ is 
the (minimal) subset of $\cT$ all the way from $i$ to the far end of the trunk. 
Let $\IL_i=\IL\cap \cB_i$ denote 
the internal levels on the $i$th full branch, 
and let $\IA_i=\IA\cap \cB_i$ similarly
(the admissible vertices on the $i$th full branch).

\ 

\begin{defn}\label{defn: fissiontree}
Such a tuple $(\cT,\IV,\IA,\IL, h, n)$ is a ``fission tree'' if:

1) $\IV = h^{-1}(\{0\}\cup h(\IA))$; the vertices are 
exactly the leaves plus the points that map to $h(\IA)$, 

2) $h$ maps each full branch isomorphically onto $\IR_{\ge 0}$,

3) The internal levels of any full branch map to a 
set of levels, i.e. $L_i :=  h(\IL_i)\subset \IQ_{>0}$ 
satisfies the conditions 
\eqref{eq: condition on levels of a circle},
for any leaf $i$ 
(so they are the possible levels of a single Stokes circle),

4) $A_i :=  h(\IA_i)\subset \IQ_{>0}$ is the set $A(L_i)$ of admissible exponents of $L_i$ for each leaf $i$, as in 
\eqref{eq: adm exps},

5) The children $\Ch(v)\subset \IV$ of each branch vertex $v\in \IY$ satisfy one of the following three conditions:

\ \ \ \ 1. All the vertices in $\Ch(v)$ are \authorised{},

\ \ \ \ 2a. One vertex in $\Ch(v)$ is empty and the others are mandatory,

\ \ \ \ 2b. All the vertices in $\Ch(v)$ are mandatory.
\end{defn}

Note in particular that the leaves of a fission tree are empty. 
Two fission trees are {\em isomorphic} if there is an 
isomorphism between the underlying trees relating all the data 
$\IV,\IA,\IL, h, n$.

A {\em labelling} 
of a fission tree with nodes $\IV_0$ is a total ordering of the 
set of leaves, i.e. a bijection 
$\psi\colon\{1,\ldots,\abs{\IV_0}\}\cong \IV_0$. 
Two labelled fission trees are isomorphic if there is a label-preserving isomorphism (so there is at most one isomorphism between labelled fission trees).

\begin{remark}\label{rmk/dfn: relram}
Note that the definition 
\eqref{eq: ramification indices on branch} 
extends directly to define the 
ramification index $\Ram(v)$ of any point 
$v\in\cT$ of a fission tree,
taking the $\lcm$ of the denominators of 
the heights of its mandatory ancestors $\ge v$ (i.e. 
in $\IL$, of height $\ge h(v)$ and on the same full branch).
If $p\in\IV$ is the parent of some vertex $v\in\IV$, by definition the ``relative ramification'' of $p$ is 
$\Ram(v)/\Ram(p)$.
Observe that the integer $N=\lcm(r,m)/r$ in
part 2a of Prop. \ref{prop: condns at fissioexpt} is an example of relative ramification.
\end{remark}

\begin{remark}
Axioms 3,4) of Defn. \ref{defn: fissiontree}
imply that axiom 5) could be replaced by the simpler statement ``$\Ch(v)$  contains at most one empty vertex for  any $v\in\IY$''. 
\end{remark}

\subsubsection{Fission data of a fission tree}
Given a fission tree $\cT=(\cT,\IV,\IA,\IL,h,n)$ 
let $A=h(\IA)\subset \IQ_{>0}$ 
be the admissible exponents. 
Given two distinct leaves $i,j\in \IV_0$, let 
$v_{ij}\in \IY$ be their nearest common ancestor 
(i.e. the branchpoint where $\cB_i,\cB_j$ meet).
Thus $\cT$ determines a number,  the fission exponent
$$f_{ij} :=  \precr(h(v_{ij}))\in A$$ 
for each pair of leaves, where $\precr\colon A\to A\cup \{0\}$
takes $a\in A$ to the preceding element of $A$, 
i.e. the largest element $<a$ (or to zero if $a=\min(A)$).
Axiom 5) implies  $f_{ij}\neq 0$.

Thus a fission tree $\cT$ determines a fission datum $\cF(\cT)=(\cL,f)$
where $\cL=\sum n_i L_i$ is the set of level data of each full branch (repeated according to their multiplicities) 
and $f$ encodes the fission exponents between the branches of the tree.
The following statement is now an exercise:
\begin{lem}\label{lem: fission trees and data}
Two fission trees are isomorphic if and only if their fission data are equal: 
$$\cT_1\cong \cT_2\  \Longleftrightarrow \ \cF(\cT_1) =  \cF(\cT_2).$$
\end{lem}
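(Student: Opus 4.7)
Plan: The forward implication is routine: an isomorphism of fission trees respects all of $\IV, \IA, \IL, h, n$ by Defn.~\ref{defn: fissiontree}, so it sends leaves to leaves preserving multiplicities, maps full branches to full branches isometrically (matching the level data $L_i$), and carries nearest common ancestors to nearest common ancestors of the same height, hence preserves fission exponents. For the converse, starting from the assumption $\cF(\cT_1) = \cF(\cT_2)$, the first step is to extract a bijection $\psi$ between the leaves (with multiplicities) of $\cT_1$ and those of $\cT_2$ such that $L_i(\cT_1) = L_{\psi(i)}(\cT_2)$ and $f_{ij}(\cT_1) = f_{\psi(i)\psi(j)}(\cT_2)$ for all leaves $i, j$.

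Given such $\psi$, for each leaf $i$ axioms 2--4 of Defn.~\ref{defn: fissiontree} force a canonical length-preserving bijection $\phi_i\colon \cB_i(\cT_1) \to \cB_{\psi(i)}(\cT_2)$: both full branches are identified with $\IR_{\ge 0}$ via $h$, carrying the same mandatory and admissible vertex sets determined by $L_i$ and $A(L_i)$. The central step will be to glue the $\phi_i$ into a single map $\phi\colon \cT_1 \to \cT_2$. In a fission tree, two full branches $\cB_i, \cB_j$ coincide on $[h(v_{ij}), \infty)$, where $v_{ij}$ is the nearest common ancestor of $i$ and $j$; by axiom~5 and the definition of the fission exponent, $h(v_{ij})$ equals the successor of $f_{ij}$ inside the set $A(L_i) = A(L_j)$ of admissible heights above the merge. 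Since both $L_i$ and $f_{ij}$ are preserved by $\psi$, one has $h(v_{ij}(\cT_1)) = h(v_{\psi(i)\psi(j)}(\cT_2))$, and uniqueness of the length-preserving branch maps forces $\phi_i$ and $\phi_j$ to agree on this common interval, so they glue consistently.

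The main obstacle will be verifying that the global combinatorial tree structure is faithfully reconstructed: the partition of leaves lying below each internal vertex must match between $\cT_1$ and $\cT_2$, not just the pairwise data. This I expect to deduce from the standard four-point property of trees, namely that for any three leaves $i, j, k$ the maximum of $f_{ij}, f_{ik}, f_{jk}$ is attained at least twice and pinpoints which two of the three paths merge first. Since this is intrinsic to any tree, and the fission exponents are preserved by $\psi$, the nesting pattern of branch vertices is forced to agree on both sides; a straightforward induction on finite subsets of leaves then confirms that the glued map $\phi$ is a bijection sending branch vertices to branch vertices with matching incidence. Combined with length-preservation and the fact that $\IV, \IA, \IL, n$ depend only on the per-branch data preserved by $\psi$, this will show $\phi$ is an isomorphism of fission trees.
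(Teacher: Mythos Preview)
The paper leaves this lemma as an exercise, so there is no proof to compare against. Your plan is a sound way to carry it out, and the overall architecture---reconstruct each full branch from its level datum via axioms 2--4, glue branch-isomorphisms over their common ancestors, and verify coherence of the gluing using the ultrametric property of nearest common ancestors---is correct.

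There is one slip to fix. You assert that $h(v_{ij})$ equals the successor of $f_{ij}$ inside $A(L_i)=A(L_j)$, but this is false in general: the branch vertex $v_{ij}$ may be an \emph{empty} vertex on both $\cB_i$ and $\cB_j$, so that $h(v_{ij})\notin A(L_i)$. Example~\ref{eg: bigexample} exhibits this: for $q_3=x^{1/3}$ and $q_4=2x^{1/3}$ the nearest common ancestor $v_{34}$ sits at height $1/2$, whereas $A(L_3)=A(L_4)=\{1/3\}\cup\IN_{>0}$ does not contain $1/2$; the successor of $f_{34}=1/3$ in $A(L_3)$ would be $1$, not $1/2$. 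The correct statement, straight from the definition of $f_{ij}=\precr(h(v_{ij}))$ in the paragraph preceding the lemma, is that $h(v_{ij})$ is the successor of $f_{ij}$ in the \emph{global} set $A=h(\IA)=\bigcup_k A(L_k)$. Since $A$ depends only on the multiset of level data, it is determined by the fission datum and is the same for $\cT_1$ and $\cT_2$; once you make this substitution, your gluing argument goes through unchanged.
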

Similarly two labelled fission trees are isomorphic if and only if
their labelled fission data are equal.

\subsubsection{Fission tree of an irregular class} 
We now describe how to define the fission tree of an irregular class 
$\Th=\sum n_i I_i$: 
firstly there is a full branch $\cB_i$ 
(of multiplicity $n_i$) for each distinct circle $I_i$.  
Thus $\cB_i$ is a copy of $\IR_{\ge 0}$ equipped with the subsets 
$\IL_i\subset \IA_i\subset \cB_i$, and an isomorphism 
$h\colon\cB_i\to \IR_{\ge 0}$, defined so that 
the subsets $\IL_i\subset \IA_i$ map onto 
the sets $L_i=L(I_i)\subset A_i=A(I_i)$ of levels and admissible exponents
(defined from $I_i$ as in \ref{eq: L in A}).

We then define 
$A=A(\Th) := \bigcup A_i\subset \Q_{>0}$ to be the union of  all the admissible exponents. 
This is a discrete subset and for any $k\in \IQ_{>0}$ we can define 
the successor $\succr(k)\in A$ to be the next element of $A$, i.e. the smallest element of $A$ that is $>k$.
If $f_{ij}=f_{I_i,I_j}$ is the fission exponent between 
${I_i,I_j}$ then define the {\em gluing exponent}
$$g_{ij}  :=  \succr(f_{ij})\in A$$
to be the next admissible exponent after the fission exponent.

We then glue the full branches 
$\cB_i,\cB_j$ over the interval $[g_{ij}, \infty)$ for each $i,j$, to define the tree $\cT$ equipped with the map $h\colon\cT\to \IR_{\ge 0}$.
The subsets $\IL_i\subset \IA_i$ fit together to define 
$\IL\subset \IA\subset \cT$, and we set 
$\IV=h^{-1}(A\cup \{0\})\subset \cT$.
Let $\IV_k=h^{-1}(k)$ denote the vertices of height $k$.

This defines the fission tree 
$\cT(\Th)=(\cT,\IV,\IA,\IL,h,n)$. 
All the axioms are clear except 5), 
which will follow from Prop. \ref{prop: 3types of branching} below.

In case we start with 
a \mit{} $Q=[(n_1,q_1),\ldots,(n_m,q_m)]$, and not just an irregular class,  then we get a labelled fission tree $\wh \cT(Q)$, 
by labelling the nodes $\IV_0$ according to the labelling of the exponential factors $q_1,\ldots,q_m$.

The nodes $\IV\subset \cT$ may be interpreted 
in terms of truncated circles as follows. 
Recall that if $k\in \IQ$ then  $\tau_k(q)$
is the truncation, forgetting monomials of slope $<k$. 
\begin{lem}
For each $k\in A\cup \{0\}$
\beq
\IV_k\ \cong\  \{\cir{\tau_k(q_1)},\dots, \cir{\tau_k(q_m)}\}
\eeq  
i.e. the vertices of height $k$ are in bijection with the set of 
Galois orbits of 
the exponential factors truncated at $k$. 
\end{lem}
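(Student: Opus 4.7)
The plan is to construct an explicit bijection $\Phi_k \colon \IV_k \to \{\langle \tau_k(q_i)\rangle\}_{i=1,\ldots,m}$ directly from the construction of $\cT(\Theta)$. Given a vertex $v \in \IV_k$, I would pick any leaf $i$ such that $v$ lies on the full branch $\cB_i$ and define $\Phi_k(v) := \langle \tau_k(q_i)\rangle$. Surjectivity would be immediate since $\cB_i$ meets height $k$ at a unique vertex $v_i$ with $\Phi_k(v_i) = \langle\tau_k(q_i)\rangle$; the essential content is well-definedness and injectivity.

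For well-definedness, I would argue as follows. If $v \in \cB_i \cap \cB_j$, then by the gluing prescription used to build $\cT(\Theta)$, the two branches have been identified at height $k$, which occurs precisely when $k \geq g_{ij} = \succr(f_{ij})$. In the case $k=0$ the only possibility is $I_i = I_j$ (so the claim is trivial), while for $k \in A$ the inequality forces $k > f_{ij}$. Using compatibility of $Q$ (so we may take $q_{i,c} = q_{j,c}$ after a Galois conjugation), the different parts $q_{i,d}, q_{j,d}$ have slope $\leq f_{ij} < k$, so $\tau_k$ annihilates them. Hence $\tau_k(q_i) = \tau_k(q_{i,c}) = \tau_k(q_{j,c}) = \tau_k(q_j)$, and the Galois orbits agree.

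For injectivity, suppose $\langle \tau_k(q_i)\rangle = \langle \tau_k(q_j)\rangle$ at height $k \in A$. After replacing $q_j$ by a Galois conjugate we can assume $\tau_k(q_i) = \tau_k(q_j)$ as actual polynomials. By definition of the common/different decomposition, this means all monomials of slope $\geq k$ in both $q_i$ and $q_j$ coincide, so the common part extends at least up to slope $k$ and therefore $f_{ij} < k$. Since $k$ is admissible, this yields $k \geq \succr(f_{ij}) = g_{ij}$, so $\cB_i$ and $\cB_j$ have been glued at height $k$ and their height-$k$ vertices coincide. The $k=0$ case is handled separately by noting $\tau_0 = \mathrm{id}$ and that the leaves $\IV_0$ were by construction placed in bijection with the distinct active Stokes circles.

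The main obstacle, to my mind, is the bookkeeping around compatibility: one must keep track of the freedom to replace each $q_j$ by a Galois conjugate and ensure that the compatibility hypothesis on $Q$ promotes equality of Galois orbits (at levels above the relevant fission exponent) to honest equality of truncations, so that the gluing data recorded by the $g_{ij}$ matches the combinatorics of the truncations. Once this is in place the argument reduces to unwinding the definitions of $f_{ij}$, $g_{ij}$, and the height map $h$.
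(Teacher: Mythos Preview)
Your proof is correct and follows the same approach as the paper's (very terse) argument: both establish that two full branches are identified at height $k \in A$ precisely when the Galois orbits of the $k$-truncations coincide, by unwinding the definition $g_{ij} = \succr(f_{ij})$. One minor point: your appeal to compatibility of $Q$ in the well-definedness step is unnecessary, since the target of $\Phi_k$ consists of Galois orbits rather than actual truncations, and $\langle q_{i,c}\rangle = \langle q_{j,c}\rangle$ already gives $\langle \tau_k(q_i)\rangle = \langle \tau_k(q_j)\rangle$ directly once $k > f_{ij}$.
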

\pf
Consider two distinct 
circles $\<q_1\>$,$\<q_2\>$, 
corresponding to two leaves.
Consider the minimal element $k$ of $A$ such that
$\<\tau_k(q_1)\>=\<\tau_k(q_2)\>$.
Then, by definition, $k=g_{12}=\succr(f_{12})$.
\epf

In particular if $k>l$ are two  admissible exponents (or zero), 
we have a surjective map $\phi_{kl} \colon\IV_l\onto \IV_k$ defined by 
$\phi_{kl}(\cir{\tau_l(q_i)})= \cir{\tau_k(q_i)}$, and this determines 
the structure of the tree, by defining the unique parent of each node (if $k,l\in A$ are consecutive).
This also proves all the gluings of the full branches 
can be done consistently.

From this viewpoint, two elements 
$\cir{\tau_l({q_i})}$, $\cir{\tau_l({q_j})}\in \IV_l$ 
are descendants of the same vertex in $\IV_k$, where $l<k$ 
if they have the same truncation to exponent $k$, 
i.e. if $\cir{\tau_k({q_i})}=\cir{\tau_k({q_j})}$. 
Furthermore, if $\cir{q_i}$ and $\cir{q_j}$ are two active circles, corresponding to two leaves, in $\IV_0$, their closest common ancestor in the tree corresponds to (the Galois orbit of) their common part (this follows immediately from the definition of the common part of two exponential factors).

\begin{remark}\label{rmk: eggers}
Note that the visual image of a tree is clear by thinking about the eigenvalues of a matrix of meromorphic functions
($\sim$ a meromorphic Higgs field).
On the differential equations side
this idea is embedded in the ``fission'' picture, 
thinking about the growth/decay of the functions
$\exp(q(x))$ as $x\to \infty$ along a ray, and 
can be traced back to the Stokes diagram 
in \cite{stokes1857} p.116, 
reproduced on the cover of \cite{twcv}
(see also the pictures in \cite{fission, tops}).
However our exact definition of fission tree 
is quite subtle, in order for the main 
results to follow cleanly
(i.e. Thm. \ref{thm: charn of realzns}, 
\eqref{eq: explicit config} parameterising the 
configuration spaces in terms of points of the fission tree
and in turn Cor. \ref{cor: config prod decomp}, 
giving the product decomposition).
In particular the simpler definition of trees
(as in \cite{wall-eggers}) 
on the singularity theory 
(spectral curve) side of the wild nonabelian Hodge correspondence
are much less useful for either of these 
aims. 
Specifically if we took the
definition in \cite{wall-eggers} 
and transposed it to our setting (shifting and truncating suitably), then the resulting definition is too local: the location of the branchpoints $\IY$ would then just depend on the adjacent full branches so the product decomposition will not work cleanly and moreover 
some of the points of the tree that should parametrise 
distinct coefficients are identified.%
\end{remark}
\subsection{Truncated fission trees}\label{ssn: truncatedtrees}

Since any integer is an admissible exponent of any exponential factor, the set of admissible exponents of any irregular type $Q$ 
is unbounded from above.
For the configuration spaces we are interested in, the admissible exponents
are bounded by the Poincar\'e--Katz rank.  
Thus we will consider the ``truncated fission tree'' 
$\cT^\flat=\cT^\flat(Q)$ 
by defining the {\em root vertex} to be that of height
\beq\eta := \floor{\Katz(Q)+1}\eeq
and then removing all nodes/edges above the root
(and marking the root as empty/in-admissible). 
Note that $\eta$ 
is the smallest integer greater than
the Poincar\'e--Katz rank of $Q$ (the largest slope),
so the admissible nodes $\IA^\flat\subset\cT^\flat$, the subset of $\IA$ below the root,
are exactly the nodes that will contribute to the configuration space 
(as these are the admissible nodes of height 
$\le \Katz(Q)$). 
When drawing pictures of trees we will truncate as above, but also we 
will stop at the smallest admissible exponent, so the leaves will not be drawn. 
If we are just given a fission tree (and not an irregular type/class)
then we will use 
the ``minimal truncation'' at $\eta=\floor{k+1}$ where 
$k=\Katz(\cT) := \max(\{0, h(\IL), \max(h(\IY))-1\})$.
Indeed one can check that in the trace-free case 
$\Katz(\cT)$ is the Poincar\'e--Katz rank of any irregular class with fission tree $\cT$.
Using this we can attach an integer to any fission tree: 
\begin{defn}\label{defn: moduli number}
The {\em moduli number} $\mu(\cT)$ of a fission tree is
one plus the number of admissible vertices below the root, 
minus the number of integers below the root height: 
\begin{align}
\mu(\cT)  := &   1+ \bigl\vert{\IA^\flat}\bigl\vert - (\eta-1) = 
\bigl\vert{\IA^\flat}\bigr\vert+2 -\eta\\
=& 1+ \bigl\vert A_1\setminus \IN \bigl\vert + 
\sum_2^m \bigl\vert A_i\cap [0,f_i] \bigl\vert
\end{align}
\end{defn}
\noindent
It is clear that $\mu$ just depends on the fission tree and 
not the irregular class. The dimension of 
the special configuration space is $\mu-1$. 
In practice the second formulation is more convenient 
(when the tree is labelled by $1,\ldots,m$)
where $A_i=h(\IA_i)$, $f_i = \min_{j<i}(f_{ij})$
and $\bigl\vert A_1\setminus \IN \bigl\vert$ is as in 
\eqref{eq: non-int adm formula}. 

\begin{eg} \label{eg: bigexample}
Consider the   pointed irregular type 
$Q=[(1,q_1),\dots,(1,q_4)]$ with 
\[
q_1=x^{3/2}+x,\quad q_2=x^{3/2}+2x, \quad q_3=x^{1/3}, \quad q_4=2x^{1/3}
\] 
having rank $10$. Thus the root height $\eta=2$ and the set of admissible exponents of $Q$ smaller 
than $\eta$ is $\{3/2,1,1/2,1/3\}$. 
The labelled fission tree is drawn in Fig. 
\ref{ex:first_example_fission_tree}. We will always draw the mandatory vertices (the internal levels $\IL$)  as black %
circles, the \authorised{} vertices ($\IA\setminus\IL$) 
as white 
circles, and the empty vertices without any decoration. The heights are indicated on the left. The root is drawn as a black square.
\begin{figure}[h]
\begin{center}
\begin{tikzpicture}
\tikzstyle{mandatory}=[circle,fill=black,minimum size=5pt,draw, inner sep=0pt]
\tikzstyle{authorised}=[circle,fill=white,minimum size=5pt,draw,inner sep=0pt]
\tikzstyle{empty}=[circle,fill=black,minimum size=0pt,inner sep=0pt]
\tikzstyle{indeterminate}=[circle,densely dotted,fill=white,minimum size=5pt,draw, inner sep=0pt]
\tikzstyle{root}=[fill=black,minimum size=5pt,draw,inner sep=0pt]
    \node[root] (R) at (1.5,4){};
    \node[mandatory] (A1) at (0.5,3){};
    \node[empty] (A2) at (2.5,3){};
    \node[authorised](B1) at (0,2){};
    \node[authorised] (B2) at (1,2){};
    \node[authorised] (B3) at (2.5,2){};
    \node[authorised] (C1) at (0,1){};
    \node[authorised] (C2) at (1,1){};  
    \node[empty] (C3) at (2.5,1){};
    \node[empty] (D1) at (0,0){};
    \node[empty] (D2) at (1,0){};  
    \node[mandatory] (D3) at (2,0){};
    \node[mandatory] (D4) at (3,0){};
  \foreach \from/\to in {R/A1,R/A2,A1/B1,A1/B2,A2/B3,B1/C1,B2/C2,B3/C3, C1/D1,C2/D2,C3/D3,C3/D4}
   \draw (\from) -- (\to);
    \draw (-1.3,3) node {$3/2$};
    \draw (-1.3,2) node {$1$};
    \draw (-1.3,1) node {$1/2$};
    \draw (-1.3,0) node {$1/3$};
    \draw (0,-1) node {$q_1$};
    \draw (1,-1) node {$q_2$};
    \draw (2,-1) node {$q_3$};
    \draw (3,-1) node {$q_4$};
\end{tikzpicture}
\caption{The fission tree $\cT^\flat$ associated to $Q$ 
(not drawn isometrically). The labelling corresponds to the numbering of the $q_i$. The multiplicities of the leaves are all equal to $1$.}\label{ex:first_example_fission_tree}
\end{center}
\end{figure}
\end{eg}
\noindent
We will see below that the configuration space has dimension $8$ (the number of nonempty nodes below the root), and 
further (in Thm. \ref{thm: charn of realzns}) that 
there are 
three types of conditions on these $8$ coefficients:
those from each black vertex $\bullet$ should be non-zero, 
those from the two inconsequential siblings $\circ$
should be distinct, and
those from the two mandatory siblings $\bullet$
should have distinct $N$th powers (where $N=3$ in this example). 
For the special configuration space, the dimension is 
$7=8-1$ since there is one positive integer height below the root.
The moduli number $\mu(\cT)$ is eight.

\subsection{Realisations of a fission tree}\

Suppose we are given a 
fission tree $\cT=(\cT,\IV,\IA,\IL,h,n)$, 
and a map $c\colon\IA\to \IC$
with finite support, i.e. $c(v)=0$ for all but a finite number 
of the admissible vertices $v\in \IA\subset \cT$.

Then for each leaf $i\in \IV_0$ of the tree
we can define an exponential factor
\beq\label{eq: exp factor from map c}
q_i = \sum_{v\in \IA_i} c(v)x^{h(v)}
\eeq
summing over the admissible vertices $\IA_i=\cB_i\cap \IA$
on the $i$th full branch.
Thus $c$ gives the coefficients of 
a list of exponential factors.
Thus if the tree is labelled 
by some isomorphism $\psi\colon\{1,\ldots,m\}\cong \IV_0$ then
we get an element 
\beq\label{eq: pit from map c}
Q_c = [(n_1,q_1),\ldots,(n_m,q_m)],
\eeq
where $q_i$ is determined by $c$ as above, and $n_i=n(i)$ 
is the multiplicity of the leaf $i$.

We will say that the coefficient map $c$ is a {\em realisation} 
of the tree $\cT$ if $Q_c$ is a \mit{} and 
$\cT(Q_c)\cong \cT$, i.e. if
the fission tree of $Q_c$ is isomorphic to $\cT$ 
(the labellings match up by construction).

Thus we wish to make explicit the conditions on $c$ for it to be a realisation.

In effect we just need to check that the tree determined by $Q_c$ has the desired branching and mandatory nodes. 
Let us focus on a single branchpoint.   
Let $l>k$ be two consecutive heights of the tree, and 
let $v\in \IV_l$ be a vertex with $n$ children 
$\Ch(v)=\{w_1,\dots, w_n\}\subset \IV_k$.

Let $q$ be the exponential factor determined by $c$ 
at the node $v$, so that $q=\tau_l(q_j)$ for any leaf $j$ that is a descendant of $v$. 
Let $q_i=q + c_iz^k$ (where $c_i=c(w_i)$) be the corresponding exponential factors of the children, 
$i=1,\ldots,n$. 
Thus for $c$ to be a realisation, the $c_i$ have to be such that the Galois orbits $\cir{q_i}$ are pairwise distinct. 
Proposition \ref{proposition:types_of_fission_two_factors} allows us to characterise when this is the case. Set $r^+ := \ram(q)$, and write $k=\frac{s^-}{r^-}$, with $s^-$ and $r^-$ coprime.

\begin{prop}\label{prop: 3types of branching}
\label{proposition:types_of_fission}
Let $q$ be an exponential factor with all its exponents greater than $k$, and let $c_1,\dots,c_n\in \IC$ and consider $q_i=q+c_i z^k$ 
for $i=1,\dots, n$. Now 

\begin{enumerate}
\item If $r^-$ divides $r^+$, then $k$ is not an internal level of any of the $q_i$. Then the $\cir{q_i}$ are distinct if and only if $c_i\neq c_j$ 
for $1\leq i<j\leq n$.
\item Otherwise, let us assume that $r^-$ does not divide $r^+$, and let $N := \frac{\mathrm{lcm}(r^-,r^+)}{r^+}$. Then $k$ is a level of $\cir{q_i}$ if and only if $c_i\neq 0$, otherwise if $c_i=0$, $\cir{q_i}\in \IV_k$ is an empty vertex. Now the $\cir{q_i}$ are distinct if and only if 
\begin{enumerate}
\item Either one of the $c_i$, $i=1,\dots, n$, say $c_{i_0}$, is equal to zero and we have $c_i\neq 0$ for $i\neq i_0$ and $c_i\neq  \zeta c_j$ 
for %
$i,j\in \{1,\ldots,n\}\setminus \{i_0\}$
for any $N$-th root of unity $\zeta$.
\item Or, all of the $c_i$ are non-zero, and we have $c_i\neq  \zeta c_j$ for $1\leq i<j\leq n$ for any $N$-th root of unity $\zeta$.
\end{enumerate}
\end{enumerate}
\end{prop}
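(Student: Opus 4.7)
The plan is to reduce the statement to pairwise comparisons and invoke Proposition \ref{proposition:types_of_fission_two_factors}. For any indices $i\ne j$, both $q_i$ and $q_j$ coincide with $q$ on monomials of slope $>k$, so by hypothesis $q$ is a compatible common part of the pair $(q_i,q_j)$ in the sense of Definition \ref{defn: compatible pit}. Consequently the two-factor proposition directly characterises whether $\cir{q_i}\ne\cir{q_j}$ in terms of the single pair of scalars $(c_i,c_j)$. Since the collection $\{\cir{q_1},\dots,\cir{q_n}\}$ is pairwise distinct iff this inequality holds for every pair, the proof reduces to aggregating the pairwise conditions and rewriting them as a single combinatorial statement, split along the dichotomy $r^-\mid r^+$ or $r^-\nmid r^+$.

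For case 1 I would apply part 1 of Proposition \ref{proposition:types_of_fission_two_factors}: under $r^-\mid r^+$ the exponent $k=s^-/r^-$ already has denominator dividing $\ram(q)=r^+$, so $\ram(q_i)=r^+$ regardless of $c_i$ and the orbits $\cir{q_i},\cir{q_j}$ are distinct iff $c_i\ne c_j$. In particular $k$ is never a new level of any $\cir{q_i}$, and assembling the condition $c_i\ne c_j$ over every pair gives exactly the first case of the statement.

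For case 2 I would set $N=\mathrm{lcm}(r^-,r^+)/r^+$ and invoke part 2 of Proposition \ref{proposition:types_of_fission_two_factors}. The structural observation is that when $r^-\nmid r^+$, adding a nonzero monomial $c_i z^k$ raises the ramification from $r^+$ to $\mathrm{lcm}(r^-,r^+)$, so $k$ is a level of $\cir{q_i}$ precisely when $c_i\ne 0$ (and the $i$th child of $v$ is then a mandatory vertex), whereas if $c_i=0$ the $i$th child is empty. The pairwise rule from the two-factor proposition is: $\cir{q_i}\ne\cir{q_j}$ iff not both $c_i,c_j$ vanish and, when both are nonzero, $c_i\ne\zeta c_j$ for every $N$-th root of unity $\zeta$. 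Aggregating: at most one $c_i$ may equal zero, and the nonzero $c_i$'s must lie in distinct orbits of the multiplicative action of the $N$-th roots of unity. This is precisely the disjunction of sub-cases 2(a) (exactly one vanishing coefficient) and 2(b) (none vanishing), finishing the proof.

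The only delicate point, and in practice a very mild one, is checking that the pairwise non-conjugacy condition in sub-case 2(b) is self-consistent and non-redundant; this follows because multiplication by $N$-th roots of unity acts freely on $\mathbb{C}^{*}$, so its orbits partition $\mathbb{C}^{*}$ into sets of size $N$, and the requirement "all nonzero $c_i$ lie in pairwise distinct orbits" is a clean rephrasing of "$c_i\ne\zeta c_j$ for all $1\le i<j\le n$ and all $\zeta^N=1$". Beyond this bookkeeping, no further input is needed.
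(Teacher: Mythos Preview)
Your proposal is correct and follows essentially the same approach as the paper: the paper's proof is a one-line reduction to Proposition~\ref{proposition:types_of_fission_two_factors}, and you have simply spelled out in more detail how the pairwise conditions aggregate into the three cases. No further input is needed.
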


\begin{proof}
This follows immediately from the corresponding cases in Proposition \ref{proposition:types_of_fission_two_factors}.
\end{proof}

This implies that there are only three possible types of 
fission at a vertex $v$ in a fission tree, 
which correspond to the three cases 
 1, 2(a), and 2(b) of the proposition, and they 
 yield the axiom 5) in the definition of fission tree. 
In the case 1, since $k$ is not an internal level 
of any of the $q_i$, all the corresponding vertices are \authorised{}, which corresponds to the picture below (the parent vertex is dotted on the picture to indicate that it could be mandatory, \authorised{} or empty).
 
\begin{center}
\begin{tikzpicture}
\tikzstyle{mandatory}=[circle,fill=black,minimum size=5pt,draw, inner sep=0pt]
\tikzstyle{authorised}=[circle,fill=white,minimum size=5pt,draw,inner sep=0pt]
\tikzstyle{empty}=[circle,fill=black,minimum size=0pt,inner sep=0pt]
\tikzstyle{indeterminate}=[circle,densely dotted,fill=white,minimum size=5pt,draw, inner sep=0pt]
    \node[authorised] (A1) at (1,0){};
    \node[authorised] (A2) at (2,0){};
    \node[authorised] (A3) at (3,0){};
    \node[authorised] (A4) at (4,0){};
    \node[indeterminate] (B1) at (2.5,1){};   
  \foreach \from/\to in {A1/B1,A2/B1,A3/B1,A4/B1}
    \draw (\from) -- (\to);
\end{tikzpicture}
\end{center}

Otherwise, in the case $2$ the vertices corresponding to the $q_i$ are all mandatory provided they are non-empty. There are two possibilities: either, in the case 2(a), there is one empty vertex, which corresponds to the figure on the left below, or, in case 2(b) there is no empty vertex and all vertices are mandatory, which corresponds to the figure on the right below.

\begin{center}
\begin{tikzpicture}
\tikzstyle{mandatory}=[circle,fill=black,minimum size=5pt,draw, inner sep=0pt]
\tikzstyle{authorised}=[circle,fill=white,minimum size=5pt,draw,inner sep=0pt]
\tikzstyle{empty}=[circle,fill=black,minimum size=0pt,inner sep=0pt]
\tikzstyle{indeterminate}=[circle,densely dotted,fill=white,minimum size=5pt,draw, inner sep=0pt]
    \node[mandatory] (A1) at (1-3,0){};
    \node[mandatory] (A2) at (2-3,0){};
    \node[mandatory] (A3) at (3-3,0){};
    \node[empty] (A4) at (4-3,0){};
    \node[indeterminate] (B1) at (2.5-3,1){}; 
    \node[mandatory] (C1) at (1+3,0){};
    \node[mandatory] (C2) at (2+3,0){};
    \node[mandatory] (C3) at (3+3,0){};
    \node[mandatory] (C4) at (4+3,0){};
    \node[indeterminate] (D1) at (2.5+3,1){};   
  \foreach \from/\to in {A1/B1,A2/B1,A3/B1,A4/B1, C1/D1,C2/D1,C3/D1,C4/D1}
    \draw (\from) -- (\to);
\end{tikzpicture}
\end{center}

Thus we can write down the exact conditions 
for $c$ to be a realisation. 
Recall that two nodes are ``siblings'' if they have the same parent node.
In summary the result is the following:
 
\begin{thm}\label{thm: charn of realzns}
The map $c\colon\IA\to \IC$ is a realisation of $\cT$ if and only if 

1) $c(\IL)\subset \IC^*$, i.e.  $c(v)\neq 0$ for any mandatory node $v$,  

2) $c(u)\neq c(v)$ for any pair $u,v$ of \authorised{} siblings,

3) $c(u)^N\neq c(v)^N$ for any pair $u,v$ of mandatory siblings
where $N$ is the relative ramification of the parent of
$u,v$ (defined in Rmk. \ref{rmk/dfn: relram}).
\end{thm}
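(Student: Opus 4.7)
The plan is to verify that the pointed irregular type $Q_c=[(n_1,q_1),\ldots,(n_m,q_m)]$ built from $c$ via \eqref{eq: exp factor from map c} and \eqref{eq: pit from map c} has fission tree isomorphic to $\cT$ precisely when the three stated conditions hold. Since by construction the vertices of $\cT(Q_c)$ at a given height $k$ correspond to the distinct Galois orbits $\langle\tau_k(q_i)\rangle$, the argument proceeds by descending induction on height, verifying at each admissible height that the induced branching pattern at every vertex matches the one prescribed by axiom 5) of Definition \ref{defn: fissiontree}.

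The forward direction (conditions imply realisation) reduces to a local analysis at each branch vertex $v \in \IY$ of $\cT$, via direct application of Proposition \ref{prop: 3types of branching}. Writing $k = h(w)$ for the common height of the children $w \in \Ch(v)$, $r^+ = \Ram(v)$, and $r^-$ for the denominator of $k$ in lowest terms, we distinguish three sub-cases according to axiom 5). If all children are \authorised{}, then $k$ is not a new internal level and so $r^-$ divides $r^+$; Prop. \ref{prop: 3types of branching}(1) then tells us the children have distinct Galois orbits iff $c(u)\neq c(v)$ for siblings, which is condition 2). If exactly one child is empty and the rest are mandatory, then $r^-$ does not divide $r^+$, and with $N=\lcm(r^-,r^+)/r^+$ the empty child corresponds to coefficient zero (being outside $\IA$), while condition 1) ensures the mandatory siblings have nonzero coefficients and condition 3) translates exactly into the hypothesis of Prop. \ref{prop: 3types of branching}(2a). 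If all children are mandatory, the $N$-th power inequality of Prop. \ref{prop: 3types of branching}(2b) is exactly condition 3), and condition 1) again ensures nonzero coefficients. Combining these local checks across all branch vertices, and noting that condition 1) additionally ensures each $v\in\IL$ contributes an actual internal level to some $q_i$, shows that the levels, admissible sets and fission exponents of $Q_c$ match those of $\cT$; by Lemma \ref{lem: fission trees and data} this gives $\cT(Q_c) \cong \cT$.

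The converse uses the same local dictionary: if any of the three conditions fails at some branch vertex, Prop. \ref{prop: 3types of branching} produces two children whose Galois orbits coincide (or else a mandatory vertex whose coefficient vanishes), forcing $\cT(Q_c)$ to differ from $\cT$ at that vertex, either by collapsing a branching or by demoting a mandatory vertex to an empty one. The main obstacle is bookkeeping: correctly tracking the ramification index $\Ram(v)$ along the path from the root down to each branch vertex, so that the relative ramification $N$ used in condition 3) matches the one appearing in Prop. \ref{prop: 3types of branching}(2), and ensuring consistency between the local Galois analyses so that they glue into a global tree isomorphism. Organising the induction from the root downward ensures that at each step the truncations $\tau_k(q_i)$ above the current height have already been controlled, so the only new information to process is the value of $c$ at the current height.
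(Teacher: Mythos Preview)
Your proposal is correct and follows essentially the same approach as the paper: the paper's proof is a terse two-sentence sketch (condition 1) ensures each full branch has the correct internal levels; conditions 2) and 3) ensure the right branching via the preceding proposition), and you have simply unpacked this in more detail by explicitly matching the three cases of axiom 5) to the three cases of Proposition \ref{prop: 3types of branching} and spelling out the converse.
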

\pf
The first condition (and the definition of $\IA$) implies each full branch has the correct internal levels. Then 2) and 3) show that the tree of $Q_c$ has the right branching. 
\epf

Note that 2),3) can be combined into the single statement: 
if $u,v$ are admissible ($=$non-empty) siblings then 
$c(u)^N\neq c(v)^N$ where $N$ is the relative ramification of the parent of $u$ or $v$ 
(since the relative ramification of the parent of an \authorised{}
vertex is $1$).

\begin{cor}\label{cor: realisability}
Any fission tree admits a realisation.
\end{cor}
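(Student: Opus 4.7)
The plan is to construct a finitely supported coefficient map $c\colon\IA\to\IC$ satisfying the three conditions of Theorem \ref{thm: charn of realzns}, by working independently at each branch vertex. The key observation is that the sibling constraints 2) and 3) only concern children of branch vertices $v\in\IY$, and $\IY$ is finite: each $v\in\IY$ is the nearest common ancestor of some pair of leaves, so $\abs{\IY}\le\binom{\abs{\IV_0}}{2}$, which is finite since $\IV_0$ is. Together with finiteness of $\IL$, this ensures $c$ can indeed be supported on the finite set $\IL\cup\bigcup_{v\in\IY}\Ch(v)$.

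I would first initialise $c\equiv 0$, then reassign values as follows. For each mandatory vertex $v\in\IL$ that is not a child of any branch vertex (hence has no siblings), put $c(v):=1$. For each branch vertex $v\in\IY$, axiom 5) of Definition \ref{defn: fissiontree} places the children into three cases. In case 1 (all children \authorised{}), I would choose pairwise distinct values for the admissible children $w_1,\ldots,w_n$, for instance $c(w_i):=i$. In cases 2a and 2b (the non-empty children being mandatory), the empty child, if any, carries no coefficient, and I would pick the values $c(w_k)\in\IC^*$ at the remaining mandatory children inductively so that $c(w_k)^N\neq c(w_j)^N$ for all $j<k$, where $N$ is the relative ramification of $v$ in the sense of Remark \ref{rmk/dfn: relram}.

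The main thing to verify is that the inductive choice in cases 2a and 2b is always possible: at step $k$, the forbidden values of $c(w_k)$ form the finite set $\{0\}\cup\{\zeta c(w_j)\mid j<k,\ \zeta^N=1\}$, of cardinality at most $1+(k-1)N$, whereas $\IC$ is infinite. The analogous claim in case 1 is trivial. Conditions 1)--3) of Theorem \ref{thm: charn of realzns} then hold by construction: mandatory vertices receive nonzero values either in the first stage or as case-2 branch children, \authorised{} siblings occur only in case-1 branchings where they have been made distinct, and mandatory siblings occur only in cases 2a/2b where their $N$-th powers have been separated.

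There is no real obstacle here beyond bookkeeping; all the work has already been done in Proposition \ref{proposition:types_of_fission}, which forced the three-case structure of axiom 5) used above.
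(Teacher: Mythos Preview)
Your proof is correct and follows essentially the same idea as the paper's: both observe that the conditions of Theorem \ref{thm: charn of realzns} amount to finitely many avoidable constraints on the coefficients of sibling sets, so a valid choice of $c$ exists. The paper compresses this into a one-line Zariski-openness argument (``finitely many Zariski-closed conditions on the coefficients of any set of siblings, so the space of choices is non-empty''), whereas you spell out an explicit construction vertex by vertex; the extra bookkeeping you do (finiteness of $\IY$, finite support, the case split via axiom 5)) is exactly what underlies the paper's terse claim.
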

\pf
There are just a finite number of Zariski-closed conditions on the coefficients of any set of siblings, so the space of choices of $c$ is non-empty.
\epf

This immediately gives a clearer description of the configuration spaces.
First note that the definition of the fission tree implies:

\begin{lem}\label{lem: coeffrestrnsfromtree}
Suppose $Q$ is any compatible pointed irregular type
with  (labelled) fission tree  $\cT=\cT(Q)$, 
let $r=\Ram(Q)$, $x=t^r$ so that 
\beq\label{eq: naivecoeffs2}
Q := 
[
(n_1,q_1), \dots,
(n_m ,q_m)],\qquad q_i=\sum_{j=1}^s a_{i,j}t^{j},
\eeq 
for some collection of coefficients 
$\mathbf{a}=(a_{i,j})\in \IC$. 
Then there is a unique 
realisation $c=c_Q\colon\IA\to \IC$ of $\cT$
with $c(v)=a_{i,k}$  for all $i,k$ where 
$v=\<\tau_{k/r}(q_i)\>\in \IA$ is the vertex of $\cT$ 
determined
by the truncation of the exponential factor $q_i$.
\end{lem}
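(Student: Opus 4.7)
The plan is to prove the lemma in two steps: first establish that the formula $c(v) = a_{i,k}$ gives a well-defined function $c\colon \IA \to \IC$, then verify that this $c$ satisfies the three conditions of Theorem \ref{thm: charn of realzns} for being a realisation.

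For well-definedness, fix an admissible vertex $v \in \IA$ at height $k/r$. By axiom 4) in the definition of a fission tree, there is at least one leaf $i$ such that $v \in \IA_i$, and by construction of $\cT(Q)$ such a leaf corresponds to an index with $v = \<\tau_{k/r}(q_i)\>$. Suppose $v = \<\tau_{k/r}(q_i)\> = \<\tau_{k/r}(q_{i'})\>$ for a second leaf $i'$. Then the compatibility hypothesis on $Q$ (Definition \ref{defn: compatible pit}) upgrades this equality of Galois orbits to an equality $\tau_{k/r}(q_i) = \tau_{k/r}(q_{i'})$. Reading off the coefficient of $t^k$ on both sides gives $a_{i,k} = a_{i',k}$, so $c(v)$ is independent of the choice of $i$. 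Uniqueness of $c$ under the prescribed formula is automatic.

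For the realisation conditions, I will check (1), (2), (3) of Theorem \ref{thm: charn of realzns} in turn. If $v \in \IL$ is mandatory at height $k/r$ on the branch $\cB_i$, then $k/r \in L_i = \Levels(q_i)$. By the characterisation of levels in the proof of Proposition \ref{prop: one circle classn}, such an exponent is necessarily a height at which the coefficient of $q_i$ is non-zero, so $c(v) = a_{i,k} \neq 0$. Now let $u \neq v$ be two admissible siblings at height $k/r$ with common parent $w$ at height $l/r > k/r$, corresponding (via the identification in the paragraph after Definition \ref{defn: fissiontree}) to distinct Galois orbits $\<\tau_{k/r}(q_i)\> \neq \<\tau_{k/r}(q_j)\>$ for some $i,j$, while $\<\tau_{l/r}(q_i)\> = \<\tau_{l/r}(q_j)\>$. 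Writing $q' = \tau_{l/r}(q_i)$ (which, by compatibility, equals $\tau_{l/r}(q_j)$), we have $\tau_{k/r}(q_i) = q' + a_{i,k} x^{k/r}$ and similarly for $j$. Applying Proposition \ref{prop: condns at fissioexpt} to this local branchpoint, with $r^+ = \Ram(w)$ and $N$ the relative ramification of $w$: if $u,v$ are \authorised{}, we are in case 1 ($N = 1$) and the condition $a_{i,k} \neq a_{j,k}$ is exactly condition (2); if $u,v$ are mandatory, we are in case 2b ($N > 1$) and the condition $a_{i,k}^N \neq a_{j,k}^N$ is exactly condition (3). The mixed case 2a where one sibling is empty is automatic since empty siblings are excluded from $\IA$.

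There is no real obstacle: the only subtlety is recognising that compatibility is precisely the hypothesis needed to reconcile the ``downstairs'' data (the coefficients $a_{i,k}$ attached to branches) with the ``upstairs'' data (values $c(v)$ attached to vertices of the merged tree). Once this is noted, the three conditions of Theorem \ref{thm: charn of realzns} follow by unpacking definitions and applying Proposition \ref{prop: condns at fissioexpt} branchpoint by branchpoint.
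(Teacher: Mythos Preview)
Your proof is correct, but it takes a longer route than the paper's. The paper's argument is simply: verify (i) that $a_{i,k}=a_{j,k}$ whenever $\<\tau_{k/r}(q_i)\>=\<\tau_{k/r}(q_j)\>$ (your well-definedness step, via compatibility), and (ii) that $a_{i,k}=0$ whenever the vertex $\<\tau_{k/r}(q_i)\>$ is \emph{not} admissible. Together these say precisely that $Q_c=Q$, so $c$ is a realisation tautologically, since $\cT(Q_c)=\cT(Q)=\cT$.

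You instead verify the three conditions of Theorem~\ref{thm: charn of realzns} directly. This works, but note that your equation $\tau_{k/r}(q_i)=q'+a_{i,k}x^{k/r}$ (with $q'=\tau_{l/r}(q_i)$ and $k/r,l/r$ consecutive heights) implicitly uses the paper's condition (ii): you need that $q_i$ has no monomial at any exponent strictly between $k/r$ and $l/r$, i.e.\ that every exponent actually appearing in $q_i$ lies in $A_i\subset A$. This is true (a non-level exponent must have denominator dividing the current ramification, hence lies in $\Inc(L_i)$), but it is exactly the content of the paper's shortcut. So your argument is not really avoiding that step, just absorbing it silently. The paper's route is cleaner because once $Q_c=Q$ is established, there is nothing further to check.
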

\pf
This amounts to verifying two conditions, which are now straightforward:
1) $a_{i,k}=0$ if $\<\tau_{k/r}(q_i)\>\in \IV\setminus\IA$, i.e. if the node of $\cT$ determined by the circle $\<\tau_{k/r}(q_i)\>$ is not admissible, and
2) $a_{i,k}=a_{j,k}$ if $\<\tau_{k/r}(q_i)\>=\<\tau_{k/r}(q_j)\>$, 
i.e. if the truncations determine the same node of $\cT$.
\epf

It follows that the configuration space
$\pureconfig(Q)$ 
of any compatible \mit{} $Q$ 
is isomorphic to the space
\begin{align}
\pureconfig(\cT^\flat)  := & \{c\colon\IA\to \IC\st \text{$c$ is a realisation of $\cT, \Katz(c)\le \Katz(Q) $}\} \label{eq: explicit config}
\\\notag
=& \{c\colon\IA^\flat\to \IC\st \text{1),2),3) of 
Thm. \ref{thm: charn of realzns} hold for $c$}\} 
\end{align}
of realisations of 
 the truncated fission tree $\cT^\flat$, 
Here $\IA^\flat$ is the set of admissible nodes of 
$\cT^\flat$ and 
$\Katz(c)=\max\{h(a)\st a\in \IA, c(a)\neq 0\}$
is the height of the realisation.
In other words we have established the following:
\begin{thm} \label{thm: configs from realzns}
If $Q$ is a compatible \mit{} 
then 
$\pureconfig(Q) \cong \pureconfig(\cT^\flat)$, where $\cT^\flat$ is the truncated 
labelled fission tree of $Q$.
\end{thm}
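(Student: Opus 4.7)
The plan is to construct explicit mutually inverse continuous maps between $\pureconfig(Q)$ and $\pureconfig(\cT^\flat)$, built from the preceding results in this section.

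First I would define the forward map $\Phi\colon \pureconfig(Q) \to \pureconfig(\cT^\flat)$ as follows. Given $\mathbf{a}\in \pureconfig(Q)$, by definition $Q_\mathbf{a}\sim Q$. Since $Q$ is compatible, Theorem \ref{thm:admissible_deformation_iff_same_fissiondata} yields that $Q_\mathbf{a}$ is also compatible and shares the labelled fission datum of $Q$; Lemma \ref{lem: fission trees and data} then gives an isomorphism of labelled fission trees $\cT(Q_\mathbf{a})\cong \cT$. Lemma \ref{lem: coeffrestrnsfromtree} produces a unique coefficient map $c := c_{Q_\mathbf{a}}\colon \IA\to \IC$ with the coefficients of $Q_\mathbf{a}$ lying at the appropriate nodes. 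Since the coefficients in $Q_\mathbf{a}$ only involve exponents $\le \Katz(Q)$, the map $c$ is supported on $\IA^\flat$, and by Theorem \ref{thm: charn of realzns} it is a realisation of $\cT^\flat$, so $c\in \pureconfig(\cT^\flat)$.

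Next I would construct the inverse $\Psi$. A realisation $c\in \pureconfig(\cT^\flat)$ assembles into a compatible \mit{} $Q_c$ via \eqref{eq: exp factor from map c}--\eqref{eq: pit from map c}. By Theorem \ref{thm: charn of realzns} and Lemma \ref{lem: fission trees and data}, the labelled fission tree $\cT(Q_c)$ is isomorphic to $\cT = \cT(Q)$, so Theorem \ref{thm:admissible_deformation_iff_same_fissiondata} forces $Q_c \sim Q$. Reading off the coefficients of $Q_c$ as polynomials in $t = x^{1/r}$ gives a vector $\mathbf{a}(c)\in \IC^{ms}$ with $Q_{\mathbf{a}(c)} = Q_c \sim Q$, hence $\mathbf{a}(c)\in \pureconfig(Q)$.

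The verification that $\Phi$ and $\Psi$ are mutually inverse is essentially bookkeeping: starting from $\mathbf{a}\in \pureconfig(Q)$, the relation $Q_\mathbf{a}\sim Q$ forces $a_{i,j} = a_{i',j}$ whenever the truncations of $q_i, q_{i'}$ at $j/r$ represent the same node of $\cT$, and forces $a_{i,j} = 0$ when that node is not admissible or when $j/r > \Katz(Q)$. These are exactly the conditions that characterise the image of Lemma \ref{lem: coeffrestrnsfromtree}, so $\mathbf{a}(c_{Q_\mathbf{a}}) = \mathbf{a}$; conversely $c_{Q_{\mathbf{a}(c)}} = c$ holds by the definition of both maps on each admissible node. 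Both $\Phi$ and $\Psi$ are restrictions of linear maps (projecting onto a subset of coordinates, resp. padding with zeros and duplicating entries), hence continuous, so $\Phi$ is a homeomorphism.

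The only substantive step — and the main obstacle — is the compatibility of the constraints cutting out the two spaces: one must recognise that the numerical equivalence $\sim$ encoded in Definition \ref{defn: config space} is the exact combinatorial shadow of the realisation axioms of Theorem \ref{thm: charn of realzns}. This translation is handled entirely by the chain Theorem \ref{thm:admissible_deformation_iff_same_fissiondata} $\Rightarrow$ Lemma \ref{lem: fission trees and data} $\Rightarrow$ Theorem \ref{thm: charn of realzns}, so with those tools in hand the argument reduces to the dictionary between coefficient vectors and tree-supported functions supplied by Lemma \ref{lem: coeffrestrnsfromtree}.
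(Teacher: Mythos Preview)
Your proposal is correct and follows essentially the same route as the paper: both arguments use Theorem~\ref{thm:admissible_deformation_iff_same_fissiondata} to identify $\pureconfig(Q)$ with compatible \mits{} having the same labelled fission data, then Lemma~\ref{lem: fission trees and data} to translate this into having isomorphic labelled fission trees, and finally Lemma~\ref{lem: coeffrestrnsfromtree} (together with Theorem~\ref{thm: charn of realzns}) to identify such types with realisations of $\cT^\flat$. Your version is simply more explicit about writing down both directions of the bijection and checking continuity, whereas the paper's proof is a terse three-line summary of the same chain of implications.
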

\pf 
By 
Thm. \ref{thm:admissible_deformation_iff_same_fissiondata} 
$\pureconfig(Q)$ is the set of
compatible \mit{} $Q'$ 
with the same labelled fission data as $Q$
and $\Katz(Q')\le \Katz(Q)$.
Then as in Lem. \ref{lem: fission trees and data} 
this is the same as saying 
the labelled fission tree of $Q'$ is isomorphic to that of $Q$ and $\Katz(Q')\le \Katz(Q)$.
Then by Lem. \ref{lem: coeffrestrnsfromtree}, any such $Q'$ arises uniquely as a realisation $c$ of $\cT^\flat$.
\epf

This immediately gives a product decomposition of the configuration space.
Given a compatible  \mit{} $Q$ 
with fission tree $\cT$, 
for any vertex $v\in \IV$ of $\cT$ 
let  $\Ch_\IA(v)=\IA\cap \Ch(v)$ be the set of 
admissible/nonempty children of $v$ and let
$\Ch_\bullet(v) = \IL\cap \Ch(v)$ be  the subset
of mandatory children of $v$ (black vertices). 
Define the local configuration space 
$\pureconfig_v(Q)=\pureconfig_v(\cT)$
for 
the vertex $v\in \IV$: 
\beq
\pureconfig_v(\cT)  :=  \left\{
c\colon\Ch_\IA(v)\to \IC\ \Bigl\vert\ 
\begin{matrix}
c(\Ch_\bullet(v))\subset \IC^*, \text{ and}\\ 
c(u)^N\neq c(w)^N \  \forall u\neq w\in \Ch_\IA(v)
\end{matrix}\right\}
\eeq
where $N$ is the relative ramification of $v$ 
(defined in Rmk. \ref{rmk/dfn: relram}).
$\pureconfig_v(\cT)$ is taken to be a point if $v$ has no non-empty children and otherwise it thus takes the form:
\beq
\pureconfig_v(\cT)\cong X_n := \{ a_1,\dots, a_n\in \IC \;\vert\; a_i\neq a_j \text{ for } i\neq j\},
\eeq
if $v$ has $n$ \authorised{} children, or
\beq
\pureconfig_v(\cT) \cong 
X^*_{n,N} := \{ a_1,\dots, a_n\in \IC \;\vert\; a_i\neq 0, \; a_i^N\neq a_j^N \text{ for } i\neq j\}
\eeq
if $v$ has $n$ mandatory children 
and relative ramification $N$.

\begin{cor}\label{cor: config prod decomp}
Let $Q$ be a compatible \mit{} with fission tree $\cT$ 
and let $\IV^\flat$ be the vertices of its truncated 
fission tree $\mathcal{T^\flat}$.  
The configuration space 
$\pureconfig(Q)$ admits a product decomposition:
\[
\pureconfig(Q)\cong\prod_{v\in \IV^\flat} \pureconfig_v(\cT).
\]
In particular the dimension of 
$\pureconfig(Q)$ is the number 
of admissible (nonempty) vertices of 
the fission tree $\cT$ of height $\le$ the 
Poincar\'e--Katz rank of $Q$.
\end{cor}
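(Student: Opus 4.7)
The plan is to combine Theorem \ref{thm: configs from realzns} with the local characterisation of realisations in Theorem \ref{thm: charn of realzns}. By Theorem \ref{thm: configs from realzns} we have a homeomorphism $\pureconfig(Q)\cong \pureconfig(\cT^\flat)$, so it suffices to exhibit a product decomposition of the space of realisations of the truncated tree.

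First I would observe that the set $\IA^\flat$ of admissible vertices of $\cT^\flat$ partitions canonically as a disjoint union
\[
\IA^\flat = \bigsqcup_{v\in \IV^\flat} \Ch_\IA(v),
\]
since every admissible vertex below the root has a unique parent lying in $\IV^\flat$ (the root itself being empty by convention, it contributes no constraint). Hence a map $c\colon \IA^\flat\to \IC$ is exactly the data of a collection of maps $c_v\colon\Ch_\IA(v)\to \IC$, one for each $v\in\IV^\flat$.

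Next I would note that the three conditions 1), 2), 3) in Theorem \ref{thm: charn of realzns} that characterise realisations are purely \emph{local to siblings}: condition 1) constrains $c$ on individual mandatory vertices, while conditions 2) and 3) only compare values of $c$ at vertices sharing the same parent. Thus the global realisation condition decouples into independent conditions on each factor $c_v$, giving the set-theoretic bijection
\[
\pureconfig(\cT^\flat) \;\cong\; \prod_{v\in \IV^\flat}\pureconfig_v(\cT),
\]
and this is manifestly a homeomorphism (in fact an isomorphism of quasi-affine algebraic varieties) since both sides are cut out of affine space by the same Zariski-open conditions. To identify the factors: by Proposition \ref{prop: 3types of branching} the set $\Ch_\IA(v)$ consists either entirely of \authorised{} children (case 1) or entirely of mandatory children (cases 2a, 2b, where in 2a one further sibling is empty and thus excluded from $\Ch_\IA(v)$). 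In the first case condition 2) gives $\pureconfig_v(\cT)\cong X_n$, while in the second case conditions 1) and 3) together give $\pureconfig_v(\cT)\cong X^*_{n,N}$ for $N$ the relative ramification of $v$. Vertices with no admissible children contribute a point.

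Finally, the dimension statement is immediate: each $X_n$ and $X^*_{n,N}$ has complex dimension $n=|\Ch_\IA(v)|$, so
\[
\dim \pureconfig(Q) \;=\; \sum_{v\in \IV^\flat}|\Ch_\IA(v)| \;=\; |\IA^\flat|,
\]
which is precisely the number of admissible vertices of $\cT$ of height at most $\Katz(Q)$. The only point requiring attention is the bookkeeping around the root: since $\eta=\lfloor \Katz(Q)+1\rfloor$, the truncation keeps exactly the admissible vertices of height $\le \Katz(Q)$, and the root itself is declared empty so it contributes no defect to the product decomposition.
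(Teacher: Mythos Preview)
Your proof is correct and follows the same approach as the paper's: invoke Theorem \ref{thm: configs from realzns} to identify $\pureconfig(Q)$ with the space of realisations of $\cT^\flat$, then observe that the conditions of Theorem \ref{thm: charn of realzns} decouple over sibling sets. The paper's own proof is a single sentence pointing to exactly these two theorems, so you have simply unpacked the details (the partition of $\IA^\flat$ by parent, the locality of the constraints, and the identification of each factor with $X_n$ or $X^*_{n,N}$) that the paper leaves implicit.
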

\pf
Since $\pureconfig(Q) \cong \pureconfig(\cT^\flat)$
this follows from the characterisation of the realisations 
$c\colon\IA^\flat\to \IC$ of $\cT^\flat$ given 
in Thm. \ref{thm: charn of realzns}.
\epf

In particular it follows that
the configuration spaces are {\em connected}, since each of the local configuration spaces 
$\pureconfig_v(\cT)\cong X_n$ or $X_{n,N}^*$ is connected.
This yields a combinatorial/topological 
characterisation of admissible deformations, 
as follows.

\begin{cor}\label{cor: trees and adm defmns of types }
Two compatible pointed irregular types are admissible deformations of each other if and only if they have isomorphic labelled fission trees, 
if and only if they have the same 
labelled fission data.
\end{cor}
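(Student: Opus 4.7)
The plan is to split the three-way equivalence into two parts. The equivalence $(b) \Longleftrightarrow (c)$ --- isomorphic labelled fission trees versus equal labelled fission data --- is immediate from Lemma \ref{lem: fission trees and data} read in the labelled setting, so nothing new is needed there. The substantive content is $(a) \Longleftrightarrow (c)$, that is, that for compatible \mits{} being admissible deformations of each other coincides with the numerical equivalence $\sim$, because Theorem \ref{thm:admissible_deformation_iff_same_fissiondata} already identifies $\sim$ with having equal labelled fission data.

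For the forward implication (admissible deformation $\Longrightarrow \sim$), I would read off slope invariance directly from Definition \ref{defn: hol adm defmn}. Given a holomorphic admissible family connecting $Q$ and $Q'$, viewed as a family of full irregular types via the ordering convention, condition \eqref{eq: adm condn} forces $\slope(\sigma^k(q_i(b)) - \sigma^l(q_j(b)))$ to be constant in $b$ for every $i,j,k,l$, since the Galois conjugates depend holomorphically on the coefficients. The observation recorded right after that definition guarantees that the Galois-orbit structure --- in particular the number $m$ of orbits, the ramifications $\ram(q_i)$, and the multiplicities $n_i$ --- is locally constant along the family, so nothing in the combinatorics of $\sim$ can change.

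For the converse ($\sim \Longrightarrow$ admissible deformation), the key ingredient is connectedness of the configuration space, which is where the structural results of the paper pay off. If $Q \sim Q'$, then by Theorem \ref{thm: configs from realzns} both correspond to points of the same configuration space $\pureconfig(Q) \cong \pureconfig(\cT^\flat)$. By Corollary \ref{cor: config prod decomp} this space decomposes as a finite product of factors of type $X_n$ or $X^*_{n,N}$, each a complex hyperplane complement in some $\IC^n$ and hence path-connected by a holomorphic arc. Picking such a path $\gamma \colon [0,1] \to \pureconfig(Q)$ from $Q$ to $Q'$ produces a holomorphic family $Q_{\gamma(t)}$ with $Q_{\gamma(t)} \sim Q$ for every $t$ by the very definition of $\pureconfig(Q)$, so the slopes $\slope(q_i(t) - q_j(t))$ of the associated full irregular types are literally constant along the path --- which is exactly the admissibility condition \eqref{eq: adm condn}.

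The main obstacle I anticipate is bookkeeping rather than conceptual: one must verify that a holomorphic path chosen inside $\pureconfig(Q)$ really does produce an admissible family of the original twisted \mits{}, and not merely of some cyclic pullback. This reduces to the compatibility between the identification of \mits{} with realisations of $\cT^\flat$ provided by Lemma \ref{lem: coeffrestrnsfromtree} and the definition of admissibility via pullback to the untwisted setting in Definition \ref{defn: hol adm defmn}, which follows once one tracks how slopes transform under the substitution $x = t^r$. Once this compatibility is recorded, combining the two implications with Theorem \ref{thm:admissible_deformation_iff_same_fissiondata} and Lemma \ref{lem: fission trees and data} closes the three-way equivalence.
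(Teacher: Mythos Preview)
Your proposal is correct and follows essentially the same route as the paper: the paper also invokes Lemma \ref{lem: fission trees and data} for the tree/data equivalence, uses connectedness of $\pureconfig(Q)$ (via the product decomposition of Corollary \ref{cor: config prod decomp}) to pass from same labelled fission tree to an admissible deformation over $\IB=\pureconfig(Q)$, and dismisses the forward implication as ``clear''. The only minor cleanup is that rather than parametrising by a real interval $[0,1]$ you should simply take $\IB=\pureconfig(Q)$ itself as the complex base (after assuming without loss of generality that $\Katz(Q)\ge\Katz(Q')$ so that $Q'$ actually lies in $\pureconfig(Q)$), since the definition of holomorphic admissible deformation requires a complex manifold base.
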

\pf
Given $Q,Q'$ suppose that $\Katz(Q)\ge \Katz(Q')$
and consider $\pureconfig(Q)$. 
If the labelled fission trees are isomorphic then 
$Q'=Q_c$ for some realisation $c$ of $\cT(Q)$, 
by Theorem \ref{thm: configs from realzns}. 
Thus, by connectedness, 
$Q,Q'$ are admissible deformations of each other
with $\IB=\pureconfig(Q)$.
The converse is clear.
The last statement follows as in Lem. \ref{lem: fission trees and data}.
\epf

\begin{cor}\label{cor: adm defm types condns}
Two rank $n$ 
irregular classes are admissible deformations of each other if and only if they have isomorphic fission trees, 
if and only if they have the same fission data.
\end{cor}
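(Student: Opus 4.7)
The plan is to reduce the statement for irregular classes to the already-established version for compatible pointed irregular types (Cor.~\ref{cor: trees and adm defmns of types }) by lifting, applying that result, and then descending. The equivalence between the two right-hand conditions (isomorphic fission trees $\Leftrightarrow$ equal fission data) is immediate from Lem.~\ref{lem: fission trees and data}, so the real content is the equivalence with admissible deformation.

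First I would choose, for any pair of rank $n$ irregular classes $\Theta,\Theta'$, compatible pointed irregular types $Q,Q'$ representing them (such a choice exists, as remarked after Defn.~\ref{defn: compatible pit}). Recall that the unlabelled fission tree $\cT(\Theta)$ is just the labelled fission tree $\wh\cT(Q)$ with its leaf-labelling forgotten, and this unlabelled tree depends only on $\Theta$ and not on the chosen ordering of its Galois orbits.

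For the forward implication, suppose $\Theta\simeq\Theta'$ admissibly. By Defn.~\ref{defn: hol adm defmn} the family of classes can locally be written as $\Theta_b=[Q_b]$ for a holomorphically varying full irregular type $Q_b$ with constant slopes $\slope(q_i-q_j)$. After a relabelling on one endpoint to match compatible orderings, we obtain a path of compatible pointed irregular types $Q_b$ joining $Q$ to a reordering $Q''$ of $Q'$ (both representing the same endpoint class). By Cor.~\ref{cor: trees and adm defmns of types }, $\wh\cT(Q)\cong\wh\cT(Q'')$ as labelled fission trees, hence a fortiori as unlabelled trees, and $\wh\cT(Q'')$ and $\wh\cT(Q')$ have the same underlying unlabelled tree since they come from orderings of the same class $\Theta'$. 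Thus $\cT(\Theta)\cong\cT(\Theta')$.

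For the converse, suppose $\cT(\Theta)\cong\cT(\Theta')$ via some tree isomorphism $\phi$. Since $\phi$ induces a multiplicity-preserving bijection between the leaves of $\wh\cT(Q)$ and those of $\wh\cT(Q')$, we may reorder the Galois orbits of $Q'$ to obtain a compatible pointed irregular type $Q''$, with $[Q'']=\Theta'$, such that $\wh\cT(Q)\cong\wh\cT(Q'')$ as \emph{labelled} fission trees. Cor.~\ref{cor: trees and adm defmns of types } then gives an admissible deformation of compatible pointed irregular types joining $Q$ to $Q''$; passing to irregular classes (i.e.\ forgetting the ordering of Galois orbits at every $b$) yields an admissible deformation from $\Theta$ to $\Theta'$, as required.

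The only potentially delicate step is the relabelling in the converse direction: one must check that reordering the Galois orbits of a compatible pointed irregular type preserves compatibility and the labelled tree structure in a way compatible with $\phi$. But this is straightforward because compatibility (Defn.~\ref{defn: compatible pit}) is a property of the set of distinguished representatives and is insensitive to the order in which they are listed, while $\phi$ supplies exactly the permutation needed to align the labellings.
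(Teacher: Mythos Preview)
Your proposal is correct and follows essentially the same approach as the paper: the paper's proof is a single sentence saying the result follows from Cor.~\ref{cor: trees and adm defmns of types } ``by considering local lifts from irregular classes to pointed irregular types,'' and you have simply unpacked what that lifting entails in both directions, including the reordering needed to align labellings. Your level of detail is greater than the paper's, but the strategy is identical.
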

\pf
This follows from Cor. \ref{cor: trees and adm defmns of types } by considering local lifts from irregular classes to pointed irregular types.
\epf

\begin{cor}\label{cor: config space as fine modspace }
Let $Q$ be a compatible \mit{}. Then the configuration space $\pureconfig(Q)$ is a fine moduli space of all \mits{} that are admissible deformations of $Q$, with Poincar\'e--Katz rank $\le \Katz(Q)$.
Similarly $\mathbf{SB}(Q)$ is a fine moduli space of all trace-free admissible deformations of any trace-free pointed irregular type $Q$.
\end{cor}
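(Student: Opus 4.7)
The plan is to exhibit the tautological family over $\pureconfig(Q)$ as the required universal object. Set $r = \Ram(Q)$, $K = \Katz(Q)$, $s = rK$, so that by Definition \ref{defn: config space} the space $\pureconfig(Q) \subset \C^{ms}$ is exactly the locus of coefficient tuples $\mathbf{a} = (a_{i,j})$ for which $Q_{\mathbf{a}} \sim Q$. I would define the tautological family $\mathcal{Q}^{\mathrm{univ}}$ on $\pureconfig(Q)$ by $\mathbf{a} \mapsto Q_{\mathbf{a}}$. By Theorem \ref{thm:admissible_deformation_iff_same_fissiondata}, each fibre is a compatible \mit{} with the same labelled fission data as $Q$, and consequently the rational numbers $\slope(q_i(\mathbf{a}) - q_j(\mathbf{a}))$ are constant in $\mathbf{a}$; thus $\mathcal{Q}^{\mathrm{univ}}$ is a holomorphic admissible deformation of $Q$ in the sense of Definition \ref{defn: hol adm defmn}, with Poincaré--Katz rank bounded by $K$.

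For universality, I would take an arbitrary holomorphic family $(Q_b)_{b \in \IB}$ of compatible admissible deformations of $Q$ with Poincaré--Katz rank $\le K$. The continuity observation made just after Definition \ref{defn: hol adm defmn} guarantees that the Galois orbits (and hence the ramification orders of the individual exponential factors) are locally constant along the family, while Corollary \ref{cor: trees and adm defmns of types } ensures that the total ramification remains $r$ throughout. Hence each $Q_b$ may be written canonically in the form \eqref{eq: naivecoeffs} in the variable $t = x^{1/r}$, yielding holomorphic functions $a_{i,j}(b)$ on $\IB$. The classifying map $\phi\colon \IB \to \C^{ms}$, $\phi(b) := (a_{i,j}(b))$, is then holomorphic; it factors through $\pureconfig(Q)$ because $Q_b \sim Q$ by hypothesis; and by construction $\phi^* \mathcal{Q}^{\mathrm{univ}} = (Q_b)$. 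Uniqueness is immediate, since the coefficients $a_{i,j}(b)$ are recovered from $Q_b$ via Puiseux expansion in $t$.

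The trace-free case runs in exact parallel. The tautological family on $\mathbf{SB}(Q) \subset \pureconfig(Q)$ consists of trace-free pointed irregular types by the definition \eqref{eq: SB}. Conversely, for any trace-free holomorphic admissible deformation $(Q_b)$ of $\pr(Q)$, the classifying map $\phi$ constructed above automatically lands in $\mathbf{SB}(Q)$, because $\Tr(Q_{\mathbf{a}})$ is a $\C$-linear function of $\mathbf{a}$ vanishing precisely on $\mathbf{SB}(Q)$.

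The only real (and rather mild) obstacle in this plan is the bookkeeping step of showing that a single choice of $r = \Ram(Q)$ accommodates all fibres of an arbitrary admissible family simultaneously; this is guaranteed by the invariance of the labelled fission tree—and in particular of the ramification indices on each full branch—under admissible deformation, as established in Corollary \ref{cor: trees and adm defmns of types }.
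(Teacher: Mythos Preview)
Your proposal is correct and follows essentially the same approach as the paper: exhibit the tautological family as universal and invoke the earlier classification results to verify that every admissible deformation factors through it uniquely. The only cosmetic difference is that the paper phrases the universal family via the tree-realisation description $\pureconfig(\cT^\flat)$ and the formulae \eqref{eq: exp factor from map c}, \eqref{eq: pit from map c}, whereas you work directly with the coordinate presentation $\mathbf{a}\mapsto Q_{\mathbf{a}}$ of Definition~\ref{defn: config space}; these are identified by Lemma~\ref{lem: coeffrestrnsfromtree} and Theorem~\ref{thm: configs from realzns}, so the two descriptions of the universal family coincide.
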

\pf
The main point is that the
product decomposition and the formulae 
\eqref{eq: exp factor from map c}, 
\eqref{eq: pit from map c}
give a universal family of pointed irregular types
over $\pureconfig(Q)=\pureconfig(\cT^\flat(Q))$.
\epf

\begin{eg}
Let us look at a few examples, starting with the ones with two exponential factors studied previously
\begin{itemize}
\item Consider $Q=[(1,\lambda x^{s/r}),(1,\mu x^{s/r})]$ with $r$ and $s$ coprime and $\lambda\neq \mu e^{2in\pi/r}$ for any integer $n$. The (labelled) fission tree $\mathcal{T}$ is the following:

\begin{center}
\begin{tikzpicture}
\tikzstyle{mandatory}=[circle,fill=black,minimum size=5pt,draw, inner sep=0pt]
\tikzstyle{authorised}=[circle,fill=white,minimum size=5pt,draw,inner sep=0pt]
\tikzstyle{empty}=[circle,fill=black,minimum size=0pt,inner sep=0pt]
\tikzstyle{root}=[fill=black,minimum size=5pt,draw,inner sep=0pt]
\tikzstyle{indeterminate}=[circle,densely dotted,fill=white,minimum size=5pt,draw, inner sep=0pt]
    \node[root] (R) at (1,5){};
    \node[mandatory] (A1) at (0,4){};
    \node[mandatory] (A2) at (2,4){};
    \node[authorised] (B1) at (0,3){};
    \node[authorised] (B2) at (2,3){};
    \node[authorised] (C1) at (0,0){};
    \node[authorised] (C2) at (2,0){};  
  \foreach \from/\to in {R/A1,R/A2,A1/B1,A2/B2}
    \draw (\from) -- (\to);
    \draw (B1)--(0,2.2);
    \draw (B2) -- (2,2.2);
    \draw[dashed] (0,1.8) -- (0,1.2);
    \draw[dashed] (2,1.8) -- (2,1.2);
    \draw (0,0.8) -- (C1);
    \draw (2,0.8) -- (C2);
    \draw (-1.3,4) node {$s/r$};
    \draw (-1.3,3) node {$(s-1)/r$};
    \draw (-1.3,0) node {$1/r$};
\end{tikzpicture}
\end{center}

From the tree we read the space of admissible deformations: we have 
\[
\pureconfig(Q)\cong X^*_{2,r}\times \C^{2(s-1)},\qquad
X^*_{2,r} = \{a_1,a_2\in \IC\st a_1a_2\neq 0,\, a_1^r\neq a_2^r\}.
\]
Indeed, the factor corresponding to the root vertex is $X^*_{2,r}$, the factors for the leaves are trivial, while for all other vertices $v$ the space $\pureconfig_v(\cT)$ 
is isomorphic to $\IC$. 
Similarly $\mathbf{SB}(Q)\cong X^*_{2,r}\times 
\C^{N}$ where $N=2s-2-\floor{s/r}$, 
removing one dimension for each integer below the root.
\item Let us consider $Q=[(1,q_1),(1,q_2)]$ with 
$q_1=x^{3/2}+x^{1/2}$, and $q_2=x^{3/2}+2x^{1/2}$. The fission tree is drawn below.
\begin{center}
\begin{tikzpicture}
\tikzstyle{mandatory}=[circle,fill=black,minimum size=5pt,draw, inner sep=0pt]
\tikzstyle{authorised}=[circle,fill=white,minimum size=5pt,draw,inner sep=0pt]
\tikzstyle{empty}=[circle,fill=black,minimum size=0pt,inner sep=0pt]
\tikzstyle{indeterminate}=[circle,densely dotted,fill=white,minimum size=5pt,draw, inner sep=0pt]
\tikzstyle{root}=[fill=black,minimum size=5pt,draw,inner sep=0pt]
    \node[root] (R) at (1,3){};
    \node[mandatory] (A1) at (1,2){};
    \node[authorised] (B1) at (1,1){};
    \node[authorised] (C1) at (0,0){};
    \node[authorised] (C2) at (2,0){};  
  \foreach \from/\to in {R/A1,A1/B1,B1/C1,B1/C2}
   \draw (\from) -- (\to);
    \draw (-1.3,2) node {$3/2$};
    \draw (-1.3,1) node {$1$};
    \draw (-1.3,0) node {$1/2$};
\end{tikzpicture}
\end{center}
From this we read that the space of admissible deformations satisfies
\[
\pureconfig(Q)\cong X^*_{1,2}\times X_1\times X_2\cong 
\C^*\times\C\times \{a,b\in \IC \;\vert\; a\neq b\}.
\]
\item Let us consider $Q=[(1,q_1),(1,q_2)]$ with 
$q_1=x^{3/2}+x^{1/3}$, and $q_2=x^{3/2}$. The fission tree is drawn below.
\begin{center}
\begin{tikzpicture}
\tikzstyle{mandatory}=[circle,fill=black,minimum size=5pt,draw, inner sep=0pt]
\tikzstyle{authorised}=[circle,fill=white,minimum size=5pt,draw,inner sep=0pt]
\tikzstyle{empty}=[circle,fill=black,minimum size=0pt,inner sep=0pt]
\tikzstyle{indeterminate}=[circle,densely dotted,fill=white,minimum size=5pt,draw, inner sep=0pt]
\tikzstyle{root}=[fill=black,minimum size=5pt,draw,inner sep=0pt]
    \node[root] (R) at (1,5){};
    \node[mandatory] (A1) at (1,4){};
    \node[authorised] (B1) at (1,3){};
    \node[authorised] (C1) at (1,2){};
    \node[mandatory] (D1) at (0,1){};  
    \node[empty] (D2) at (2,1){}; 
    \node[authorised] (E1) at (0,0){};  
    \node[empty] (E2) at (2,0){}; 
  \foreach \from/\to in {R/A1,A1/B1,B1/C1,C1/D1,C1/D2,D1/E1,D2/E2}
   \draw (\from) -- (\to);
   \draw (-1.3,4) node {$3/2$};
   \draw (-1.3,3) node {$1$};
    \draw (-1.3,2) node {$1/2$};
    \draw (-1.3,1) node {$1/3$};
    \draw (-1.3,0) node {$1/6$};
\end{tikzpicture}
\end{center}
\end{itemize}
This yields
\[
\pureconfig(Q)\cong X^*_{1,2}\times X^*_{1,3} \times X_1^3\cong (\C^*)^2\times \C^3.
\]

\end{eg}

\subsection{Topological skeleta}\label{ssn: skeleta}
Corollary \ref{cor: adm defm types condns} has the following immediate global consequence. 
Suppose $\mathbf{\Si}=(\Si,\ba,\mathbf{\Th})$ is a rank $n$ wild Riemann surface, with $\Si$ a compact Riemann surface, 
$\ba\subset \Si$ a finite subset, and 
$\mathbf{\Th}=\{\Th_a\st a\in \ba\}$ the data of a 
rank $n$ irregular class for each marked point. 
For each $a\in \ba$ let $\cT_a=\cT(\Th_a)$ be the fission tree of the irregular class $\Th_a$. 
Define the {\em topological skeleton} of 
$\mathbf{\Si}$ to be the pair
$$\text{Sk}(\mathbf{\Si}) = (g,\mathbf{F})$$
where $g\ge 0$ is the genus of $\Si$ and 
$\mathbf{F}=\sum_{a\in \ba} [\cT_a]$
is the {\em forest} of $\mathbf{\Si}$, i.e.
the multiset of isomorphism classes of 
fission trees determined by all the $\cT_a$,  
as $a$ ranges over the marked points $\ba\subset \Si$. 
In general it is a multiset rather than a set as some of the fission trees at distinct points may be isomorphic. 
As explained above the notion of admissible deformations of (twisted) wild Riemann surfaces follows from the untwisted case of \cite{gbs} (extending 
the generic case in \cite{JMU81, Mal-imd2long}).
In brief a holomorphic admissible deformation
of rank $n$ wild Riemann surfaces 
over a base space $\IB$ 
is a holomorphic family 
$\pi:\underline{\Si}\to \IB$
of compact Riemann surfaces, together with a holomorphic 
multisection $\si\subset \underline{\Si}$
restricting to a finite subset 
$\ba_b\subset \Si_b$ in each fibre 
$\Si_b=\pi^{-1}(b)\subset\underline{\Si}$, and also the choice of a rank $n$ irregular class
$\Th_a$ at each point $a\in \ba_b\subset \Si_b$
for each $b\in \IB$. 
These should be such that the irregular classes vary holomorphically, and the deformation is admissible: For the pointed surfaces the admissibility condition just means that each 
fibre $\Si_b$ is smooth, and none of the points $\ba_b$ coalesce (so we get the same number of points in each fibre). Finally we need to define what it means for the irregular classes to vary holomorphically and admissibly: these are local conditions so we can work over any small enough open subset $U$ of $\IB$ and focus on one marked point $a\in \ba_b$. We can then choose
a local coordinate $z$ vanishing at $a$ (for all $b\in U$). Thus we reduce to the situation in the definition of holomorphic admissible deformations of irregular classes 
given in Defn. \ref{defn: hol adm defmn} above.
In turn, two rank $n$ wild Riemann surfaces
will be said to be 
admissible deformations of each other
if they are related by the equivalence relation 
generated by the condition of being 
two fibres of a
holomorphic  admissible deformation (as defined above).

Corollary \ref{cor: adm defm types condns} then implies:

\begin{cor}
 Two rank $n$ wild Riemann surfaces are admissible deformations of each other if and only if they have the same topological skeleta.
\end{cor}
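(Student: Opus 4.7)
The plan is to prove the two directions separately, using Corollary \ref{cor: adm defm types condns} as the main local input and the connectedness of the relevant moduli/configuration spaces as the main global input.

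For the forward direction, it suffices (by transitivity of the equivalence relation) to show that the topological skeleton is locally constant along any single holomorphic admissible deformation $\pi\colon\underline{\Sigma}\to \IB$ of rank $n$ wild Riemann surfaces, hence constant since $\IB$ is connected. The genus of the fibres is locally constant in a smooth family of compact Riemann surfaces. The cardinality of $\ba_b$ is locally constant because admissibility prevents coalescence, so the multisection restricts to a finite unbranched cover of $\IB$. Finally, for each sheet of this cover, working in a local coordinate adapted to the moving marked point reduces the question to that of a holomorphic admissible deformation of irregular classes, so Corollary \ref{cor: adm defm types condns} gives that the fission tree (and hence the unordered multiset of fission trees, i.e.\ the forest $\mathbf{F}$) is locally constant on $\IB$.

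For the reverse direction, suppose $\mathbf{\Sigma}=(\Sigma,\ba,\mathbf{\Theta})$ and $\mathbf{\Sigma}'=(\Sigma',\ba',\mathbf{\Theta}')$ have the same skeleton $(g,\mathbf{F})$. Then $|\ba|=|\ba'|=m$ (the number of trees in $\mathbf{F}$), and after permuting the labelling of $\ba'$ we can fix a bijection $\ba\cong\ba'$ under which the fission trees of the corresponding irregular classes coincide. I would construct an admissible deformation connecting them in two stages. First, use the connectedness of the moduli space $\mathfrak{M}_{g,m}$ of smooth genus $g$ Riemann surfaces with $m$ ordered marked points to produce a holomorphic family of pointed curves over a connected base interpolating between $(\Sigma,\ba)$ and $(\Sigma',\ba')$; choose a smooth family of local coordinates near the moving multisection and transport the irregular classes rigidly in these coordinates. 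This yields an admissible deformation from $\mathbf{\Sigma}$ to an intermediate wild Riemann surface $\mathbf{\Sigma}''$ whose underlying pointed surface is $(\Sigma',\ba')$ and whose irregular classes have the same fission trees as those of $\mathbf{\Sigma}$, and hence (by the bijection above) as those of $\mathbf{\Sigma}'$.

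Second, with the underlying $(\Sigma',\ba')$ now fixed, deform the irregular classes one marked point at a time (equivalently, simultaneously and independently). At each $a\in\ba'$ Corollary \ref{cor: adm defm types condns} gives that the two irregular classes with the same fission tree are admissible deformations of each other, and the configuration spaces $\pureconfig(\Theta_a)$ are connected by the product decomposition in Corollary \ref{cor: config prod decomp} (each factor $X_n$ or $X_{n,N}^*$ is connected), so one can exhibit a holomorphic path of admissible deformations between them. Concatenating these deformations with the one in the first stage gives the required chain of admissible deformations from $\mathbf{\Sigma}$ to $\mathbf{\Sigma}'$. The only mild technical point---not really an obstacle---is checking coordinate invariance in the first stage, i.e.\ that the notion of holomorphic admissible deformation of irregular classes does not depend on the choice of local coordinate used to trivialise the family of pointed disks around the multisection; this follows from the intrinsic (coordinate-free) definition of the exponential local system and the fission tree.
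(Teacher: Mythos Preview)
Your proposal is correct and follows essentially the same strategy as the paper's proof: for the forward direction, invoke Corollary \ref{cor: adm defm types condns} to see that the fission trees (hence the forest) are constant along an admissible family, and for the reverse direction, first use the connectedness of the moduli space of pointed curves (the paper phrases this via Teichm\"uller space) to align the underlying pointed surfaces, then apply Corollary \ref{cor: adm defm types condns} locally at each marked point to match up the irregular classes. Your version is somewhat more detailed---in particular you spell out the ``rigid transport'' step and the role of the product decomposition in giving connectedness of $\pureconfig(\Theta_a)$---but the architecture is the same.
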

\pf
We may assume the genus and the number of marked points are equal, otherwise the result is clear.
Now if their topological skeleta are distinct then Cor. \ref{cor: adm defm types condns} 
implies there is no admissible deformation
between them.
Conversely if they have the same topological skeleta then we can use the universal family over  
Teichmuller space (or its version with marked points) to admissibly deform both 
wild Riemann surfaces so they have the same underlying Riemann surface with marked points, and moreover we may assume (since the topological skeleta are the same) that at each marked point
the two fission trees are isomorphic.
Finally we use the local statement 
(Cor. \ref{cor: adm defm types condns})
to deform the two irregular classes at each point in an admissible fashion, until they are equal.
\epf
In particular, since the set of possible topological skeleta is countable, this gives control over the set of possible topological types of the wild character varieties
$\mathcal{M}_\text{B}$: as in \cite{gbs} 
they form a local system of varieties over any admissible deformation, so up to isomorphism there is just one 
(Poisson) wild character variety for each possible topological skeleton (the key part of the proof in \cite{gbs} is local on the circle of directions so works equally well for twisted irregular classes).

\begin{remark}
The irregular Deligne--Simpson problem can then be 
stated as follows:
given a topological skeleton $(g,\mathbf{F})$, 
let $\mathbf{L}$ be the set of all the leaves of all the
trees in the forest $\mathbf{F}$. 
Choose a conjugacy class $\mathcal{C}_i\subset \GL_{n_i}(\IC)$ for each leaf $i\in \mathbf{L}$,
where $n_i\ge 1$ is the multiplicity of $i$.

{\bf Question:}\  for which choices of skeleton and conjugacy classes is there an irreducible algebraic connection $(V,\nabla)\to \Sigma^\circ=\Sigma\setminus \ba$
with the given topological skeleton and 
 formal monodromy conjugacy classes?
Passing to Stokes local systems \cite{tops}
(by the Stokes version of the 
irregular Riemann--Hilbert 
 correspondence),  
this can easily be rewritten as a linear algebra problem 
(as in \cite{gbs} \S9.4, 
and the graphical examples in \cite{cmqv} \S11).
\end{remark}

\section{Local wild mapping class groups}\label{sn: lpwmcgs}

\subsection{Pure local wild mapping class groups} 

Let $Q$ be a \mit{}. 
We define the \textit{pure} local (twisted) wild mapping class group of $Q$ as the fundamental group $\purelwmcg(Q) := \pi_1(\pureconfig(Q))$ of the configuration 
space of admissible deformations (with basepoint $\mathbf{a}_Q$). From the description of $\pureconfig(Q)$, it follows immediately that  $\purelwmcg(Q)$ also factorises as a product of factors associated to each vertex of the fission tree.
Note that the usual mapping class group may be defined in two ways, as the group of mapping classes or as the fundamental group of the moduli space/stack of Riemann surfaces; in the wild setting we only know the second type of definition, by generalising the notion of Riemann surface by incorporating non-trivial irregular classes.

\begin{thm}\label{thm: wmcg product}
Let $\mathcal{T}$ 
be the fission tree of $Q$
and let $\IV^\flat$ be the nodes of its truncation. 
We have
\[
\purelwmcg(Q)\cong \prod_{v\in \IV^\flat} \purelwmcg_v(\mathcal{T}),
\]
with $\purelwmcg_v(\mathcal{T}) := \pi_1(\pureconfig_v(\mathcal{T}))$.
\end{thm}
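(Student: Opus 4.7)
The plan is to simply take fundamental groups of both sides of the product decomposition of configuration spaces already established in Corollary~\ref{cor: config prod decomp}. That corollary gives a homeomorphism
\[
\pureconfig(Q)\cong\prod_{v\in \IV^\flat} \pureconfig_v(\cT),
\]
and since $\IV^\flat$ is finite and each factor $\pureconfig_v(\cT)$ is a reasonably nice space (in fact a smooth connected complex algebraic manifold, being either a point, some $X_n$, or some $X^*_{n,N}$, all of which are Zariski-open in an affine space, hence path-connected and locally path-connected), the standard fact that $\pi_1$ commutes with finite products of based spaces applies.

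More concretely, I would first fix a basepoint $\mathbf{a}_Q\in\pureconfig(Q)$, corresponding under the homeomorphism to the collection of basepoints $(\mathbf{a}_v)_{v\in\IV^\flat}$ with $\mathbf{a}_v\in\pureconfig_v(\cT)$, where $\mathbf{a}_v$ records the values of the coefficient map $c_Q\colon\IA\to\IC$ on the admissible children of $v$, as in Lemma~\ref{lem: coeffrestrnsfromtree}. Then I would invoke the natural isomorphism
\[
\pi_1\!\left(\prod_{v\in \IV^\flat}\pureconfig_v(\cT),\,(\mathbf{a}_v)_v\right)\cong \prod_{v\in \IV^\flat}\pi_1\bigl(\pureconfig_v(\cT),\mathbf{a}_v\bigr),
\]
induced by the projections to each factor. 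Combining this with Corollary~\ref{cor: config prod decomp} gives the desired isomorphism, with $\purelwmcg_v(\cT):=\pi_1(\pureconfig_v(\cT))$ being the pure braid group of the trivial space (i.e.\ trivial) when $v$ has no nonempty children, the pure braid group of the hyperplane complement $X_n$ when $v$ has $n$ inconsequential children, and the pure braid group of $X^*_{n,N}$ when $v$ has $n$ mandatory children and relative ramification $N$.

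There is no real obstacle: once the geometric product decomposition at the level of configuration spaces is in hand, the statement on fundamental groups is a formal consequence of the functoriality of $\pi_1$ on pointed spaces and its compatibility with finite products. The only minor thing to remark is that the basepoints chosen on each side must correspond under the homeomorphism, which is automatic from the explicit form of the decomposition in Theorem~\ref{thm: charn of realzns} and Lemma~\ref{lem: coeffrestrnsfromtree}, where a realisation $c$ is exactly a tuple of independent local realisations $c\vert_{\Ch_\IA(v)}$ at each vertex $v$.
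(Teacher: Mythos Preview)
Your proposal is correct and follows exactly the paper's approach: the paper states just before the theorem that the factorisation of $\purelwmcg(Q)$ ``follows immediately'' from the product decomposition of $\pureconfig(Q)$ in Corollary~\ref{cor: config prod decomp}, and then records the result without further argument. Your account simply spells out the standard fact that $\pi_1$ commutes with finite products of (path-connected, locally path-connected) based spaces, together with the identification of each factor as a point, some $X_n$, or some $X^*_{n,N}$.
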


Since $\pureconfig_v(\mathcal{T})$ is 
isomorphic to a hyperplane complement of 
the form $X_n$ or $X^*_{n,N}$, setting
\[
\purelwmcg_n := \pi_1(X_n), \qquad \purelwmcg^*_{n,N} := \pi_1(X^*_{n,N}),
\]
we get that $\purelwmcg(Q)$ is always a product of factors of the form $\purelwmcg_n$ or $\purelwmcg^*_{n,N}$. Notice that $\purelwmcg_n$ is none other but the pure braid group on $n$ strands $PB_n$. We recover in this way the untwisted case of \cite{doucot2022local}, since it corresponds to the case where all vertices are \authorised{}, hence only the factors $\purelwmcg_n$ will appear in the factorisation. In comparison with the untwisted case, new factors $\purelwmcg^*_{n,N}$ appear as factors of the local wild mapping class group.

Let us now look at a few examples. A simple case to look at is when there is only one exponential factor.

\begin{prop} Let $q$ be an exponential factor. 
Then $\purelwmcg(q) := \pi_1(\pureconfig(q))$  
is isomorphic to $\mathbb{Z}^{\abs{\Levels(q)}}$.
\end{prop}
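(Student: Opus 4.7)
The plan is to obtain this as an immediate consequence of the earlier structural results, in two essentially equivalent ways.

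The quickest route is via Proposition \ref{prop: singcirc conf}. Since $Q=[(1,q)]$ is a \mit{} with a single Galois orbit, that proposition already gives an explicit homeomorphism
\[
\pureconfig(q) \cong (\IC^*)^m \times \IC^N,
\]
where $m=\abs{\Levels(q)}$ and $N$ counts the inconsequential exponents below the Poincar\'e--Katz rank. The fundamental group of a product is the product of fundamental groups, and $\pi_1(\IC^*)=\IZ$ while $\pi_1(\IC)=0$, so $\pi_1(\pureconfig(q))\cong \IZ^m$, as desired.

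Alternatively, and perhaps more enlighteningly, I would derive the result from the product decomposition in Theorem \ref{thm: wmcg product}. The fission tree $\cT$ of a \mit{} with a single Galois orbit is a single full branch, so every non-leaf vertex $v\in \IV^\flat$ has exactly one non-empty child. Thus each local configuration space $\pureconfig_v(\cT)$ is either a point (when $v$ has an empty child, or is a leaf), or a single copy of $X_1\cong \IC$ (when the unique non-empty child is \authorised{}), or a single copy of $X^*_{1,N}\cong \IC^*$ (when it is mandatory). Consequently
\[
\purelwmcg(q) \cong \prod_{v\in \IV^\flat} \pi_1(\pureconfig_v(\cT)),
\]
and only the mandatory vertices contribute a nontrivial factor $\IZ$. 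Since the mandatory vertices on the single full branch are precisely the elements of $\IL_1$, in bijection with $\Levels(q)$ via the height map, we obtain $\purelwmcg(q)\cong \IZ^{\abs{\Levels(q)}}$.

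There is no serious obstacle here: both arguments reduce to verifying that the only nontrivial loops come from encircling the internal levels (i.e.\ from the mandatory coefficients, which must remain nonzero), and this is transparent from the description of the configuration space as a product of $\IC$'s and $\IC^*$'s. The main point worth emphasising in the write-up is the identification of the $m$ generating loops: they can be taken to be the small loops in the $\IC^*$-factors indexed by the levels $k_i\in L(q)$, or equivalently, the loops $t\mapsto e^{2\pi i t}\cdot c_Q(v)$ at the mandatory vertices $v\in \IL$.
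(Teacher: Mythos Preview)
Your first route is exactly the paper's proof: the paper simply says ``This follows immediately from Prop.~\ref{prop: singcirc conf}'', invoking the homeomorphism $\pureconfig(q)\cong(\IC^*)^m\times\IC^N$ and reading off $\pi_1$. Your second route via Theorem~\ref{thm: wmcg product} is a correct and mildly more informative rederivation of the same fact (it makes explicit which vertices contribute the $\IC^*$ factors), but it is not a genuinely different argument, since for a single branch the product decomposition of Corollary~\ref{cor: config prod decomp} collapses to precisely the $(\IC^*)^m\times\IC^N$ description of Proposition~\ref{prop: singcirc conf}.
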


\begin{proof}
This follows immediately from Prop. \ref{prop: singcirc conf}. 
\end{proof}

\begin{eg}
Let us look once again at our previous examples:
\begin{itemize}
\item Consider $Q=[(1,\lambda x^{s/r}),(1,\mu x^{s/r})]$ with $r$ and $s$ coprime and $\lambda\neq \mu e^{2in\pi/r}$ for any integer $n$.
The space of realisations is homotopy equivalent to 
\[
X^*_{2,r}=\{a,b\in \C^* \;\vert\; \forall k\in \Z, a\neq b e^{2ik\pi/r}\}
\]
The fundamental group is thus $\purelwmcg(Q)\cong\purelwmcg^*_{2,r}$.

\item Let us consider $Q=[(1,q_1),(1,q_2)]$ with $q_1=x^{3/2}+x^{1/2}$, and $q_2=x^{3/2}+2x^{1/2}$. The space of realisations is homotopy equivalent to
\[
X^*_{1,2}\times X_2=\C^* \times \{(a,b)\in \C^2 \;\vert \; a\neq b\}.
\]
Its fundamental group thus satisfies $\purelwmcg(Q)\cong\purelwmcg^*_{1,2}\times \purelwmcg_{2}\cong \Z^2$.
\item Let us consider $Q=[(1,q_1),(1,q_2)]$ with $q_1=x^{3/2}+x^{1/3}$, and $q_2=x^{3/2}$. The space of admissible deformations is homotopy equivalent to $X^*_{1,2}\times X^*_{1,3}\cong (\C^*)^2$, so its fundamental group is isomorphic to $\Z^2$.
\end{itemize}
\end{eg}

\begin{remark}\label{rmk: bmr etc}
It turns out that the new building blocks $\purelwmcg^*_{n,N}$ which appear in the twisted case coincide with some braid groups studied in the literature on complex reflections, in particular by Broué--Malle--Rouquier 
\cite{BMR1998complex}.
More precisely, the group $\purelwmcg^*_{n,N}$ is the same as the group denoted $P(N,1,n)$ there, and the hyperplane complement $X^*_{n,N}$ is equal to the one denoted by $\mathcal{M}^{\#}(N,n)$ there (introduced in their Lemma 3.3). 
Our study of the local wild mapping class groups thus gives a modular interpretation (in 2d gauge theory) for this class of 
complex braid groups 
(coming from hyperplane arrangements of the reflecting hyperplanes of these complex/unitary reflection groups). 
We will see below that the corresponding 
complex reflection groups, 
the generalised symmetric groups 
$S(n,N)=G(N,1,n)$, appear also in our setting, 
when passing from irregular types to irregular classes.
\end{remark}

\subsection{\Nonpure{} local wild mapping class groups}

Given an irregular class $\Th$ we have defined a 
fission tree $\cT=\cT(\Th)$
and this determines
a configuration space
$\pureconfig(Q)\cong \pureconfig(\cT)\subset 
\Map(\IA^\flat,\IC)$
where $\IA^\flat$ is the finite 
set of admissible nodes of the truncated fission tree.
Now we will define a finite group $W(\cT)$ 
(the {\em Weyl group} of the tree) 
and a free action of $W(\cT)$ on $\pureconfig(\cT)$ 
so that two points $Q_1,Q_2\in \pureconfig(\cT)$ 
are in the same orbit if and only if $[Q_1]=[Q_2]$, i.e. if 
they determine the same irregular class.
This leads to the {\em full} local wild mapping class group. 

For certain simple examples of fission trees $\cT$ we will then find
$$W(\cT)\cong \Sym_n\ltimes (\IZ/N\IZ)^n$$
i.e. the Weyl group is a so-called generalised symmetric group 
$S(N,n)$, isomorphic to 
the complex reflection group denoted $G(N,1,n)$  in the Shephard--Todd classification \cite{Shep-Todd} (they are the symmetry groups of the 
regular complex polytopes
called the generalised cubes  $\gamma_n^N$ and the 
generalised octahedra $\beta_n^N$, see 
e.g. \S13.4, p.147 of Coxeter's book 
\cite{coxeter-regcxpolytopes} on regular complex polytopes).

\subsubsection{The Weyl group of a fission tree}

Let $\cT$ be a fission tree, let  $p=\abs{\IV_0}$ be the number of leaves of 
$\cT$, 
and choose a labelling $\psi\colon\{1,\ldots,p\}\cong \IV_0$.
Let $v_i=\psi(i)$ be the $i$th leaf, and let 
$v_{ij}\in \cT$ be the branchpoint where the full branches 
$\cB_i,\cB_j$ meet, i.e. the nearest common ancestor of $v_i,v_j$. 
Let $r_i=\Ram(v_i)\in \IN$ be the ramification index of the $i$th leaf, and let $r_{ij}=\Ram(v_{ij})$, for all $i,j=1,2,\ldots,p$, so that 
$r_{ij}$ divides both $r_i$ and $r_j$.

The group $\Aut(\cT)$  of automorphisms of $\cT$ 
embeds in the symmetric group 
$\Sym_p=\Aut(\IV_0)$ since any automorphism of the tree 
is determined by its action on the leaves. 
Thus $\Aut(\cT)$ acts on 
the product $(\IZ/r_1\IZ)\times\cdots\times(\IZ/r_p\IZ)$ 
of cyclic groups, permuting the factors 
(since if two full branches are isomorphic then they have the same ramification index $r_i$).
Thus we can consider the
 semi-direct product
\beq\label{eq: sdp for W}
\Aut(\cT)\ltimes \bigl((\IZ/r_1\IZ)\times\cdots\times  (\IZ/r_p\IZ)\bigr)
\eeq
defined via this action.
The Weyl group of $\cT$ is the following subgroup of this semi-direct product.

\begin{defn}
The Weyl group of the fission tree $\mathcal{T}$ is the subgroup of 
\eqref{eq: sdp for W} 
defined by
\[
W(\mathcal{T}) := \left\{ (\pi,(d_1,\dots,d_p))\in \Aut(\mathcal{T})\ltimes (\Z/r_1\Z\times \cdots\times \Z/r_p\Z) \st
d_i \equiv d_j \text{ mod } r_{ij}\right\}.
\]
\end{defn}

Note that since $r_{ij}\!\st\! r_i$ there is a 
quotient map 
$\pr_i\colon\IZ/r_i\IZ\onto \IZ/r_{ij}\IZ$ and the statement that 
$d_i \equiv d_j \text{ mod } r_{ij}$ just means that 
$\pr_i(d_i)=\pr_j(d_j)$.
If $p=1$ then $W(\cT)=\IZ/r_1\IZ$.

In the rest of this section we will prove the following.

\begin{thm}\label{thm: full lwmcg}
Let $Q$ be a compatible pointed irregular type with fission tree $\cT$.
The Weyl group $W(\cT)$ acts freely on the configuration space 
$\pureconfig(Q)=\pureconfig(\cT)$ and the quotient 
$$\fullconfig(\Th)=\fullconfig(\cT) := \pureconfig(Q)/W$$
is the space of all irregular classes that are admissible deformations of 
$\Th := [Q]$ with bounded Poincar\'e--Katz rank.
\end{thm}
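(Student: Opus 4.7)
The plan is to construct an explicit free action of $W(\cT)$ on $\pureconfig(\cT)$ whose orbits are the admissible deformations of the irregular class $\Th=[Q]$ of bounded Katz rank. A compatible \mit{} differs from its underlying irregular class by two pieces of choice: an ordering of the Galois orbits (a labelling of the leaves of $\cT$, modulo tree automorphism) and a choice of distinguished representative in each Galois orbit (an element of $\IZ/r_i\IZ$ per leaf). These are precisely the ingredients of the ambient semi-direct product $\Aut(\cT)\ltimes \prod_i \IZ/r_i\IZ$ in which $W(\cT)$ is carved out by the congruences $d_i\equiv d_j\pmod{r_{ij}}$.

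First I would define the action: an element $(\pi,\mathbf{d})=(\pi,(d_1,\dots,d_p)) \in W(\cT)$ acts on a \mit{} $Q_c=[(n_1,q_1),\dots,(n_p,q_p)]$ by independently applying the Galois shift $\si^{d_i}$ to the $i$-th distinguished representative $q_i$, and then relabelling the orbits by $\pi \in \Aut(\cT)$. The critical point is that this produces a well-defined element of $\pureconfig(\cT)$, i.e.\ a compatible \mit{} realising the same fission tree. The fission tree is preserved because Galois shifts leave Stokes circles invariant and $\pi$ is by definition a tree automorphism; the delicate part is compatibility in the sense of Defn.~\ref{defn: compatible pit}. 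The common part of $q_i$ and $q_j$ sits on the trunk up to the branchpoint $v_{ij}$ and has ramification $r_{ij}$, so the Galois action on it factors through $\IZ/r_{ij}\IZ$. The shifted factors $\si^{d_i}(q_i)$ and $\si^{d_j}(q_j)$ therefore have equal (not merely Galois-conjugate) common parts if and only if $d_i\equiv d_j\pmod{r_{ij}}$, which is exactly the defining condition of $W(\cT)$.

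Next I would verify freeness. If $(\pi,\mathbf{d})$ fixes $Q_c$, then pairwise distinctness of the Galois orbits of $q_1,\dots,q_p$ (Defn.~\ref{defn: pit}) forces $\pi=\id$. The remaining identities $\si^{d_i}(q_i)=q_i$, combined with $\ram(q_i)=r_i$, then force $d_i\equiv 0\pmod{r_i}$ for each $i$, so $(\pi,\mathbf{d})$ is the identity in $W(\cT)$.

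Finally I would identify the quotient. The map $c\mapsto [Q_c]$ from $\pureconfig(\cT)$ to the set of admissible deformations of $\Th$ of Katz rank $\le \Katz(Q)$ is surjective by Cor.~\ref{cor: adm defm types condns}: any such class has fission tree isomorphic to $\cT$ and hence admits a compatible pointed lift, which by Lem.~\ref{lem: coeffrestrnsfromtree} corresponds to a realisation of $\cT$. Conversely, two realisations $c_1,c_2$ with $[Q_{c_1}]=[Q_{c_2}]$ define the same multiset of Galois orbits, hence differ by a bijection of orbits lifting to an element $\pi\in\Aut(\cT)$ together with independent Galois shifts of the distinguished representatives; consistency at each shared ancestor vertex $v_{ij}$ then puts $(\pi,\mathbf{d})$ in $W(\cT)$. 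The main obstacle will be this last bookkeeping—organising simultaneous Galois shifts across all pairs of orbits coherently with the branching structure of $\cT$—but because compatibility is imposed pairwise on \mits{}, the check reduces to the two-branch situation already handled by Prop.~\ref{prop: deformation_two_active_circles}.
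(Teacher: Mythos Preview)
Your proposal is correct and follows essentially the same route as the paper's proof: the paper likewise defines the action of the ambient semi-direct product $G=\Aut(\cT)\ltimes\prod_i\IZ/r_i\IZ$ on \mits{} by shift-then-permute, isolates $W(\cT)\subset G$ as precisely the subgroup preserving compatibility via the common-part computation $\si^{d_i}(q_c)=\si^{d_j}(q_c)\Leftrightarrow d_i\equiv d_j\pmod{r_{ij}}$ (packaged there as Lemma~\ref{lem: charn of weyl group}), and then argues freeness and orbit identification exactly as you do. The only cosmetic difference is that the paper phrases the orbit identification as ``all \mits{} with class $\Th$ are related by $G$, and among these only those in $W(\cT)$ stay compatible'', whereas you run the same argument directly on a pair of realisations; the content is identical.
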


It follows that $\fullconfig(\Th)$ is a manifold and we can define the {\em full} local wild mapping class group to be 
\beq
\fulllwmcg(\Th) = \pi_1(\fullconfig(\Th)).
\eeq

\subsubsection{Action on pointed irregular types}

The semi-direct product \eqref{eq: sdp for W} 
is easy to understand via its action on pointed irregular types. 
Let $$Q=[(n_1,q_1),\ldots,(n_p,q_p)]$$ 
be a pointed irregular type with fission 
tree $\cT$ as above.
Let $G$ denote the corresponding group \eqref{eq: sdp for W}
defined as a semi-direct product.
If $g=(\pi,\bd)\in G$ with $\bd=(d_1,\ldots,d_p)$
then we can obtain another  
pointed irregular type $g\cdot Q$ with fission tree $\cT$ by the formula:
\begin{align*}
g\cdot Q &= (\pi,0)\cdot 
[(n_1,\si^{d_1}(q_1)),\ldots,(n_p,\si^{d_p}(q_p))]\\
&=[
(n_{\pi(1)},\si^{d_{\pi(1)}}(q_{\pi(1)})),\ldots,
(n_{\pi(p)},\si^{d_{\pi(p)}}(q_{\pi(p)}))
]
\end{align*}
so that the cyclic groups rotate the choices of ``pointing'' and $\pi$ permutes the exponential factors which have isomorphic full branches.

Note that  $g\cdot Q$ will always have the same fission tree as $Q$ but it may not be an admissible deformation of $Q$, i.e. it may not be a point of the configuration space 
$\pureconfig(Q)$.
The Weyl group $W(\cT)$ is the subgroup characterised by this property:

\begin{lem}\label{lem: charn of weyl group}
Suppose $Q$ is a compatible pointed irregular type, 
and $g\in G$ is an element of the semi-direct product
\eqref{eq: sdp for W}.
Then $g\cdot Q$ is an admissible deformation of $Q$ 
(i.e. $g\cdot Q\in\pureconfig(Q)$) 
if and only if $g\in W(\cT)$. 
\end{lem}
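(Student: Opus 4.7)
The plan is to reduce this to a statement about Galois orbit compatibility at branching points of the fission tree, and then recognise the resulting condition as the defining congruence of $W(\cT)$. First, I would observe that $g\cdot Q$ always has the same underlying (unlabelled) fission tree as $Q$: the slopes of $\si^{d_i}(q_i)-\si^{d_j}(q_j)$ depend only on the Galois orbits $\<q_i\>, \<q_j\>$ (and hence on $\cT$), and permuting by $\pi \in \Aut(\cT)$ also preserves the tree up to isomorphism. Thus by Corollary \ref{cor: trees and adm defmns of types }, membership of $g\cdot Q$ in $\pureconfig(Q)$ reduces to checking the two conditions: (a)~the labelled fission tree of $g\cdot Q$ is isomorphic to that of $Q$, and (b)~$g\cdot Q$ is a compatible \mit{}. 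Since $\pi \in \Aut(\cT)$ already acts as a label-preserving automorphism of $\cT$, condition (a) is automatic for every $g \in G$. So the whole content of the lemma is in (b).

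Next I would unpack compatibility of $g\cdot Q$. Writing $i'=\pi(i), j'=\pi(j)$, compatibility requires that whenever $\<\tau_k(\si^{d_{i'}}(q_{i'}))\> = \<\tau_k(\si^{d_{j'}}(q_{j'}))\>$ we have $\tau_k(\si^{d_{i'}}(q_{i'})) = \tau_k(\si^{d_{j'}}(q_{j'}))$. Since the Galois action permutes orbits, this orbit equality is the same as $\<\tau_k(q_{i'})\>=\<\tau_k(q_{j'})\>$, and by compatibility of $Q$ this forces $\tau_k(q_{i'}) = \tau_k(q_{j'})$. Using the fact that truncation intertwines the Galois action (it factors through $\si^{d \bmod r'(k)}$, where $r'(k)=\Ram(\tau_k(q_{i'}))$), the compatibility condition on $g\cdot Q$ becomes
\[
\si^{d_{i'}}(\tau_k(q_{i'})) = \si^{d_{j'}}(\tau_k(q_{i'})),
\]
for all such $k$. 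Because Galois acts freely on each orbit, this is equivalent to $d_{i'} \equiv d_{j'} \bmod r'(k)$.

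The key step is then to identify the sharpest such congruence. As $k$ decreases through the common part of $q_{i'}$ and $q_{j'}$, the ramification $r'(k)$ can only grow, and it attains its maximum, equal to $r_{i'j'} = \Ram(v_{i'j'})$, exactly as $k$ approaches the branchpoint $v_{i'j'}$ from above. All higher values of $k$ give weaker conditions because $r'(k) \,\vert\, r_{i'j'}$. Hence compatibility of $g\cdot Q$ is equivalent to the single family of congruences $d_{i'} \equiv d_{j'} \bmod r_{i'j'}$ for all $i',j'$, which is precisely the condition cutting out $W(\cT)\subset \Aut(\cT)\ltimes\prod_i \Z/r_i\Z$. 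The main obstacle, and the one place where the compatibility hypothesis on $Q$ is used decisively, is in unravelling that the orbit-level condition $\<\tau_k(q_{i'})\>=\<\tau_k(q_{j'})\>$ already forces pointwise equality of truncations, allowing the free Galois action argument to produce a clean numerical congruence rather than a weaker orbit-theoretic one.
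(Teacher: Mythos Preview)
Your proposal is correct and follows essentially the same route as the paper: both reduce membership in $\pureconfig(Q)$ to the single condition that $g\cdot Q$ remain compatible, and both then unwind compatibility at each pair $i,j$ to the congruence $d_i\equiv d_j \bmod r_{ij}$ via the free Galois action on the common part $q_c=\tau_k(q_i)=\tau_k(q_j)$. The only cosmetic difference is that you invoke Corollary~\ref{cor: trees and adm defmns of types } and then argue separately that the labelled-tree condition is automatic (since $\pi\in\Aut(\cT)$ furnishes the required isomorphism), whereas the paper packages this step into the appeal to Lemma~\ref{lem: coeffrestrnsfromtree}; and you scan over all $k$ before identifying the sharpest congruence at the branchpoint, whereas the paper goes directly to $k=g_{ij}$.
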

\pf
This amounts to characterising the $g\in G$ such that
$g\cdot Q$ is still compatible since 1)
Any admissible deformation of $Q$ will still be compatible, and 2) by Lemma \ref{lem: coeffrestrnsfromtree}, any compatible $g\cdot Q$ will be in 
$\pureconfig(Q)$. 
Now to see if $g\cdot Q$ is still compatible, 
we need the exponential factors to ``branch'' like the circles they determine (i.e. their Galois orbits), 
and this comes down to requiring  
$$\tau_k(\si^{d_i}(q_i))=\tau_k(\si^{d_j}(q_j))$$
where $k=g_{ij}$ is the height of the nearest common
ancestor of the leaves $i,j$ in $\cT$, 
for all indices $i\neq j$.
But this just says that 
$\si^{d_i}(q_c) = \si^{d_j}(q_c)$ where
$q_c=\tau_k(q_i)=\tau_k(q_j)$ is the common part of 
$q_i,q_j$.
Now since $r_{ij}$ is the ramification order of $q_c$
this just means that $d_i\equiv d_j$ modulo $r_{ij}$, as in the definition of $W(\cT)$.
\epf

This implies that the finite group $W(\cT)$ acts on 
the configuration space $\pureconfig(Q)\cong \pureconfig(\cT)$, and we can now prove the rest of the theorem.

\ 

\pfms (of Thm \ref{thm: full lwmcg}).
It remains to show the action is free, and the 
orbits in $\pureconfig(Q)$ 
are the subsets with the same irregular class. 
The action is free since 1) the circles corresponding to isomorphic full branches are indeed distinct circles (as else they would be recorded in the multiplicity of the
leaf of the full branch), so the permutation is trivial, and 2) the $r_i$ are indeed the exact sizes of the Galois orbits of the $q_i$, so no smaller cyclic shift will act trivially.
Finally note that the pointed irregular types with given irregular class are just related by a choice of ordering of the circles, and the pointings of each circle, 
are related by the group $G$. 
Thus Lemma \ref{lem: charn of weyl group}
implies the $W$ orbits in 
$\pureconfig(Q)\cong \pureconfig(\cT)$
are exactly the points with the same irregular class.
\epfms

\begin{remark}
The Weyl group $W(\cT)$ is thus a
subgroup of the symmetric group of all 
permutations of the exponential factors in the corresponding full irregular type (the leaves of the corresponding full/naive fission tree, as in the proof of Prop. \ref{prop: one circle classn}, closely related to the 
``$3d$ fission tree'' in \cite{tops}).
\end{remark}

\begin{eg}\label{eq: getgensymgp}
Suppose $Q=[(1,q_1),\ldots,(1,q_p)]$ 
with $q_i=a_ix^{s/N}$, $i=1,\ldots,p$ 
where $(s,N)=1, N>1$ and the $a_i$ are generic complex numbers (in the sense that $a_i\neq 0$ and  $a_i^N\neq a_j^N$ if $i\neq j$).
Then the top part of  $\cT^\flat(Q)$ looks as follows, with $p$ mandatory nodes branching from the root: 

\begin{center}
\begin{tikzpicture}
\tikzstyle{dotted}= [dash pattern=on \pgflinewidth off 5pt]
\tikzstyle{mandatory}=[circle,fill=black,minimum size=5pt,draw, inner sep=0pt]
\tikzstyle{authorised}=[circle,fill=white,minimum size=5pt,draw,inner sep=0pt]
\tikzstyle{empty}=[circle,fill=black,minimum size=0pt,inner sep=0pt]
\tikzstyle{root}=[fill=black,minimum size=5pt,draw,inner sep=0pt]
\tikzstyle{ellps}=[circle,fill=black,minimum size=2pt,draw,inner sep=0pt]
\tikzstyle{indeterminate}=[circle,densely dotted,fill=white,minimum size=5pt,draw, inner sep=0pt]
    \node[root] (R) at (1,5){};
    \node[mandatory] (A1) at (-0.8,4){};
    \node[mandatory] (A2) at (0.1,4){};
    \node[mandatory] (A3) at (1.9,4){};
    \node[mandatory] (A4) at (2.8,4){}; 
\node[ellps] (A5) at (0.7,4){}; 
\node[ellps] (A6) at (1,4){}; 
\node[ellps] (A7) at (1.3,4){}; 
  \foreach \from/\to in {R/A1,R/A2,R/A3,R/A4}
    \draw (\from) -- (\to);
\end{tikzpicture}
\end{center}
Thus $\Aut(\cT)=\Sym_p$, and  in turn
$W(\cT)=\Sym_p\ltimes (\IZ/N\IZ)^p$ is the generalised symmetric group $S(N,p)\cong G(N,1,p)$, since the 
ramification indices $r_{ij}$ are all equal to $1$. 
For example any {\em symmetric irregular class}  
$I(a\col b)  :=  \sum_{i=1}^m\<\eps^i x^{a/b}\>$ falls into this setting 
($p=m, N=b/m$).
Here $a,b$  are positive integers with highest common factor $m$, 
and $\eps=\exp(2\pi i/b)$.
These are the classes obtained by pulling back 
a Stokes circle of the form $\<w^{1/b}\>$, 
under the cyclic covering $w=x^a$, and occur for the Molins--Turrittin differential equation $y^{(n)}=x^{\nu}y$, which has the same exponential factors as the irregular class $I(n+\nu \col n)$ (up to an overall scale factor of $n/(n+\nu)$) \cite{molins1876, turrittin1950}. 
\end{eg}

As in the untwisted case \cite{doucotrembado2023topology}, there is an explicit recursive description of the automorphism groups of the fission trees, and in turn the Weyl group.  
Define a ``maximal subtree'' of a fission tree $\cT$ to be one of the trees obtained by removing the highest branch node of 
$\cT$ (and all the higher edges), so the root of the subtree was the highest branch vertex of $\cT$.
The function $\Ram$ on $\cT$ (from Rmk. \ref{rmk/dfn: relram}) restricts to define a function on any such subtree, so its Weyl group is well-defined.

%


\begin{thm} Let $\mathcal{T}$ be a fission tree.

If $\cT$ consists only of one full branch whose leaf has ramification order $r$ then 
$\mathrm{Aut}(\mathcal{T})$ is trivial and 
$W(\cT)$ is isomorphic to $\Z/r\Z$.

Otherwise, let 
$\wb{\mathcal{T}}_1,\dots, \wb{\mathcal{T}}_s$ be the distinct isomorphism classes of decorated trees among its maximal subtrees, 
and for $i=1,\dots, s$ let $n_i \in \N$ denote the number of such maximal subtrees having the isomorphism class 
$\wb{\mathcal{T}}_i$.   
Then $\Aut(\cT)$ is a product of wreath products:
\[
\mathrm{Aut}(\mathcal{T})\cong\prod_{i=1}^s \Sym_{n_i}\wr \mathrm{Aut}(\wb{\mathcal{T}}_i).
\]
\item
In turn if $r$ is the ramification order of the root
of all the subtrees $\wb{\mathcal{T}}_i$ then
\[
W(\mathcal{T})\cong\left\{(\pi_i,(g_{i,1},\dots, g_{i,n_i}))\in\prod_{i=1}^s \Sym_{n_i}\wr 
W(\wb{\mathcal{T}}_i)
\, \Bigl\vert\, \begin{matrix}\delta(g_{i,k})\equiv \delta(g_{j,l}) \;\mathrm{mod}\; r \\  \forall i\neq j, \forall k,l\; \end{matrix} \right\},
\]
where $\delta(g_{i,k})$ denotes the shift at the root of the subtree induced by the automorphism $g_{i,k}$.
\end{thm}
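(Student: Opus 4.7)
I would proceed by induction on the number of branch vertices of $\mathcal{T}$. The base case, when $\mathcal{T}$ has no branch vertex, reduces to a single full branch with one leaf of ramification $r$: then $\Aut(\mathcal{T}) \hookrightarrow \Sym_1$ is trivial, and since $p=1$ the compatibility conditions defining $W(\mathcal{T})$ are vacuous, giving $W(\mathcal{T}) = \IZ/r\IZ$.

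For the inductive step on $\Aut(\mathcal{T})$, the first key observation is that any automorphism of $\mathcal{T}$ preserves the metric structure and hence the height map $h$: it therefore fixes the trunk of $\mathcal{T}$ pointwise, fixes the top branch vertex, and induces a permutation of its children. By preservation of all decorations (mandatory/inconsequential/empty status, leaf multiplicities), only children lying in isomorphic maximal subtrees can be exchanged; conversely, any choice of a permutation $\pi_i \in \Sym_{n_i}$ within each isomorphism class together with independent automorphisms of each maximal subtree assembles into an automorphism of $\mathcal{T}$. This yields the claimed wreath product decomposition $\Aut(\mathcal{T}) \cong \prod_i \Sym_{n_i} \wr \Aut(\wb{\mathcal{T}}_i)$.

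For $W(\mathcal{T})$, I would unwind the defining compatibility conditions $d_i \equiv d_j \bmod r_{ij}$ by splitting pairs of leaves into ``same subtree'' and ``different subtree'' cases relative to the top branching. When $i,j$ lie in the same maximal subtree $\wb{\mathcal{T}}_a$, the nearest common ancestor $v_{ij}$ stays inside that subtree, so $r_{ij}$ is the ramification computed there, and the inductive hypothesis identifies these conditions collectively as saying that the restriction of $(d_\bullet)$ to that subtree is an element of $W(\wb{\mathcal{T}}_a)$. When $i,j$ belong to different maximal subtrees, $v_{ij}$ is the top branch vertex and $r_{ij}$ equals the ramification $r$ at the root of each subtree, so the extra constraint becomes precisely $\delta(g_{i,k}) \equiv \delta(g_{j,l}) \bmod r$ on the induced root shifts.

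The main obstacle I anticipate will be pinning down the ``shift at the root'' map $\delta\colon W(\wb{\mathcal{T}}_i) \to \IZ/r\IZ$ cleanly: one must verify that for any $g \in W(\wb{\mathcal{T}}_i)$ all its leaf shifts $(d_\ell)$ project to a common value modulo $r$, so that $\delta$ is a well-defined group homomorphism. This follows from the intra-subtree compatibility relations (applied to pairs of leaves whose common ancestor is the root of $\wb{\mathcal{T}}_i$), but care is required if the top branching of $\mathcal{T}$ falls into case~(2a) of axiom~(5), where an empty child sits alongside mandatory ones; in that case one checks that the empty child contributes a trivial Weyl factor and that $r$ remains a meaningful common invariant for the non-empty subtrees, with the corresponding constraint on their $\delta$'s still being compatible.
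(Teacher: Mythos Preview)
Your proposal is correct and follows essentially the same approach as the paper: the paper's own proof is extremely terse, simply citing \cite{doucotrembado2023topology} for the $\Aut(\cT)$ wreath-product description and noting that for $W(\cT)$ one must restrict to the subgroup cut out by the root-shift compatibility conditions. Your inductive argument (splitting leaf pairs into same-subtree versus different-subtree cases) is precisely the unpacking of that remark, and your anticipated obstacle about $\delta$ being well-defined is correctly resolved by the intra-subtree compatibility relations; note that since all maximal subtrees share the top branch vertex as their root, the ramification $r$ there is automatically common to all of them, so your worry about case~(2a) is not actually a difficulty.
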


\begin{proof} For the untwisted automorphism group the proof is exactly the same as the one in \cite{doucotrembado2023topology} for the twisted case. 
For the Weyl group, the proof is similar, the only difference is that the compatibility conditions for the root shifts $\delta(g_{i,k})$ imply that we must restrict to a subgroup of the wreath product 
$\prod_{i=1}^s \Sym_{n_i}\wr W(\wb{\mathcal{T}}_i)$ that we would get otherwise.
\end{proof}

The general picture we finally arrive at is the following: the full  local wild mapping class group $\fulllwmcg(\Theta)$ is an extension of the Weyl group $W(\cT)$ of the fission tree by the pure local wild mapping class group  $\purelwmcg(\Theta)$, i.e. we have a short exact sequence 
\beq
1\longrightarrow \purelwmcg(Q) \longrightarrow \fulllwmcg(\Th)\longrightarrow W(\cT) \longrightarrow 1
\eeq

\subsubsection{The case of one active circle}

In general, the exact sequence does not split, so it is not easy to get a fully explicit description of the \nonpure{} 
local wild mapping class group.
In the simple case of only one exponential factor however, we will now see it is possible to be more explicit and to determine completely 
the \nonpure{} local mapping class group:

\begin{thm}  Let $\Th=\<q\>$ be an irregular class with one circle  determined by the 
exponential factor $q$. 
Then the  full local wild mapping class group 
$\fulllwmcg(\Th)$  is isomorphic to $\mathbb{Z}^{|\Levels(q)|}$.
\end{thm}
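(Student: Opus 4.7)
The strategy is to exhibit $\fullconfig(\Th)$ as being homotopy equivalent to a real $m$-torus, so that $\fulllwmcg(\Th)\cong\IZ^m$ follows by taking $\pi_1$.

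By Proposition~\ref{prop: singcirc conf} we have $\pureconfig(q)\cong (\IC^*)^m\times\IC^N$, with the $(\IC^*)^m$ factor parameterising the coefficients of the $m$ levels of $q$ and $\IC^N$ those of the inconsequential admissible exponents. Writing $q=\sum a_j x^{j/r}$ with $r=\Ram(q)$, the Weyl group $W(\cT)\cong\IZ/r\IZ$ acts by Galois rotation: a generator sends $a_j\mapsto \om^j a_j$, where $\om=\exp(-2\pi\sqrt{-1}/r)$. In particular $W$ acts diagonally on the coefficient space and fixes the origin of $\IC^N$, so the projection $\pureconfig(q)\onto (\IC^*)^m$ collapsing the inconsequential coordinates is $W$-equivariant and a deformation retract. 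Passing to quotients yields a homotopy equivalence $\fullconfig(\Th)\simeq (\IC^*)^m/W$.

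I would then use polar coordinates to decompose $(\IC^*)^m\cong \IR_{>0}^m\times (S^1)^m$. Since the $W$-action is by multiplication by roots of unity on each $\IC^*$, it is trivial on the radial factor and acts by translations on the torus $T^m := (S^1)^m$. Theorem~\ref{thm: full lwmcg} (or equivalently a direct check, noting that the action on the $i$-th circle has order $d_i$, the denominator of the level $k_i$ in lowest terms, together with the condition $\lcm(d_1,\dots,d_m)=r$ from Proposition~\ref{prop: one circle classn}(b)) guarantees this translation action is free.

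Finally, any free translation action of a finite group $W$ on a torus $T^m$ has an $m$-torus as quotient: lifting to the universal cover $\IR^m\onto T^m$ enlarges the defining lattice $\IZ^m$ to a lattice $\La\subset\IR^m$ of index $|W|$, whence $T^m/W\cong\IR^m/\La\cong T^m$. This gives $\fullconfig(\Th)\simeq T^m$ and hence $\fulllwmcg(\Th)=\pi_1(\fullconfig(\Th))\cong\IZ^m$. The only delicate point is combining $W$-equivariance of the retract with the freeness of the induced translation action on $T^m$; both are transparent consequences of the Galois formula $a_j\mapsto\om^j a_j$ together with the combinatorial constraint $\lcm(d_1,\dots,d_m)=r$ on the denominators of the levels.
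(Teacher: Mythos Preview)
Your argument is correct and takes a genuinely different route from the paper's proof. The paper works directly with generators and relations for $\pi_1$: it introduces the loops $\gamma_1,\ldots,\gamma_m$ winding each level coefficient once around $0$, together with the Galois path $\nu$ from $q$ to $\sigma(q)$, and then computes the single relation $d_1\gamma_1+\cdots+d_m\gamma_m = r\nu$ (with $d_i=\gcd(n_i,r)$). Since $\gcd(r,n_1,\ldots,n_m)=1$, the Smith normal form reduces this presentation to $\IZ^m$. Your approach is more geometric: you identify the homotopy type of $\fullconfig(\Th)$ itself as an $m$-torus, via the $W$-equivariant retraction onto $(\IC^*)^m$ and the lattice argument for free translation quotients of tori. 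The key arithmetic input is the same in both proofs---your condition $\lcm(d_1,\ldots,d_m)=r$ (ensuring the translation action is faithful, hence free) is equivalent to the paper's $\gcd(r,n_1,\ldots,n_m)=1$. Your argument is cleaner and yields the stronger statement that $\fullconfig(\Th)\simeq T^m$; the paper's approach, on the other hand, gives explicit generators for $\fulllwmcg(\Th)$ and makes visible that the short exact sequence $0\to\IZ^m\to\IZ^m\to\IZ/r\IZ\to 0$ does not split.
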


\begin{proof}
Let $r=\ram(q)$ and $q_i=\si^i(q), i=0,\dots, r-1$ denote the Galois orbit of $q=\sum a_ix^{i/r}$. 
Let us write $L(q)=\Levels(q)=(k_1>\cdots>k_m)$ and $k_i=n_i/r$. 
Let us consider the loop 
$\gamma_i\colon[0,1]\to \pureconfig$, $i=1,\dots,m$, 
such that $\gamma_i$ makes the coefficient $a_{n_i}$ of $x^{k_i}$ 
go once around the origin, and leaves the other coefficient constant. 
Let us also consider the path $\nu$ in $\pureconfig$ 
such that 
\[
\nu(t)=
\sum_j a_j e^{-2\sqrt{-1}\pi jt/r} x^{j/r},
\]
so that $\nu(0)=q, \nu(1)=q_1$. 
Then $\fulllwmcg(q)$ is the abelian group generated by the homotopy classes determined by $\ga_1,\dots \ga_m$ and $\nu$, while the subgroup 
$\purelwmcg(q)$ is generated by $\ga_1,\dots \ga_m$. 
There are no relations between the generators  $\ga_1,\dots \ga_m$, 
which recovers the fact that 
$\purelwmcg(q)\cong \mathbb{Z}^{m}=\mathbb{Z}^{\abs{L(q)}}$.  
On the other hand the family  $\ga_1,\dots \ga_m,\nu$ is not free. Indeed, if we follow $r$ times the loop determined by $\nu$, 
it is the image of a loop upstairs, going from 
$q_0$ to itself, with the coefficient $a_{n_i}$ going around the origin a number of times equal to $\text{gcd}(n_i,r) =: d_i$. We thus have the following relation between the $m+1$ generators
\[
d_1 \ga_1 + \dots + d_m \ga_m= r \nu
\]
(we have used an additive notation here since the group is abelian). Using the fact that the greatest common divisor 
$\text{gcd}(r,n_1,\dots, n_m)=1$, the Schmidt algorithm used to classify finitely generated abelian groups transforms the vector 
$(d_1,\dots,d_m,-r)\in \mathbb{Z}^{m+1}$ 
corresponding to this relation into $(1,0,\dots,0)$, which implies 
that $\fulllwmcg(q)\cong \mathbb{Z}^{m}=\mathbb{Z}^{|L(q)|}$.
\end{proof}

In particular, the short exact sequence here reads
\[
0 \longrightarrow \Z^{|L(q)|}\longrightarrow \Z^{|L(q)|}\to \Z/r\Z\longrightarrow 0,
\]
and does not split.

\section{Outlook}

Several of the directions we plan to pursue are as follows:

1) Extend this work beyond type $A$ to any $G$: the notion of irregular class is already in \cite{twcv}, the analogue of fission trees for any $G$ in the untwisted case is in 
\cite{doucot2022local,doucotrembado2023topology}
and the definition of admissible deformations 
will again follow from 
that in the untwisted case \cite{gbs}. 
Presumably this will lead to other examples 
of (non-real) complex reflection braid groups.

2) Apply the fission trees to 
the Lax project \cite{hit70}, classifying the (wild) 
nonabelian Hodge spaces up to isomorphism/deformation.
For example how many distinct deformations classes 
are there in each complex dimension $2,4,6,\ldots$? 
Can the fission trees be ``combined'' with the diagrams of 
\cite{diags, doucot2021diagrams} (which are invariant under Fourier--Laplace) to give a refined invariant?
This encompasses the question of classifying isomonodromy systems, and the Painlev\'e equations are amongst the dimension $2$ examples.

3) Study further the full moduli spaces/stacks 
$\mathfrak{M}_{g,\mathbf{F}}$ 
of admissible deformations of any wild Riemann surface, whose fundamental groups are the wild mapping class groups (as in \cite{gbs}, \cite{p12} \S8),
generalising the Riemann moduli spaces 
$\mathfrak{M}_{g,m}$ and $\mathfrak{M}_{g,\{m\}}$ 
in the tame case (where $\{m\}$ means $m$ unordered marked points), as well as their universal covers (analogues of Teichm\"uller spaces). 
For example for $g=0$ and $ \mathbf{F}=[\cT]$ a single tree, 
this just amounts to quotienting the configuration space
$\fullconfig(\cT)$ by the two dimensional group 
of M\"obius transformation fixing one point of the Riemann sphere.
All the  Painlev\'e equations (in their standard Lax representations)
are especially nice since their (trace-free) 
moduli spaces $\mathfrak{M}_{g,\mathbf{F}}$
have dimension one, so are {\em wild modular curves}, reflecting the fact they are ODEs not PDEs
(their time variable 
ranges over a finite cover of this moduli space). 

4) Finally 
we are interested in quantising the symplectic/Poisson local systems of wild character varieties 
$\uMB \to \mathbb{B}\to \mathfrak{M}_{g,\mathbf{F}}$ (and the corresponding de Rham isomonodromy connections) 
to get linear representations of the wild mapping class groups
(see \cite{BabKit, bafi, jns08, yamakawa-qslims, rembado-slqc, rembado-syms, felder-rembado} for some examples).

\def\cprime{$'$} \def\cprime{$'$} \def\cprime{$'$} \def\cprime{$'$}
\providecommand{\bysame}{\leavevmode\hbox to3em{\hrulefill}\thinspace}
\providecommand{\MR}{\relax\ifhmode\unskip\space\fi MR }
\providecommand{\MRhref}[2]{%
  \href{http://www.ams.org/mathscinet-getitem?mr=#1}{#2}
}
\providecommand{\href}[2]{#2}

\vspace{0.2cm} 
\noindent
P.~B.:\  Université Paris Cité and Sorbonne Université, CNRS, IMJ-PRG, Paris, France.\\ 
 \url{https://webusers.imj-prg.fr/~philip.boalch/} \hfill \email{boalch@imj-prg.fr}

\vspace{0.1cm} 
\noindent 
J.~D.:\ 
Group of Mathematical Physics, Faculty of Sciences, Universidade de Lisboa, Campo Grande, Edificio C6, PT-1749-016 Lisboa, Portugal. \\
\url{https://www.normalesup.org/~doucot/}
\hfill 
\email{jmdoucot@fc.ul.pt}

\vspace{0.1cm} 
\noindent
G.~R.:\ Institut Montpelliérain Alexander Grothendieck (IMAG), University of Montpellier, Place Eugène Bataillon 34090 Montpellier. \\
\url{https://sites.google.com/view/gabrielerembado}
\hfill
\email{gabriele.rembado@umontpellier.fr}

\end{document}